\def\printnotation{{%
\def\indexname{Index of notation}
\begin{theindex}
\@input{\jobname.ntn}
\end{theindex}
}}
\setlist{noitemsep}
\newcommand{\arxiv}[1]{\href{http://arxiv.org/abs/#1}{\tt arXiv:\nolinkurl{#1}}}
\newtheorem{theorem}{Theorem}[section]
\newtheorem{lemma}[theorem]{Lemma}
\newtheorem{conjecture}[theorem]{Conjecture}
\newtheorem{proposition}[theorem]{Proposition}
\theoremstyle{definition}
\newtheorem{definition-proposition}[theorem]{Definition-Proposition}
\newtheorem{definition}[theorem]{Definition}
\newtheorem{notation}[theorem]{Notation}
\newtheorem{example}[theorem]{Example}
\newtheorem{remark}[theorem]{Remark}
\newtheorem{question}[theorem]{Question}
\newcommand{\LFP}{\operatorname{\mathbf{Pr}}}
\newcommand{\mop}{\otimes\mathrm{op}}
\newcommand{\bop}{\sigma\mathrm{op}}
\newcommand{\op}{\mathrm{op}}
\newcommand{\Vect}{\operatorname{Vect}}
\newcommand{\Dr}{\operatorname{Dr}}
\newcommand{\End}{\operatorname{End}}
\newcommand{\forget}{\operatorname{forget}}
\newcommand{\free}{\operatorname{free}}
\newcommand{\HC}{\operatorname{HC}}
\newcommand{\Hom}{\operatorname{Hom}}
\newcommand{\Ind}{\text{Ind}\,}
\newcommand{\LMod}{\operatorname{LMod}}
\newcommand{\Mod}{\operatorname{Mod}}
\newcommand{\Rep}{\operatorname{Rep\,}}
\newcommand{\triv}{\mathrm{triv}}
\newcommand{\tens}{\mathrm{tens}}
\newcommand{\U}{\mathrm{U}}
\newcommand{\cA}{\mathcal{A}}
\newcommand{\cB}{\mathcal{B}}
\newcommand{\cC}{\mathcal{C}}
\newcommand{\cD}{\mathcal{D}}
\newcommand{\cF}{\mathcal{F}}
\newcommand{\cM}{\mathcal{M}}
\newcommand{\cT}{\mathcal{T}}
\newcommand{\cN}{\mathcal{N}}
\newcommand{\cS}{\mathcal{S}}
\newcommand{\QCoh}{\operatorname{QCoh}}
\newcommand{\Fun}{\operatorname{\operatorname{Fun}}}
\newcommand{\cE}{\mathcal{E}}
\newcommand{\bt}{\boxtimes}
\newcommand{\id}{\operatorname{id}}
\newcommand{\Emb}{\operatorname{Emb}}
\newcommand{\RR}{\mathbb{R}}
\newcommand{\rZ}{\mathrm{Z}}
\newcommand{\coev}{\operatorname{coev}}
\newcommand{\ev}{\operatorname{ev}}
\newcommand{\Tens}{\operatorname{\mathbf{Tens}}}
\newcommand{\BrTens}{\operatorname{\mathbf{BrTens}}}
\newcommand{\BrFus}{\operatorname{\mathbf{BrFus}}}
\newcommand{\Fus}{\operatorname{\mathbf{Fus}}}
\newcommand{\alg}{\operatorname{Alg}}
\newcommand{\un}{\mathbf{1}}
\newcommand{\Disk}{\operatorname{Disk}}	
\newcommand{\Mfld}{\operatorname{Mfld}}
\newcommand{\Pic}{\operatorname{Pic}}
\newcommand{\hPic}{\underline{\operatorname{Pic}}}
\newcommand{\defterm}[1]{\textbf{\emph{#1}}}
\def\HH{\hbox{${\mathcal H}$\kern-5.2pt${\mathcal H}$}}
\begin{document}
\title{Invertible braided tensor categories}
\author{Adrien Brochier, David Jordan, Pavel Safronov, Noah Snyder}

\maketitle
\begin{abstract}
We prove that a finite braided tensor category $\cA$ is invertible in the Morita $4$-category $\BrTens$ of braided tensor categories if, and only if, it is non-degenerate.  This includes the case of semisimple modular tensor categories, but also non-semisimple examples such as categories of representations of the small quantum group at good roots of unity.  Via the cobordism hypothesis, we obtain new invertible $4$-dimensional framed topological field theories, which we regard as a non-semisimple framed version of the Crane-Yetter-Kauffman invariants, after Freed--Teleman and Walker's construction in the semisimple case.  More generally, we characterize invertibility for $E_1$- and $E_2$-algebras in an arbitrary symmetric monoidal $\infty$-category, and we conjecture a similar characterization of invertible $E_n$-algebras for any $n$.  Finally, we propose the Picard group of $\BrTens$ as a generalization of the Witt group of non-degenerate braided fusion categories, and pose a number of open questions about it.
\end{abstract}
\setcounter{tocdepth}{2}
\tableofcontents

\section{Introduction}

In the paper \cite{Brochier2018}, we introduced a symmetric monoidal 4-category $\BrTens$ whose objects are braided tensor categories, and whose morphisms encode their higher Morita theory, following \cite{Haugseng2017, Johnson-Freyd2017}, and gave sufficient conditions for 3-dualizability (``cp-rigidity'') and 4-dualizability (fusion) in $\BrTens$.  In this paper we consider the related question of invertibility in $\BrTens$.  We also treat both dualizability and invertibility in the more general setting of $E_2$-algebras in an arbitrary background symmetric monoidal 2-category $\cS$.

\subsection{Main results}

Finite braided tensor categories are linear and abelian braided monoidal categories satisfying strong finiteness and rigidity conditions (see \cref{sec:tensor} for more details).  Such an $\cA$ is called \defterm{non-degenerate} if the M\"uger center of $\cA$ is trivial, i.e. if for every non-trivial object $X\in\cA$, there exists an object $Y\in\cA$ such that the double-braiding $\sigma_{Y,X}\circ \sigma_{X,Y}$ on $X\otimes Y$ is not the identity.  The main result of this paper is:

\begin{theorem}\label{thm:intro-main-result}
A finite braided tensor category $\cA$ is an invertible object of $\BrTens$ if, and only if, $\cA$ is non-degenerate.
\end{theorem}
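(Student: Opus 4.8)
The plan is to test invertibility against the candidate inverse $\overline{\cA}$, the braided tensor category with the same underlying tensor structure but reversed braiding. Since $\BrTens$ is symmetric monoidal and $\cA$ is dualizable by the cp-rigidity results of \cite{Brochier2018}, the abstract criterion for invertibility reduces to showing that the evaluation and coevaluation $1$-morphisms witnessing $\overline{\cA}\simeq\cA^\vee$ are genuinely \emph{invertible} $1$-morphisms of $\BrTens$ --- that is, that they implement a Morita equivalence rather than merely a duality. First I would make these evaluation and coevaluation morphisms explicit: both are given by $\cA$ itself, regarded as an algebra object (equivalently, a module category) equipped with its canonical central structure over $\cA\boxtimes\overline{\cA}$.

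The central reduction is then to identify the obstruction to invertibility with a single concrete functor. Composing coevaluation with evaluation computes a relative tensor product in $\BrTens$, and the key input from higher Morita theory and factorization homology is that this composite is governed by the canonical braided tensor functor
\[
G\colon \cA\boxtimes\overline{\cA}\longrightarrow \cZ(\cA),\qquad X\boxtimes Y\longmapsto X\otimes Y,
\]
equipped with its tautological half-braiding, where $\cZ(\cA)$ is the Drinfeld center (appearing as the $\End$-category of the unit, equivalently the $S^1$-trace, in the Morita category). I would show that the evaluation and coevaluation $1$-morphisms are invertible precisely when $G$ is an equivalence of braided tensor categories. Granting this, the theorem becomes the purely tensor-categorical assertion that $G$ is an equivalence if and only if $\cA$ is non-degenerate.

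To prove that last assertion I would argue in both directions. If $\cA$ is degenerate, a non-trivial transparent object $X$, one with $\sigma_{Y,X}\circ\sigma_{X,Y}=\id$ for all $Y$, produces an object that $G$ sends into the image of the unit, forcing $G$ to fail full faithfulness; hence $G$ is not an equivalence. Conversely, assume non-degeneracy. A direct computation of $\Hom$-spaces shows that non-degeneracy of the double-braiding pairing makes $G$ fully faithful. To upgrade full faithfulness to an equivalence in the finite, possibly non-semisimple setting, I would compare Frobenius--Perron dimensions: for any finite tensor category one has $\operatorname{FPdim}\cZ(\cA)=(\operatorname{FPdim}\cA)^2=\operatorname{FPdim}(\cA\boxtimes\overline{\cA})$, and a fully faithful tensor functor between finite tensor categories of equal Frobenius--Perron dimension is automatically an equivalence. (In the semisimple case this recovers M\"uger's theorem; the finite case additionally invokes Shimizu's characterizations of non-degeneracy together with the standard dimension count for centers.)

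I expect the main obstacle to be the first, Morita-theoretic reduction rather than the tensor-categorical endgame: one must compute the relevant relative tensor products and internal $\Hom$s in $\BrTens$ and verify that \emph{all} the higher coherence data --- not merely the underlying $1$-categorical functor $G$ --- assemble into genuine inverse equivalences, so that invertibility of the $1$-morphisms is witnessed at every level of the $4$-category. Without semisimplicity one cannot decompose module categories into simples, so this step relies essentially on the exactness and finiteness established in \cite{Brochier2018}, together with the principle that an equivalence of underlying categories compatible with the central/module structures is automatically an equivalence of the associated $1$-morphisms. Carrying out this coherent upgrade carefully is the crux of the argument.
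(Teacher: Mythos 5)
The first reduction---that $\cA$ is invertible iff the evaluation $1$-morphism $\ev\colon\cA\bt\cA^{\bop}\to\Vect$ is itself an invertible $1$-morphism of $\BrTens$---is fine. The gap is in the very next step, where you identify invertibility of $\ev$ with the single condition that $G\colon\cA\bt\cA^{\bop}\to\rZ_1(\cA)$ is an equivalence. Invertibility of a $1$-morphism in a $4$-category unwinds (via \cref{lm:invertibility}, applied twice: right-adjointability plus invertibility of the unit and counit at each level) into \emph{four} conditions, of which ``$G$ is an equivalence'' (factorizability) is only one; see \cref{thm:E2invertiblemorphism}. The others amount to non-degeneracy of the M\"uger center ($\Vect\to\rZ_2(\cA)$ an equivalence) and cofactorizability ($\HC(\cA)\to\End(\cA)$ an equivalence). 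This is exactly the Azumaya phenomenon one categorical level down: an $R$-algebra $A$ with $R\xrightarrow{\sim}\rZ(A)$ need not be invertible in $\alg_1(\Mod_R)$---one also needs $A\otimes A^{\op}\to\Hom_R(A,A)$ to be an isomorphism (\cref{ex:Azumaya}). So your argument, carried out as written, establishes only that non-degeneracy implies factorizability, which is Shimizu's theorem and not yet invertibility.

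The paper's route supplies the missing pieces: a general criterion (\cref{thm:E2invertible}) saying a $3$-dualizable $E_2$-algebra is invertible iff it is non-degenerate, factorizable, \emph{and} cofactorizable; Shimizu's theorem and a result of Farsad--Gainutdinov--Runkel for the equivalence of non-degeneracy, factorizability, and non-degeneracy of the Hopf pairing $\omega$ on the canonical coend $\cF$; and a new argument (\cref{prop:Drinfeldcofactorizable}) identifying cofactorizability with invertibility of the Drinfeld map $\Dr\colon\cF\to\cE$ from the canonical coend to the canonical end. This last step is the part your proposal omits entirely, and it is not formal: although $\HC(\cA)$ and $\rZ_1(\cA)$ agree as plain categories for cp-rigid $\cA$, they carry different monoidal structures and act on $\cA$ differently, so cofactorizability is a genuinely separate condition that must be verified, not a consequence of $G$ being an equivalence by abstract nonsense. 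Your ``coherent upgrade'' paragraph gestures at the right worry but locates it in the wrong place: the issue is not upgrading an equivalence of underlying categories to one of $1$-morphisms, but the fact that an equivalence at the level of the central-structure map does not by itself make the unit and counit $2$-morphisms of the adjunction invertible.
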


We note that \cref{thm:intro-main-result} includes modular tensor categories, whose invertibility is known to experts through unpublished theorems of Freed--Teleman and Walker (c.f. \cite{Freed2012a,Freed2012b} and \cite{Walker}), but we emphasize that we require neither semisimplicity nor a ribbon structure.  In particular, our results include representation categories of small quantum groups for good roots of unity, which are non-degenerate but not semisimple.

\cref{thm:intro-main-result} has an application to the construction of topological field theories via the cobordism hypothesis \cite{Baez1995, Lurie2009, MR2742424, MR2994995, Ayala2017a}.  Namely, we obtain a $4$-dimensional fully extended framed TFT attached to every non-degenerate finite braided tensor category, which we may regard as a framed and non-semisimple analog of the fully extended Crane-Yetter-Kauffman \cite{Crane1993,Crane1997,Walker2011, Barenz2018,Kirillov2020} topological field theory envisioned by Freed--Teleman and Walker.  These TFTs are invertible in the sense of \cite{MR3330283, SchommerPriesInvertible, debray2019lectures}.  We believe that this aspect of the work will be important for applications, as it was not generally expected that \emph{non-semisimple} braided tensor categories would give rise to $4$-dimensional TFTs due to the heavy reliance on semisimplicity in the traditional state-sum approach to Crane-Yetter-Kauffman TFT's. In future work we plan to study $SO(4)$-fixed point structures (hence the associated oriented TFT's), and to give a reformulation of non-semisimple Witten-Reshetikhin-Turaev theories of \cite{Costantino2014, renzi20193dimensional} as field theories relative to our non-semisimple Crane-Yetter-Kauffman theory, following the proposal of Freed--Teleman and Walker.

\cref{thm:intro-main-result} also gives rise to a generalization of the Witt group of non-degenerate braided fusion categories from \cite{MR3039775}, via the Picard group of $\BrTens$.  In \cref{sec:Witt}, we discuss this Picard group, its relation to the Witt group, and a number of natural open questions related to it.

Braided tensor categories define $E_2$-algebras in the symmetric monoidal $2$-category $\cS=\LFP$ of locally presentable linear categories.  Our approach in this paper is to work as much as possible in the more general $(\infty,4)$-category $\alg_2(\cS)$ of $E_2$-algebras in an arbitrary ambient closed symmetric monoidal $(\infty,2)$-category $\cS$.

In this generality we have analogs $\rZ_0(\cA)$, $\rZ_1(\cA)$, $\rZ_2(\cA)$, respectively, of the endo-functors, the Drinfeld center, and the M\"uger center of $\cA$ (see \cref{sect:Enalgebras} for detailed definitions).  We will also use the Harish-Chandra category $\HC(\cA)$ (a variant of the monoidal Hochschild homology with the annulus/bounding framing rather than cylinder/product framing), and we will denote by $\cA^{\mop}$ and $\cA^{\bop}$ the $E_2$-algebras obtained as reflections of the $E_2$-structure through the $x$- and $y$-axes (see \cref{sect:Enalgebras}). We first prove the following result:

\begin{theorem} \label{thm:Characterize3Dualizability} An $E_2$-algebra $\cA\in\alg_2(\cS)$ is 3-dualizable if, and only if, $\cA$ is dualizable as an object of $\cS$, as an $\cA^e$-module, and as an $\HC(\cA)$-module. 
\end{theorem}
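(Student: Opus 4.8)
The plan is to establish 3-dualizability by a level-by-level analysis inside the iterated Morita presentation $\alg_2(\cS)\simeq\alg_1(\alg_1(\cS))$, using as the central engine the principle that a Morita $1$-morphism admits a one-sided adjoint precisely when its underlying (bi)module is dualizable, and that the evaluation and coevaluation morphisms attached to an $E_2$-algebra are assembled out of $\cA$ itself, viewed as a module over its successive enveloping objects. Because an $E_2$-algebra carries enough multiplicative structure, I expect $\cA$ to be automatically $1$-dualizable in $\alg_2(\cS)$, with an explicit dual given by a reflection of the $E_2$-structure (such as $\cA^{\bop}$) and with explicit $\ev$ and $\coev$ $1$-morphisms determined by $\cA$. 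Three-dualizability then reduces to the finite tower of adjointability conditions: $\ev$ and $\coev$ must admit adjoints, and the unit and counit $2$-morphisms of those adjunctions must again admit adjoints. The whole argument is an equivalence at each stage, so both implications of the theorem will follow at once — necessity of each condition by applying an appropriate underlying/forgetful functor, sufficiency by assembling the adjoints layer by layer.

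First I would unwind the layers already visible at the $E_1$ level, i.e.\ those controlled by the dualizability of the underlying $E_1$-algebra of $\cA$ inside $\cD:=\alg_1(\cS)$. Here the Lurie-type ``smooth and proper'' philosophy applies: the existence of the relevant adjoints of $\ev$ and $\coev$ is governed, through the module-dualizability principle, by $\cA$ being dualizable as an object of $\cS$ (the properness condition, at the $\rZ_0$-layer) and by $\cA$ being dualizable as a module over its $E_1$-enveloping object $\cA^e=\cA\otimes\cA^{\mop}$ (the smoothness condition, accounting for the Drinfeld-center layer $\rZ_1(\cA)$). These yield the first two of the three conditions.

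The genuinely $E_2$-theoretic and most delicate layer is the last one, where the remaining adjointability data for the units and counits is controlled by $\cA$ regarded as a module over the Harish-Chandra category $\HC(\cA)$. The crucial subtlety is the framing: the $E_2$-direction contributes to the ambient bordism through the annulus/bounding framing rather than the product/cylinder framing, so the correct enveloping object at this stage is $\HC(\cA)$ and not a naive product-framed center. Verifying that the top-level adjunction units and counits are faithfully detected by $\HC(\cA)$-module dualizability, while correctly tracking the reflected decorations $\cA^{\mop}$ and $\cA^{\bop}$ that enter $\ev$ and $\coev$, is where I expect the real work to lie.

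The main obstacle, then, is the bookkeeping required to show that this tower terminates \emph{exactly} at the three stated module-dualizabilities, with no further hidden condition at the top $E_2$-layer. The difficulty is compounded by the interaction of the two Morita directions with the fact that $\cS$ is itself an $(\infty,2)$-category, so that ``$k$-dualizability in $\alg_1(\cD)$'' unwinds into mixed conditions in $\cD$. I would handle this by reducing to the $E_1$-case twice along $\alg_2(\cS)\simeq\alg_1(\alg_1(\cS))$ and carefully propagating the framings introduced by the braiding, so that the two smoothness conditions separate cleanly into the $\cA^e$- and $\HC(\cA)$-module statements and the base properness condition over $\cS$ is isolated.
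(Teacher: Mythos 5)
Your overall skeleton matches the paper's argument: establish the dualizability data for $\cA$ in $\alg_2(\cS)$, reduce $3$-dualizability to a finite list of adjointability conditions on the (co)evaluation adjunctions, and convert each adjointability statement into a one-sided module dualizability statement via Lurie's criterion for bimodules. However, there is a concrete misplacement in your accounting of \emph{where} the three conditions enter, and it reflects a confusion about the structure of $\alg_2(\cS)$. You assert that ``the existence of the relevant adjoints of $\ev$ and $\coev$ is governed \ldots by $\cA$ being dualizable as an object of $\cS$ \ldots and as a module over $\cA^e$.'' This is false: $\ev$ and $\coev$ are $1$-morphisms of $\alg_2(\cS)$, i.e.\ central algebras rather than bimodules, and every such $1$-morphism has a right (and left) adjoint unconditionally, given by the multiplication-opposite algebra $\cA^{\mop}$ equipped with a rotated central structure. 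The bimodule-dualizability criterion you invoke applies one categorical level up, to the $2$-morphisms of $\alg_2(\cS)$. Consequently every $E_2$-algebra is automatically $2$-dualizable, and \emph{all three} conditions of the theorem arise at the top level, from the right-adjointability of the four unit/counit $2$-morphisms of the adjunctions $\ev\dashv\ev^R$ and $\coev\dashv\coev^R$: the counit of the $\ev$-adjunction gives dualizability of $\cA$ in $\cS$, the unit of the $\ev$-adjunction gives dualizability over $\cA^e$, the unit of the $\coev$-adjunction gives dualizability over $\HC(\cA)$, and the counit of the $\coev$-adjunction gives dualizability over $\cA\bt\cA$ --- which must then be identified with the $\cA^e$-condition using the braiding to identify $\cA\bt\cA\cong\cA\bt\cA^{\mop}$ as monoidal objects acting on $\cA$.

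This matters for exactly the ``bookkeeping'' you flag as the main obstacle: under your accounting you would expect a single condition at the unit/counit stage and would have no mechanism for discharging the fourth adjointability statement (the counit of the coevaluation adjunction), which is the one place in the paper's proof where a nontrivial identification via the braiding is needed to collapse four conditions to three. You should also separate cleanly the two adjointability principles in play: for $2$-morphisms of $\alg_2(\cS)$ one first forgets the central structure (adjoints in $\alg_2(\cS)$ exist if and only if they exist for the underlying bimodules in $\alg_1(\cS)$, and the central structure on the adjoint comes for free), and only then applies the statement that a bimodule has a right adjoint if and only if it is dualizable as a right module. With these two corrections your plan becomes the paper's proof.
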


Turning next to invertibility we prove:

\begin{theorem}\label{thm:intro-main-result-E1}
A 2-dualizable $E_1$-algebra $\cC\in\alg_1(\cS)$ is invertible if, and only if:
\begin{enumerate}
\item It is \defterm{central}: the natural morphism $\un_\cS\to\rZ_1(\cC)$ is an equivalence.
\item It is \defterm{Azumaya}: the natural morphism $\cC\bt \cC^{\mop}\rightarrow \End(\cC)$ is an equivalence.
\end{enumerate}
\end{theorem}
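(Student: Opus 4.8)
The plan is to reduce invertibility of $\cC$ to the invertibility of a single evaluation $1$-morphism, and then to recognize the centrality and Azumaya conditions as exactly the data witnessing that invertibility. I would start from the general principle, valid in any symmetric monoidal $(\infty,n)$-category, that an object $X$ with dual $X^\vee$ is invertible if and only if the evaluation $\ev_X\colon X\otimes X^\vee\to\un$ is an equivalence; the coevaluation is then automatically an equivalence, and the symmetry of the monoidal structure upgrades the resulting one-sided inverse to a genuine two-sided one. In $\alg_1(\cS)$ every object is $1$-dualizable with dual its opposite algebra, so $\cC^\vee\simeq\cC^{\mop}$, and the evaluation $\ev_\cC\colon\cC\bt\cC^{\mop}\to\un_\cS$ is the bimodule given by $\cC$ itself, viewed as a module over its enveloping algebra $\cC^e=\cC\bt\cC^{\mop}$. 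The first step is therefore to establish that $\cC$ is invertible in $\alg_1(\cS)$ exactly when this bimodule is an invertible $1$-morphism, i.e. exhibits a Morita equivalence between $\cC^e$ and $\un_\cS$.

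The second step is to unwind invertibility of the bimodule $\cC$ one categorical level down, following the classical double-centralizer analysis of Azumaya algebras. A bimodule ${}_{\cC^e}\cC_{\un_\cS}$ is a Morita equivalence precisely when $\cC$ is dualizable both as a $\un_\cS$-module and as a $\cC^e$-module and the two double-centralizer maps
\[
\cC\bt\cC^{\mop}=\cC^e\longrightarrow \End_{\un_\cS}(\cC)=\End(\cC),\qquad
\un_\cS\longrightarrow \End_{\cC^e}(\cC)=\rZ_1(\cC)
\]
are equivalences. The first map is the enveloping-algebra action and its being an equivalence is the Azumaya condition; the second is the unit of the center and its being an equivalence is the centrality condition (no $(-)^{\op}$ appears since $\un_\cS^{\op}\simeq\un_\cS$). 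Here I rely on the identifications $\rZ_1(\cC)\simeq\End_{\cC^e}(\cC)$ and $\End(\cC)\simeq\End_{\un_\cS}(\cC)$ from \cref{sect:Enalgebras} together with the closedness of $\cS$. I would emphasize that, unlike the affine derived setting where one condition may subsume the other, in the general $(\infty,2)$-categorical $\cS$ both double-centralizer maps must be imposed, which is precisely why the theorem lists centrality and the Azumaya property separately.

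It remains to supply the dualizability hypotheses. In the forward direction no extra input is needed: an invertible object is automatically fully dualizable, so the required dualizability of $\cC$ over $\un_\cS$ and over $\cC^e$ is free, and one simply reads off centrality and the Azumaya property from the criterion above. In the reverse direction this is exactly where the standing hypothesis of $2$-dualizability enters: I would verify that $2$-dualizability of $\cC$ delivers the dualizability of $\cC$ as a $\un_\cS$-module and as a $\cC^e$-module, after which centrality and the Azumaya property provide the two equivalences in the display, hence the Morita equivalence and therefore invertibility.

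The main obstacle I anticipate is the second step: making the Morita-equivalence criterion precise in the $(\infty,2)$-categorical generality of $\cS$ and checking, with all higher coherences, that the abstract duality data exhibiting the bimodule $\cC$ as invertible are genuinely computed by the two natural maps named in the theorem. In particular, the delicate point is to confirm that $2$-dualizability supplies neither more nor less than dualizability of $\cC$ over $\un_\cS$ and over $\cC^e$, so that the entire residual content of invertibility is captured by centrality and the Azumaya property and nothing is left unaccounted for.
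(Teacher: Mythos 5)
Your proposal is correct and follows essentially the same route as the paper: the paper views $\cC$ as a $1$-morphism $\un_\cS\to\un_\cS$ in $\alg_2(\cS)$, applies its general criterion for invertible $1$-morphisms (\cref{thm:E2invertiblemorphism}, itself an iteration of \cref{lm:invertibility} together with \cref{prop:E1Hom1adjoints}) to obtain four conditions, and then pairs them into the centrality and Azumaya conditions by dualizing — exactly your ``evaluation bimodule is a Morita equivalence'' analysis. The one step you flag as the remaining obstacle, namely that the counit conditions $\cC^*\bt_{\cC^e}\cC\to\un_\cS$ and $\cC^!\bt\cC\to\cC^e$ are equivalent to the two double-centralizer maps being equivalences, is precisely what the paper supplies via an explicit calculus-of-mates computation using dualizability of $\cC$ over $\un_\cS$ and over $\cC^e$ (cf.\ \cref{rem:mates}), so your outline matches the written proof once that dualization is carried out.
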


In the case $\cS$ is the category of $R$-modules for $R$ a commutative ring, \cref{thm:intro-main-result-E1} reduces to a characterization of Azumaya algebras, see \cref{ex:Azumaya}.  Our main general result is the following characterization of invertible $E_2$-algebras:

\begin{theorem}\label{thm:intro-main-result-general} A 3-dualizable $E_2$-algebra $\cA\in \alg_2(\cS)$ is invertible if, and only if:
\begin{enumerate}
\item It is \defterm{non-degenerate}: the natural morphism $\un_\cS\to \rZ_2(\cA)$ is an equivalence.
\item It is \defterm{factorizable}: the natural morphism $\cA\bt \cA^{\bop}\to \rZ_1(\cA)$
is an equivalence.
\item It is \defterm{cofactorizable}:  the natural morphism $\HC(\cA) \to \End(\cA)$ is an equivalence.
\end{enumerate}
\end{theorem}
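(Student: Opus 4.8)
The plan is to deduce invertibility from dualizability by upgrading, one categorical level at a time, each ``admits an adjoint'' into ``is an equivalence''. Recall the iterated criterion for invertibility in a symmetric monoidal $(\infty,k)$-category: an object is invertible if and only if it is dualizable and its evaluation and coevaluation are invertible $1$-morphisms, and, inductively, a $j$-morphism is invertible if and only if it admits an adjoint whose unit and counit are invertible $(j+1)$-morphisms. Working in the $(\infty,4)$-category $\alg_2(\cS)$, this unwinds the invertibility of $\cA$ into equivalence checks at successive morphism levels. The adjunctions needed even to state these checks are precisely the three layers of dualizability recorded in \cref{thm:Characterize3Dualizability} --- dualizability in $\cS$, as an $\cA^e$-module, and as an $\HC(\cA)$-module --- which we are given. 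So the first step is to show that, granting $3$-dualizability, the remaining checks organize into the invertibility of exactly three distinguished comparison maps, arising as the units and counits of these three adjunctions and living at the $1$-, $2$- and $3$-morphism levels; the higher coherence demanded by full invertibility is then bootstrapped from these three equivalences, since an invertible object is automatically fully dualizable.

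The second and principal step is to identify these three comparison maps with the maps named in (1)--(3). I would carry this out exactly as in the proof of \cref{thm:Characterize3Dualizability}, but tracking the morphisms themselves rather than merely their existence, and I expect the result to respect categorical depth and to mirror the $E_1$ case of \cref{thm:intro-main-result-E1}. At the outermost ($1$-morphism) level the relevant enveloping object is the \emph{annulus}/trace construction, so the comparison is cofactorizability $\HC(\cA)\to\End(\cA)$; this is the $E_2$ analog of the Azumaya map $\cC\bt\cC^{\mop}\to\End(\cC)$, with $\HC(\cA)$ replacing the naive enveloping algebra precisely because the relevant gluing in the $E_2$-Morita category produces the annulus rather than the \emph{cylinder}. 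At the middle ($2$-morphism) level the governing object is the Drinfeld-center-type $\rZ_1(\cA)$, so the comparison is factorizability $\cA\bt\cA^{\bop}\to\rZ_1(\cA)$, the braiding supplying the comparison functor and thereby forcing the reversed-braiding reflection $\cA^{\bop}$. At the innermost ($3$-morphism) level the comparison is an endomorphism measured by the $E_2$-center, whose invertibility is non-degeneracy $\un_\cS\to\rZ_2(\cA)$, the analog of centrality $\un_\cS\to\rZ_1(\cC)$. Matching each level to its condition yields that $\cA$ is invertible if and only if (1), (2) and (3) all hold.

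The hardest part, I anticipate, is this identification: computing the three units and counits explicitly and recognizing each as the stated natural map. This rests on the factorization-homology description of $\alg_2(\cS)$ and on careful bookkeeping of framings and of the two reflections $\cA^{\mop}$ and $\cA^{\bop}$, since an orientation slip would produce the wrong variance and break the comparison with $\rZ_1(\cA)$ and $\rZ_2(\cA)$. A related subtlety is the \emph{collapse} of the a priori deeply nested invertibility criterion to three flat conditions: one must verify that, once the lower equivalences hold, the surviving units and counits reduce at each level to a single comparison map, using the symmetry of $\bt$ to identify left and right duals together with their evaluations and coevaluations. I would isolate the general iterated-invertibility criterion and this collapse as a lemma about symmetric monoidal $(\infty,k)$-categories, and then feed in the three explicit computations above to conclude.
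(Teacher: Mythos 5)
Your overall strategy is the paper's: iterate the criterion ``invertible $=$ right-adjointable with invertible unit and counit'' (\cref{lm:invertibility}) down through $\alg_2(\cS)$, starting from the observation that $\cA$ is invertible iff the evaluation $1$-morphism $\ev\colon\cA\bt\cA^{\bop}\to\un_\cS$ is invertible, and then identify the resulting units and counits using the explicit dualizability data of \cref{thm:2dualizabilityE2} and \cref{thm:E2Hom1adjoints}. The paper packages the middle step as a standalone characterization of invertible $1$-morphisms $\cC\colon\cB\to\un_\cS$ (\cref{thm:E2invertiblemorphism}) and then specializes to $\cB=\cA\bt\cA^{\bop}$, $\cC=\cA$.

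There is, however, a concrete inaccuracy in how you organize the conditions, and it matters for executing the identification step. The unwinding does \emph{not} produce ``three comparison maps at the $1$-, $2$- and $3$-morphism levels.'' It produces \emph{four} maps, all living at the same innermost level: the unit $\eta$ and counit $\epsilon$ of $\ev\dashv\ev^R$ are $2$-morphisms, each must be right-adjointable with invertible unit and counit, and those four units and counits are $3$-morphisms of $\alg_2(\cS)$, i.e.\ plain morphisms in $\cS$ --- concretely $\cA^!\bt_{\cA\bt\cA^{\bop}}\cA\to\cA^e$, \ $\cA\bt\cA^{\bop}\to\rZ_1(\cA)$, \ $\cA^*\bt_{\HC(\cA)}\cA\to\un_\cS$, and $\HC(\cA)\to\End(\cA)$. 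The collapse from four conditions to three is then \emph{not} an application of the symmetry of $\bt$ identifying left and right duals; it is the calculus of mates for dualizable modules (\cref{rem:mates}): since $\cA$ is dualizable over $\cA^e$, the first map is an equivalence iff its $\cA^e$-linear dual --- which is the second map --- is an equivalence, so the two merge into factorizability; and since $\cA$ is dualizable over $\HC(\cA)$, the third map is an equivalence iff its dual $\un_\cS\to\Hom_{\HC(\cA)}(\cA,\cA)=\rZ_2(\cA)$ is, giving non-degeneracy. This is exactly where the three dualizability hypotheses of \cref{thm:Characterize3Dualizability} are used beyond merely guaranteeing that the adjoints exist. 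Without the mate argument your level-by-level bookkeeping does not close up: you would be left with four independent conditions, and the assignment of cofactorizability, factorizability and non-degeneracy to the $1$-, $2$- and $3$-morphism levels respectively has no counterpart in the actual computation.
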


Our proof of \cref{thm:intro-main-result} is an application of \cref{thm:intro-main-result-general} in the case $\BrTens=\alg_2(\LFP)$.  For this we require two additional claims: that finite braided tensor categories are $3$-dualizable, and that for finite braided tensor categories, non-degeneracy, factorizability and cofactorizability are mutually equivalent.  The first claim follows from the main result of \cite{Brochier2018} since finite tensor categories are cp-rigid.  For the second claim, we have the following two theorems, the second of which we prove in \cref{sec:tensor} (see \cref{thm:body-main-result} for the complete statement).

\begin{theorem}[\cite{ShimizuND}]\label{thm:Shimizu} A finite braided tensor category is factorizable if and only if it is non-degenerate.\end{theorem}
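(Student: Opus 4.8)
The plan is to study the canonical braided tensor functor
\[
F\colon \cA\bt\cA^{\bop}\longrightarrow \rZ_1(\cA),\qquad X\bt Y\longmapsto X\ot Y,
\]
whose half-braiding on $X\ot Y$ is assembled from the braiding $\sigma$ of $\cA$ on the first factor and from the reversed braiding on the second; factorizability is exactly the assertion that $F$ is an equivalence. At the outset I would record the two partial central embeddings $G^{+},G^{-}\colon\cA\to\rZ_1(\cA)$ obtained by restricting $F$ to $\cA\bt\un$ and to $\un\bt\cA$, namely $G^{+}(X)=(X,\sigma_{X,-})$ and $G^{-}(X)=(X,\sigma_{-,X}^{-1})$. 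Both implications will be read off from the interaction of $F$ with these embeddings and with Frobenius--Perron dimension.

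For the easy direction (factorizable $\Rightarrow$ non-degenerate) I would argue by contraposition. If $X$ is a nontrivial object of the M\"uger center $\rZ_2(\cA)$, then $\sigma_{Y,X}\circ\sigma_{X,Y}=\id$ for all $Y$, which says precisely that the two half-braidings $\sigma_{X,-}$ and $\sigma_{-,X}^{-1}$ coincide; hence $G^{+}(X)\cong G^{-}(X)$ in $\rZ_1(\cA)$. Thus the objects $X\bt\un$ and $\un\bt X$ of $\cA\bt\cA^{\bop}$, which are non-isomorphic because $X\not\cong\un$, have isomorphic images under $F$. Since an equivalence reflects isomorphisms, $F$ cannot be an equivalence, i.e. $\cA$ is not factorizable.

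For the hard direction (non-degenerate $\Rightarrow$ factorizable) I would reduce to a dimension count. Both categories are finite and $F$ is an exact braided tensor functor admitting two-sided adjoints, so it suffices to prove $F$ is fully faithful: combined with the equalities $\operatorname{FPdim}(\cA\bt\cA^{\bop})=\operatorname{FPdim}(\cA)^2=\operatorname{FPdim}(\rZ_1(\cA))$ --- the first because reversing the braiding does not change the underlying tensor category, the second being the standard formula for the dimension of the Drinfeld center of a finite tensor category --- the Frobenius--Perron dimension criterion for finite tensor categories then upgrades $F$ to an equivalence. To prove fully faithfulness I would pass to the right adjoint $R$ of $F$ and study the algebra $A:=R(\un_{\rZ_1(\cA)})\in\cA\bt\cA^{\bop}$, noting that $F$ is fully faithful exactly when $A\cong\un$. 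I would identify $A$ with (the image of) a Lyubashenko-type coend $\cL=\int^{X\in\cA}X^{*}\ot X$ carrying its canonical Hopf pairing $\omega_{\cL}$, translate the condition $A\cong\un$ into non-degeneracy of $\omega_{\cL}$, and finally show that the kernel of $\omega_{\cL}$ is a commutative algebra whose nontriviality is detected precisely by nontrivial objects of $\rZ_2(\cA)$. Non-degeneracy of $\cA$ then forces $\omega_{\cL}$ to be non-degenerate, whence $F$ is an equivalence.

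The main obstacle is the non-semisimple bookkeeping in this last direction. In the fusion case, fully faithfulness together with the dimension count is essentially immediate from counting simple objects, but without semisimplicity one cannot argue simple-object-by-simple-object: one must secure the dimension formula $\operatorname{FPdim}(\rZ_1(\cA))=\operatorname{FPdim}(\cA)^2$ for arbitrary finite tensor categories, control the exactness and adjoints of $F$, and --- most delicately --- carry out the coend/Hopf-pairing analysis converting the vanishing of the M\"uger center into non-degeneracy of $\omega_{\cL}$. This translation between the kernel of the pairing and the symmetric center is the technical heart of the argument.
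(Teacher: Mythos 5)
First, a point of comparison: the paper does not prove this statement at all --- it is imported wholesale from \cite{ShimizuND} (Theorem 1.1 there), and in the proof of \cref{thm:body-main-result} the equivalence of non-degeneracy, factorizability and non-degeneracy of the Hopf pairing is attributed entirely to Shimizu. So your proposal must be measured against Shimizu's argument, whose broad shape (coend $\cF=\int^{x}x^\vee\otimes x$, Hopf pairing $\omega$, comparison with the M\"uger center) you have correctly identified. Your easy direction is essentially fine: if $\rZ_2(\cA)\not\simeq\Vect$ one can pick an indecomposable $X$ in it with $X\not\cong\un$ (the M\"uger center is closed under subquotients, so one may even take $X$ simple), and then $X\bt\un\not\cong\un\bt X$ while their images under $F$ agree; your phrasing ``non-isomorphic because $X\not\cong\un$'' needs this small repair, since $\un^{\oplus 2}\bt\un\cong\un\bt\un^{\oplus 2}$. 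The skeleton of your hard direction is also sound: $\operatorname{FPdim}(\rZ_1(\cA))=\operatorname{FPdim}(\cA)^2$ holds for arbitrary finite tensor categories \cite[Thm.~7.16.6]{Etingof2015}, an injective tensor functor between finite tensor categories of equal Frobenius--Perron dimension is an equivalence, and full faithfulness of $F$ is equivalent to the unit $\un\to R(\un)$ being an isomorphism via the projection formula.

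The genuine gap is that everything after this reduction is asserted rather than proved, and what is asserted is precisely the content of the theorem. Three separate claims are left hanging: (i) the identification of $A=R(\un_{\rZ_1(\cA)})$ with a specific (co)end in $\cA\bt\cA^{\bop}$ --- note that the right adjoint of $F$ is not simply related to the right adjoint of the tensor product functor $T=U\circ F$, since $T^R=F^R\circ U^R$ only computes $F^R$ on the canonical coalgebra $U^R(\un)$, not on $\un$; (ii) the equivalence of $\un\xrightarrow{\sim}A$ with non-degeneracy of $\omega$; and (iii) the equivalence of non-degeneracy of $\omega$ with triviality of $\rZ_2(\cA)$. Item (iii) is the technical heart you yourself flag, and your proposed mechanism --- that the radical of $\omega$ is ``a commutative algebra whose nontriviality is detected by nontrivial objects of $\rZ_2(\cA)$'' --- is not substantiated and is not obviously the right structural statement (in Shimizu's treatment the radical is analyzed as a coideal/Hopf ideal of $\cF$, and the passage from a nonzero radical to a transparent object, and back, in the absence of semisimplicity is a genuine theorem, not a formality). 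As it stands the proposal is a correct roadmap that defers the theorem's actual content to an unproven lemma; compare also \cref{prop:Drinfeldcofactorizable}, where the paper carries out an analogous (but different) monadic reduction in full for the cofactorizability statement.
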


\begin{theorem} A finite braided tensor category is factorizable if, and only if, it is co-factorizable.\end{theorem}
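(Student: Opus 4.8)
The plan is to prove the equivalence by exhibiting the cofactorizability morphism as the $\cS$-linear dual of the factorizability morphism, so that each is an equivalence exactly when the other is. The key input is that a finite braided tensor category $\cA$ is $3$-dualizable: being cp-rigid it is $3$-dualizable by the main result of \cite{Brochier2018}, hence by \cref{thm:Characterize3Dualizability} it is dualizable both as an object of $\cS$ and as an $\cA^e$-module. This is the ``smooth and proper'' structure that makes Hochschild homology and cohomology — here incarnated as $\HC(\cA)$ and $\rZ_1(\cA)$ — interchangeable under duality, and it is the precise reason why the two \emph{a priori} unrelated conditions must coincide.

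First I would write both maps explicitly. Setting $\cA^e=\cA\bt\cA^{\mop}$, the Drinfeld center is $\rZ_1(\cA)=\End_{\cA^e}(\cA)$, and the factorizability functor $\phi\colon \cA\bt\cA^{\bop}\to \rZ_1(\cA)$ sends a pair of objects to the corresponding left/right multiplication endofunctor equipped with the half-braiding coming from the braiding of $\cA$. On the other side, I would model $\HC(\cA)$ as the trace $\cA\bt_{\cA^e}\cA$ with bimodule structure twisted by the full braiding prescribed by the bounding framing, and $\End(\cA)=\cA\bt\cA^\vee$ as the category of $\cS$-linear endofunctors, so that the cofactorizability functor $\psi\colon \HC(\cA)\to \End(\cA)$ is the canonical ``action'' comparison, exactly parallel to the Azumaya map of \cref{thm:intro-main-result-E1}.

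Next I would produce the duality identifications. Since $\cA$ is dualizable as an $\cA^e$-module, internal hom agrees with the module-level dual, giving $\rZ_1(\cA)=\End_{\cA^e}(\cA)\simeq \cA\bt_{\cA^e}\cA^\vee$; dually, properness yields $\HC(\cA)\simeq \rZ_1(\cA)^\vee$ and $\End(\cA)\simeq (\cA\bt\cA^{\bop})^\vee$, all as dualizable objects of $\cS$ (for finite braided tensor categories $\rZ_1(\cA)$ is again finite, hence dualizable in $\LFP$). Under these identifications the directions match — $\phi^\vee\colon \rZ_1(\cA)^\vee\to(\cA\bt\cA^{\bop})^\vee$ becomes a functor $\HC(\cA)\to\End(\cA)$ — and the goal is to verify $\psi\simeq \phi^\vee$. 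The main obstacle is precisely this identification: one must check that the bimodule-duality twist carried by $\cA^\vee$, governed by the Radford/Nakayama (relative Serre) functor, coincides as a twist with the bounding-framing twist built into $\HC(\cA)$, and correspondingly that the $y$-axis reflection $\cA^{\bop}$ on the source of $\phi$ matches the dual structure on the source of $\psi$. Making every braiding, ribbon, and framing convention line up is where the real work lies, and it is exactly the bookkeeping that the $E_2$/bounding-framing formalism of \cref{sect:Enalgebras} is designed to control. Once $\psi\simeq\phi^\vee$ is established, the conclusion is formal: a morphism of dualizable objects of $\cS$ is an equivalence if and only if its dual is, so $\phi$ is an equivalence precisely when $\psi$ is.

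As a sanity check and an independent route compatible with the complete statement in \cref{thm:body-main-result}, I would also verify directly that cofactorizability is equivalent to non-degeneracy (triviality of the Müger center $\rZ_2(\cA)$): by \cref{thm:Shimizu} factorizability already is, so this would reprove the theorem by transitivity. Concretely, both conditions reduce to non-degeneracy of the canonical pairing on the image of the coend $\int^{X\in\cA} X^\vee\otimes X$, which provides a purely tensor-categorical confirmation that $\phi$ and $\psi$ are simultaneously equivalences.
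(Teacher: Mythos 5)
Your argument rests entirely on the identification $\psi\simeq\phi^\vee$, and this is precisely the step you defer (``the main obstacle is precisely this identification \ldots where the real work lies''). Since everything after that point is formal, the proposal is a plan rather than a proof, and I do not believe the plan survives the bookkeeping. The duality calculus you invoke is exactly the one used in the proof of \cref{thm:E2invertible}, and there it pairs the conditions differently: the $\cA^e$-linear dual of the factorizability map is condition (1) of \cref{thm:E2invertiblemorphism} (the evaluation $\cA^!\bt_{\cA\bt\cA^{\bop}}\cA\to\cA^e$), and the $\cS$-linear dual of non-degeneracy is condition (3); cofactorizability (condition (4)) survives as an independent third condition, not as the dual of factorizability. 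If cofactorizability really were $\phi^\vee$ up to composition with equivalences, the three conditions of \cref{thm:E2invertible} would collapse to two for every $3$-dualizable $E_2$-algebra in every $\cS$ --- a much stronger statement than the one being proved, and one the mates calculus alone does not deliver. What your dualization actually produces is a map out of $\rZ_1(\cA)^\vee\simeq\cA^*\bt_{\cA^e}\cA$, whose bimodule twist is governed by the Nakayama/Radford double-dual, whereas $\HC(\cA)\simeq\cA\bt_{\cA^e}{}_{\id}\cA_{(\id,\sigma^2)}$ carries the braiding twist; matching these is not a convention check but a genuine comparison of two functors between categories that merely happen to be abstractly equivalent, and you give no argument that the two maps agree even up to pre- and post-composition with equivalences. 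Your fallback route has the same shape of gap: asserting that cofactorizability ``reduces to non-degeneracy of the canonical pairing on the coend'' is the nontrivial claim, not a sanity check.

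The paper's proof avoids functor-level duality altogether and instead reduces both conditions to the invertibility of a single morphism \emph{inside} $\cA$, the Drinfeld map $\Dr\colon\cF\to\cE$ from the canonical coend to the canonical end. Concretely, \cref{prop:HCREA} and \cref{prop:tensormonadic} exhibit $\HC(\cA)\cong\Mod_\cF(\cA)$ and $\End(\cA)\cong\Mod_\cE(\cA)$ as monadic over $\cA$, so the cofactorizability functor is an equivalence if and only if the induced map of monads is, which (both monads being tensoring by an algebra) happens if and only if its value on $\un_\cA$, namely $\Dr$, is an isomorphism (\cref{prop:Drinfeldcofactorizable}). On the other side, \cref{thm:Shimizu} gives factorizability $\Leftrightarrow$ non-degeneracy of the Hopf pairing $\omega$ on $\cF$, and the result of Farsad--Gainutdinov--Runkel identifies that with invertibility of $\Dr$. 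If you want to salvage your approach, the realistic path is to supply the monadicity argument of \cref{prop:Drinfeldcofactorizable} rather than to chase the duality identification.
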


\begin{remark}
Outside of finite braided tensor categories, non-degeneracy does not imply factorizability, nor co-factorizability.  For example, the category of integrable representations of a semisimple quantum group at generic quantization parameter is non-degenerate, but is clearly not factorizable.  We do not know whether either factorizability or co-factorizability imply finiteness (see \cref{q:invimplfin}).
\end{remark}

Besides clarifying proofs, the generality of \cref{thm:intro-main-result-general} makes it possible to consider invertibility of $E_2$-algebras in contexts other than finite braided tensor categories.  For instance, the 3-dimensional Rozansky--Witten TFT \cite{RozanskyWitten} associated to a holomorphic symplectic manifold $X$ gives rise to a ribbon braided tensor structure on the bounded derived category of coherent sheaves $D^b_{coh}(X)$ \cite{RobertsWillerton}.  We expect that this braided tensor category is invertible in the suitably derived version of $\BrTens$, and hence defines an invertible 4D topological field theory. In addition, we expect that one may formulate the Rozansky--Witten TFT as a field theory relative to the resulting 4D theory, in precisely the way that the Witten-Reshetikhin--Turaev 3D TFT \cite{Witten1989, Reshetikhin1991} is constructed relative to the 4D Crane--Yetter--Kauffman theory.  We hope to return to this in future work.

\subsection{Conjectural extension to $E_n$-algebras}

Let us now collect two conjectures generalizing our results for dualizability and invertibility of $E_1$ and $E_2$-algebras to $E_n$-algebras.  An important theorem of \cite{Gwilliam2018} (c.f. \cite[Claim 4.1.14]{Lurie2009}) states that every $E_n$ algebra $\cA\in\alg_n(\cS)$ is $n$-dualizable.  The following characterization for $(n+1)$-dualizability of $E_n$-algebras was formulated in \cite[Remark 4.1.27]{Lurie2009}, where it is remarked that it would follow from an unpacking of the proof of the cobordism hypothesis.  It is formulated in terms of the factorization homology (or topological chiral homology) $\int_M\cA$ of a framed $n$-manifold $M$, with coefficients in an an $E_n$-algebra $\cA$ (see \cref{def:facthom} for a brief recollection, or \cite{Ayala2015,Lurie2009} for complete definitions).

Let us fix the framing on $S^{k-1}\times \RR$ which bounds an $k$-disk, and its product framing on $S^{k-1}\times \RR^{n-k+1}$ with the trivial framing on $\RR^{n-k}$. This induces an action of the $E_{n-k+1}$-algebra $\int_{S^{k-1}\times \RR^{n-k+1}} \cA$ on $\int_{\RR^n} \cA\cong \cA$.
\begin{conjecture}
\label{conj:Lurie-dualizability}
An $E_n$-algebra $\cA\in\alg_n(\cS)$ is $(n+1)$-dualizable if, and only if, it is dualizable over the factorization homologies $\int_{S^{k-1}\times \RR^{n-k+1}} \cA$ for $k=0,\ldots, n$.
\end{conjecture}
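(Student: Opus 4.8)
The plan is to prove \cref{conj:Lurie-dualizability} by induction on $n$, combining the inductive characterization of partial dualizability in a symmetric monoidal higher category with a geometric reinterpretation of the sphere algebras. The key preliminary observation is that, since $\RR^k\setminus\{0\}\simeq S^{k-1}\times\RR$, the sphere algebra can be rewritten as $\int_{S^{k-1}\times\RR^{n-k+1}}\cA\cong\int_{\RR^n\setminus\RR^{n-k}}\cA$, the $E_{n-k+1}$-algebra of observables on the complement of a codimension-$k$ linear defect $\RR^{n-k}\subset\RR^n$; the module structure on $\cA\cong\int_{\RR^n}\cA$ is then restriction along the inclusion of this complement into the full disk. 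Under the cobordism-hypothesis dictionary a codimension-$k$ stratum of a framed bordism has link $S^{k-1}$, and the $k$-th layer of dualizability data for $\cA$ is exactly what is needed to evaluate the prospective theory on a tubular neighborhood of such a stratum; making this correspondence precise is the geometric content of the proof.

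First I would invoke the general inductive criterion for partial dualizability: in any symmetric monoidal higher category an object $X$ is $(j+1)$-dualizable if and only if it admits a dual $X^\vee$ and the coevaluation (equivalently evaluation) $1$-morphism is $j$-dualizable, i.e. admits a full tower of adjoints through level $j$. Applied to $\cA\in\alg_n(\cS)$, whose $n$-dualizability is already supplied by \cite{Gwilliam2018}, this reduces $(n+1)$-dualizability to the existence of a dual object together with $n$-dualizability of the coevaluation $1$-morphism. The existence of the dual is the outermost ($k=0$) condition, namely dualizability over $\int_{\emptyset}\cA=\un_\cS$, which is dualizability in $\cS$; the dual object itself is an appropriate reflection of $\cA$, generalizing the opposite algebra $\cA^{\mop}$ of the $E_1$ case and the reflection $\cA^{\bop}$ of the $E_2$ case. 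The core of the argument is then a looping step: using the iterated-algebra identification $\alg_n(\cS)\simeq\alg_1(\alg_{n-1}(\cS))$ of \cite{Haugseng2017} and the looping of the associated Morita category, I would exhibit the mapping category in which the coevaluation lives — with the coevaluation as one of its objects — as a Morita category of $E_{n-1}$-algebras over a module base, in such a way that the sphere algebras $B_k=\int_{S^{k-1}\times\RR^{n-k+1}}\cA$ of the $E_n$-problem match, under the index shift $k\mapsto k-1$, the sphere algebras of the resulting $E_{n-1}$-problem. The inductive hypothesis would then convert $n$-dualizability of the coevaluation into dualizability of $\cA$ over $B_1,\ldots,B_n$, all computed by factorization homology over framed manifolds-with-corners whose deepest link is $S^{k-1}$.

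The induction bottoms out in low degree and matches the known cases. For $n=0$ an $E_0$-algebra is a pointed object and $1$-dualizability is exactly dualizability over $\un_\cS=\int_\emptyset\cA$. For $n=1$ the statement recovers classical Morita theory — an associative algebra is $2$-dualizable in $\alg_1(\cS)$ iff it is dualizable in $\cS$ (the $k=0$ term) and dualizable as a bimodule over its enveloping algebra $\cA^e=\int_{S^0\times\RR}\cA$ (the $k=1$ term) — which is the smoothness/properness input behind \cref{thm:intro-main-result-E1}. The case $n=2$ is precisely \cref{thm:Characterize3Dualizability}, whose three terms $k=0,1,2$ are dualizability in $\cS$, over $\cA^e=\int_{S^0\times\RR^2}\cA$, and over $\HC(\cA)=\int_{S^1\times\RR}\cA$; its proof is already an instance of the looping reduction above, which is strong evidence that the scheme generalizes.

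The main obstacle is the coherence of this tower of adjunctions under the looping reduction, and it has two faces. First, one must prove that looping the Morita category and passing to the mapping category of the coevaluation genuinely returns a Morita category of $E_{n-1}$-algebras over an enlarged base, and that the sphere algebras transform as claimed under the index shift; without this the inductive step does not close. Second, and more delicately, the (co)evaluation and all of its higher adjoints are factorization homologies over framed manifolds-with-corners, and assembling them into a single coherent system of adjunctions requires careful control of the framings on the links $S^{k-1}$ and of the product framings induced on $S^{k-1}\times\RR^{n-k+1}$. This framing bookkeeping is the combinatorial heart of the ``unpacking of the cobordism hypothesis'' anticipated in \cite[Remark 4.1.27]{Lurie2009}, and it is where I expect the genuine work to lie — in particular in ensuring that, because $\cS$ is only an $(\infty,2)$-category, the tower of conditions terminates correctly, with the top layers landing in $\cS$ itself rather than demanding dualizability data that $\cS$ cannot provide.
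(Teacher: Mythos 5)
The statement you are addressing is stated in the paper only as a conjecture: the authors prove the case $n=1$ (recalled from Lurie as \cref{thm:2dualizabilityE1}) and the case $n=2$ (\cref{thm:3dualizabilityE2}), and explicitly defer the general case to ``an unpacking of the proof of the cobordism hypothesis.'' Your strategy is consistent with how those two cases are actually handled in the paper --- reduce $(n+1)$-dualizability to right-adjointability of the (co)evaluation data via \cref{thm:easy2dualizability} and \cref{thm:easy3dualizability}, then descend through the Morita layers via \cref{prop:forgetE2E1} and \cref{prop:E1Hom1adjoints} until each condition becomes dualizability of $\cA$ over a sphere algebra --- so as a reading of the low-dimensional evidence your outline is accurate, and your identification of the $k=0,1,2$ terms with dualizability in $\cS$, over $\cA^e$, and over $\HC(\cA)$ matches \cref{thm:Characterize3Dualizability} exactly.

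As a proof of the conjecture for general $n$, however, it has two genuine gaps, both of which you flag but neither of which you close. First, your ``general inductive criterion'' --- that $(j+1)$-dualizability is equivalent to having a dual together with a tower of adjoints on the coevaluation through level $j$, and moreover that only a finite explicit list of \emph{right} adjoints needs to be checked --- is precisely the content of \cref{thm:easy2dualizability} and \cref{thm:easy3dualizability}, which are only available for $j\le 2$ (the latter is a result of Araujo's thesis specific to symmetric monoidal $3$-categories). The analogue for arbitrary $n$, reducing full dualizability in a symmetric monoidal $(\infty,n+2)$-category to a finite list of right-adjointability conditions, is itself an open unpacking of the cobordism hypothesis; assuming it is assuming the hardest part of the theorem. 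Second, the looping step is asserted rather than proved: the coevaluation lives in $\Hom_{\alg_n(\cS)}(\un_\cS,\cA\bt\cA^\vee)$, which is a category of modules over $\cA\bt\cA^\vee$ rather than $\alg_{n-1}$ of anything, and identifying the successive unit and counit morphisms of the resulting adjunction towers with $\cA$ as a module over $\int_{S^{k-1}\times\RR^{n-k+1}}\cA$ equipped with the correct bounding framings is exactly the combinatorial heart of the problem. In the paper this identification is carried out by hand for $n=2$ in \cref{thm:2dualizabilityE2} (note in particular the braiding argument needed in the $\epsilon_{\coev}$ case of the proof of \cref{thm:3dualizabilityE2} to recognize an $\cA\bt\cA$-action as the $\cA^e$-action); no mechanism is given, by you or by the authors, that produces these identifications for all $n$. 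So your proposal is a reasonable research program that matches the authors' stated expectations, but it is not a proof, and the two steps above are where it currently fails.
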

We note that the forward implication is clear.  The case $n=1$ of the conjecture is proved in \cite{Lurie2009}, and recalled in \cref{thm:2dualizabilityE1}. We prove the case $n=2$ relevant to the present paper in \cref{thm:3dualizabilityE2}.

Recall that the $E_k$-center $\rZ_k(\cA)$ of an $E_n$-algebra $\cA$, for $k\leq n$, is:
\[\rZ_k(\cA):= \End_{\int_{S^{k-1}\times \RR^{n-k+1}} \cA}(\cA).\]
The pattern of \cref{thm:intro-main-result-general} leads us to make the following analog of \cref{conj:Lurie-dualizability}:
\begin{conjecture}\label{conj:invertibility}
An $E_n$-algebra $\cA$ is invertible if, and only if, it is $(n+1)$-dualizable and the canonical maps
\[\int_{S^{k-1}\times \RR^{n-k+1}} \cA\to \rZ_{n-k}(\cA)\]
are equivalences for $k = 0,\ldots, n$.
\end{conjecture}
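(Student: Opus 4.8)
The plan is to prove both directions by induction on $n$, taking the cases $n=1$ and $n=2$ --- that is, \cref{thm:intro-main-result-E1} and \cref{thm:intro-main-result-general} --- as the base of the induction, and to drive the inductive step through the iterated Morita description $\alg_n(\cS)\simeq\alg_1(\alg_{n-1}(\cS))$ together with a base-independent version of the $E_1$-criterion.

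The forward implication is the easier one, exactly as the forward implication of \cref{conj:Lurie-dualizability} is clear. If $\cA$ is invertible then it is fully dualizable, hence in particular $(n+1)$-dualizable. For the linking maps I would use that factorization homology $\int_{(-)}\cA$ is symmetric monoidal in the coefficient algebra, so that it carries the invertible object $\cA$ to invertible algebras $\int_M\cA$ for every framed $M$. The canonical map $\int_{S^{k-1}\times\RR^{n-k+1}}\cA\to\rZ_{n-k}(\cA)$ is induced by including an $(n-k-1)$-sphere that links the defining $(k-1)$-sphere of the center inside $\RR^n$; once $\cA$ is invertible this becomes a map of invertible objects realizing a perfect linking pairing, and I would verify it is an equivalence by a duality and naturality argument, anchored on the fact that for the unit $E_n$-algebra (where $\int_M\un\simeq\un$ and $\rZ_j(\un)\simeq\un$) every such map is the identity.

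For the reverse implication I would proceed as follows. By $\alg_n(\cS)\simeq\alg_1(\alg_{n-1}(\cS))$, the $E_n$-algebra $\cA$ is an $E_1$-algebra $\tilde\cA$ in the closed symmetric monoidal base $\tilde\cS:=\alg_{n-1}(\cS)$. The first step is a base-independent enhancement of \cref{thm:intro-main-result-E1}: over an arbitrary closed symmetric monoidal base, a suitably dualizable $E_1$-algebra is invertible if and only if it is \emph{central} and \emph{Azumaya}. I expect the proof of \cref{thm:intro-main-result-E1} to go through with $\cS$ replaced by $\tilde\cS$ once the correct higher-dualizability hypothesis is isolated, and I would supply that hypothesis for $\tilde\cA$ from the assumed $(n+1)$-dualizability of $\cA$, which is stronger than the $2$-dualizability that the criterion consumes. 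Applying this enhancement over $\tilde\cS$ reduces the invertibility of $\cA$ to centrality, $\un\to\rZ_1^{\tilde\cS}(\tilde\cA)$, and the Azumaya condition, $\tilde\cA\bt\tilde\cA^{\op}\to\End_{\tilde\cS}(\tilde\cA)$, now interpreted inside $\tilde\cS$.

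The crux is then to match these two conditions, together with the inductive hypothesis, to the full tower of $n+1$ linking conditions for $\cA$. Here I would use the pushforward (Fubini) formula $\int_{M\times N}\cA\simeq\int_N\bigl(\int_M\cA\bigr)$ to express $\rZ_1^{\tilde\cS}(\tilde\cA)$ and $\End_{\tilde\cS}(\tilde\cA)$ as factorization homologies of $\cA$ over linked configurations in $\RR^n$, identifying centrality and the Azumaya condition over $\tilde\cS$ with two of the linking conditions, and to recognize the remaining intermediate centers $\rZ_j(\cA)$ as centers of the $E_{n-1}$-algebras obtained by integrating $\cA$ in one direction, so that the conditions for $j=1,\dots,n-1$ are exactly the outputs of the inductive hypothesis applied to those $E_{n-1}$-algebras. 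I expect this bookkeeping to be the main obstacle: one must show that the center and internal-endomorphism $E_{n-1}$-algebras arising from the outer $E_1$-structure are themselves invertible precisely when the lower linking maps are equivalences, so that the induction closes, and one must control the higher coherences of these equivalences rather than merely the underlying objects. The genuine independence of the conditions over a general base $\cS$, emphasized in the Remark following \cref{thm:intro-main-result-general}, means that none can be dropped or deduced from the others, so the argument must genuinely produce all $n+1$ of them simultaneously and cannot collapse the tower; making the linked-sphere maps and their mutual compatibilities precise across all $j$ at once, in the fully $\infty$-categorical setting, is where the real work lies.
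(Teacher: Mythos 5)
First, a point of calibration: the paper offers no proof of this statement. It is stated as \cref{conj:invertibility} and is proved only for $n=1$ and $n=2$ (\cref{thm:E1invertible,thm:E2invertible}); for general $n$ the authors merely indicate that ``the same techniques should work\ldots provided one has a sufficiently general calculus of mates'' (\cref{rem:mates}). The paper's envisioned route is thus a direct one: iterate \cref{lm:invertibility} down the tower of units and counits of the dualizability data (as in \cref{thm:E2invertiblemorphism}), then use mates/duals of module maps to collapse the resulting pairs of conditions into the $n+1$ linking conditions. Your proposal takes a genuinely different route --- induction on $n$ through $\alg_n(\cS)\simeq\alg_1(\alg_{n-1}(\cS))$ and a base-changed $E_1$-criterion --- and, like the paper, it is a strategy rather than a proof. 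So the right comparison is between two incomplete plans, and yours has gaps that are at least as serious as the one the paper flags.

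The central gap is the ``base-independent enhancement'' of \cref{thm:intro-main-result-E1}. That theorem is proved for a closed symmetric monoidal $(\infty,2)$-category $\cS$ with geometric realizations, where $\End(\cC)$ is an internal Hom computed from honest module structures in $\cS$ and where \cref{lm:invertibility} need only be applied to a two-level tower. If you take $\tilde\cS=\alg_{n-1}(\cS)$ to be the higher Morita category, the hypotheses and the meaning of the conditions both change: $\tilde\cS$ is an $(\infty,n+1)$-category in which every object is dualizable (by the result of \cite{Gwilliam2018}), so the naive internal endomorphism object is $\tilde\cA^\vee\bt\tilde\cA$ and the ``Azumaya'' condition threatens to become either vacuous or something other than the cofactorizability-type condition $\int_{S^{n-1}\times\RR}\cA\to\rZ_0(\cA)$ appearing in the conjecture; moreover ``modules over $\tilde\cA$ in $\tilde\cS$'' in the Morita sense (central algebras and centered bimodules) do not obviously agree with the $\U^{E_k}_\cA$-modules in $\cS$ whose endomorphisms define $\rZ_k(\cA)$. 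If instead you take $\tilde\cS$ to be $E_{n-1}$-algebras with algebra homomorphisms, then $\alg_1(\tilde\cS)$ is no longer the Morita category $\alg_n(\cS)$ in which invertibility is being asserted. Reconciling these --- i.e., proving that the centrality and Azumaya conditions over $\tilde\cS$, plus invertibility of the intermediate $E_{n-1}$-algebras fed into the inductive hypothesis, are jointly equivalent to the $n+1$ conditions of the conjecture --- is not ``bookkeeping''; it is the theorem. Finally, your forward-direction argument via a ``perfect linking pairing'' is not needed and not justified as stated: the clean argument (and the one implicit in the paper) is that invertibility forces every unit and counit in the adjunction tower to be invertible, and these units and counits \emph{are} the canonical maps in question after taking mates. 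I would recommend abandoning the Dunn-additivity induction in favor of extending the paper's mate calculus, which is where the authors themselves locate the missing ingredient.
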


Again the forward implication is clear.  \cref{thm:E1invertible,thm:E2invertible} confirm the conjecture in the cases $n=1$ and $n=2$, and the same techniques should work in the general case provided one has a sufficiently general calculus of mates (see \cref{rem:mates}).  We therefore expect \cref{conj:invertibility} to be significantly easier to prove than \cref{conj:Lurie-dualizability}. In particular, in contrast to \cref{conj:Lurie-dualizability}, we expect it can be proved independently from the proof or even the statement of the cobordism hypothesis.    In order to apply this conjecture to any examples, however, one must verify $(n+1)$-dualizability, which involves the previous more difficult conjecture.

\subsection{Acknowledgements}
DJ is supported by European Research Council (ERC) under the European Union's Horizon 2020 research and innovation programme (grant agreement no. 637618). PS is supported by the NCCR SwissMAP grant of the Swiss National Science Foundation. NS is supported by NSF grant DMS-1454767. DJ and NS were partially supported by NSF grant DMS-0932078, administered by the Mathematical Sciences Research Institute while they were in residence at MSRI during the Quantum Symmetries program in Spring 2020.  The authors are grateful also to the ICMS RIGS program for hospitality during a research visit to Edinburgh.

We would like to thank Dan Freed and Dmitri Nikshych helpful questions and comments concerning the Picard group of $\BrTens$. NS would like to thank Sean Sanford for excellent expositional talks on Shimizu's work. 
\section{Dualizability and invertibility of $E_1$- and $E_2$-algebras}

In this section we recall some fundamental definitions and results about the Morita category of $E_n$-algebras, and we prove our main general results about dualizability and invertibility for $E_2$-algebras.  To clarify notation, we will keep a running example $\cS=\LFP$, but we stress that the results in this section are all general, and that any results which are specific to braided tensor categories are delayed until the next section.

\subsection{$E_n$-algebras and higher Morita categories}
\label{sect:Enalgebras}
	
We begin by briefly recalling the notions of an $E_n$-algebra in a closed symmetric monoidal $(\infty,2)$-category $\cS$ which admits geometric realizations.  We denote its symmetric monoidal structure by $\boxtimes$.  We refer to \cite[Chapter 5]{Lurie} and \cite{Ayala2015} for more details on the following definitions.

\begin{definition}
Let $\Mfld_n^{fr}$ denote the topological category whose objects are framed $n$-manifolds, and whose space of morphisms between $M$ and $N$ is the topological space $\Emb^{fr}(M,N)$ of framed embeddings\footnote{Recall that a framed embedding of $M$ into $N$ is an embedding of topological manifolds, together with an isotopy $\gamma$ between the framing on $M$ and the framing pulled back from $N$.  In particular, a framed embedding is not required to preserve the framing strictly.}.  We equip $\Mfld_n^{fr}$ with the structure of a symmetric monoidal category under disjoint union.  We denote by $\Disk_n^{fr}$ the full subcategory consisting of finite disjoint unions of the standard open disk $(0,1)^n$
\end{definition}

\begin{definition}
\label{def:facthom}
An \defterm{$E_n$-algebra} in $\cS$ a symmetric monoidal functor $\cA\colon\Disk_n^{fr}\to\cS$.  The \defterm{factorization homology} with coefficients in $\cA$ is the left Kan extension of the functor $\cA$ along the inclusion of $\Disk_n^{fr}$ into $\Mfld_n^{fr}$, and is denoted by $M\mapsto\int_M\cA$.
\end{definition}

\begin{example}\label{ex:tens-cat}
In the familiar example $\cS=\LFP$ (see \cref{sec:tensor}), an $E_1$-algebra is a locally presentable category tensor category with a colimit preserving tensor product.  Henceforth we will call this a `tensor category', without a further requirement of rigidity, and will make explicit any notion of rigidity (e.g., compact-rigid or cp-rigid) we assume.  Similarly, an $E_2$-algebra in $\LFP$ is a braided tensor category, and an $E_k$-algebra, for $k\geq 3$, is a symmetric tensor category.

When we identify $E_2$-algebras in $\LFP$ with braided tensor categories, we use the following conventions, following \cite{Brochier2018}: the tensor multiplication is in the $x$-direction, and the braiding is given by counterclockwise rotation.  
\end{example}

Let us fix some more notation for later use. An $E_1$-algebra $\cC$ has an opposite $E_1$-algebra $\cC^{\mop}$ coming from precomposing by reflection.  The notation $\mop$ is intended to emphasize that we are taking the dual in the multiplication direction, and not taking a dual in the underlying category $\cS$.  An $E_2$-algebra $\cA$ has two {\it a priori} distinct notions of opposite:  $\cA^{\mop}$ where we reflect in the $x$-direction and $\cA^{\bop}$ where we reflect in the $y$-direction.   We have two canonical equivalences of $E_2$-algebras
\begin{equation}
\cA^{\mop}\simeq\cA^{\bop},
\label{eq:mopbop}
\end{equation}
given by a 180 degree rotation in the clockwise and counter-clockwise directions.  

\begin{example}
Consider the case $\cS=\LFP$ and suppose $\cA\in\alg_2(\cS)$ is a braided tensor category. The braided tensor category $\cA^{\mop}$ has the same underlying category, the tensor product $x\otimes^{\op} y = y\otimes x$ and the braiding
\[x\otimes^{\op} y = y\otimes x\xrightarrow{\sigma^{-1}_{x, y}} x\otimes y = y\otimes^{\op} x.\]
The braided tensor category $\cA^{\bop}$ has the same underlying tensor category and the braiding $\sigma^{-1}_{y, x}\colon x\otimes y\rightarrow y\otimes x$. The two canonical equivalences in this case are each given by equipping the identity functor $\id\colon \cA^{\mop}\rightarrow \cA^{\bop}$ with a braided tensor structure given by the braiding, and its inverse.
\end{example}

\begin{definition}[{See \cite[Section 7]{Ginot2015}}]
Let $\cA$ be an $E_n$-algebra. Its \defterm{enveloping algebra} is the $E_1$-algebra
\[\U_\cA^{E_n} = \int_{S^{n-1}\times \RR} \cA,\]
where $S^{n-1}\times \RR$ carries the framing bounding the $n$-ball.
\end{definition}

The enveloping algebra $\U_\cA^{E_n}$ has a natural left module action on $\int_{\RR^n} \cA\cong \cA$, coming from the sphere bounding the ball.

\begin{definition}
Let $\cA$ be an $E_n$-algebra. Its \defterm{$E_n$-center} is the object
\[\rZ_n(\cA) = \End_{\U_\cA^{E_n}}(\cA)\in\cS.\]
\end{definition}

By \cite[Proposition 3.16]{Francis2013} we have an equivalence of $\infty$-categories
\[\LMod_{\U_\cA^{E_n}}\cong \Mod_\cA^{E_n}\]
between the $\infty$-category of left $\U_\cA^{E_n}$-modules and the $\infty$-category of $E_n$-$\cA$-modules. Moreover, the notion of a center introduced in \cite[Definition 5.3.1.6]{Lurie} is shown in \cite[Theorem 5.3.1.30]{Lurie} to coincide with $\End_{\Mod_\cA^{E_n}}(\cA)$. So, $\rZ_n(\cA)$ is indeed the center of the $E_n$-algebra in the sense of Lurie.

In addition, by \cite[Theorem 3.3.3.9]{Lurie} $\Mod_\cA^{E_n}$ is an $E_n$-monoidal $\infty$-category with $\cA\in\Mod_\cA^{E_n}$ the unit object. So, by the Dunn--Lurie additivity theorem \cite[Theorem 5.1.2.2]{Lurie} (cf. \cite{Dunn, MR3311768}) $\rZ_n(\cA)$ is an $E_{n+1}$-algebra.

\begin{example}
Suppose $\cC$ is an $E_1$-algebra. We denote
\[\cC^e = \U_\cC^{E_1}\cong \cC\bt\cC^{\mop}.\]
In the case $\cS=\LFP$, it is shown in \cref{prop:Drinfeldcenter} that the $E_1$-center $\rZ_1(\cC)$ coincides with the Drinfeld center of the tensor category $\cC$.
\end{example}

\begin{example}
Suppose $\cA$ is an $E_2$-algebra. We denote
\[\HC(\cA) = \U_\cA^{E_2}\cong \cA\bt_{\cA\bt\cA^{\bop}} \cA^{\mop},\]
where we regard $\cA$ (resp., $\cA^{\mop}$) as an $E_1$-algebra in right (resp., left) $\cA\bt\cA^{\bop}$-modules, hence the relative tensor product inherits again an $E_1$-structure. In the case $\cS=\LFP$, it is shown in \cref{prop:Mugercenter} that the $E_2$-center $\rZ_2(\cA)$ of coincides with the M\"uger center of the braided tensor category $\cA$.
\end{example}


The collection of $E_n$-algebras in a fixed $\cS$ carries the structure of a symmetric monoidal $(\infty, n+2)$-category $\alg_n(\cS)$.  We refer to the papers \cite{Johnson-Freyd2017,Haugseng2017,Scheimbauer2014,Calaque} for a rigorous construction of $\alg_n(\cS)$ in the model of iterated complete Segal spaces, and to \cite{Gwilliam2018} for a digestable exposition, and a treatment of dualizability.  Let us remark for experts that we will require Haugseng's ``unpointed'' model in order to treat questions of higher dualizability and invertibility (see \cite[\S 1.4]{Gwilliam2018} and the discussion there).

While the works cited above are required in order to make rigorous sense of the composition laws of higher morphisms, and their compatibilities, it is possible nevertheless to give an informal description of objects and morphisms themselves, in $\alg_1(\cS)$ and $\alg_2(\cS)$:

\begin{definition}[Sketch] The $(\infty,3)$-category $\alg_1(\cS)$ has:
\begin{itemize}
\item As objects $E_1$-algebras $\cA,\cB$, \ldots
\item As 1-morphisms the $(\cA,\cB)$-bimodules $\cM,\cN,\ldots$
\item As 2-morphisms the bimodule 1-morphisms
\item As 3-morphisms the bimodule 2-morphisms
\end{itemize}
The symmetric monoidal structure, as well as the composition of 2- and 3-morphisms, are those inherited from $\cS$.  Composition of $1$-morphisms is the relative tensor product of bimodules.
\end{definition}

\begin{definition}[Sketch] The $(\infty,4)$-category $\alg_2(\cS)$ has:
\begin{itemize}
\item As objects $E_2$-algebras $\cA,\cB$, \ldots
\item As 1-morphisms the $E_1$-algebra objects $\cC$, $\cD$, \ldots in $(\cA,\cB)$-bimodules,
\item As 2-morphisms the $(\cC,\cD)$-bimodule objects $\cM,\cN,\ldots$ in $(\cA,\cB)$-bimodules
\item As 3-morphisms the bimodule 1-morphisms
\item As 4-morphisms the bimodule 2-morphisms
\end{itemize}
The symmetric monoidal structure, as well as the composition of 3- and 4-morphisms, are compatible with those in $\cS$.  For example, to compose $3$-morphisms we endow the composition of the underlying $1$-morphisms with the structure of a bimodule $1$-morphism in an appropriate way. Composition of $1$- and $2$-morphisms is given by the relative tensor product of bimodules, equipping the resulting composition in the case of $1$-morphisms with a canonical $E_1$-algebra structure.
\end{definition}

There is a potential ambiguity in the notion of $(\cA,\cB)$-bimodule appearing above, since $\cA$ and $\cB$ each admit multiplications in both the $x$- and $y$- directions.  Hence let us fix the following conventions, following \cite{Brochier2018}:

\begin{itemize}
\item The $(\cA,\cB)$-bimodule structure on a $1$-morphism is with respect to multiplication \emph{in the $y$-direction} for both $\cA$ and $\cB$, whereas (by \cref{ex:tens-cat}) the underlying $E_1$-algebra associated to an $E_2$-algebra is in the $x$-direction. This means that an $(\cA,\cB)$-bimodule is an $\cA\bt\cB$-module (rather than an $\cA\bt\cB^{op}$-module).

\item Hence the data of an $E_1$-algebra in $(\cA,\cB)$-bimodules is equivalent to the data of an $E_1$-algebra $\cC$ in $\cS$, together with a morphism $\cA\bt\cB^{\bop}\to \rZ_1(\cC)$.  We call such data a \defterm{$(\cA,\cB)$-central algebra}.

\item Given $E_2$-algebras $\cA, \cB$ and $(\cA, \cB)$-central algebras $\cC, \cD$ a $(\cC, \cD)$-bimodule object $\cM$ in the monoidal category of $(\cA, \cB)$-bimodules is an \defterm{$(\cA, \cB)$-centered $(\cC, \cD)$-bimodule}.
\end{itemize}

\subsection{Dualizability for $E_1$- and $E_2$-algebras}
Recall from \cite[Definition 2.3.16]{Lurie2009} that a symmetric monoidal $(\infty,k)$-category \defterm{has duals} if every object has a dual, and every $i$-morphism has a left and right adjoint for $1 \leq i < k$.  An object in an $(\infty,n)$-category is called \defterm{$k$-dualizable} if it belongs to a full sub $(\infty, k)$-category which has duals.

In this section we discuss dualizability and adjointability in the Morita categories $\alg_1(\cS)$ and $\alg_2(\cS)$.  The case of $\alg_1(\cS)$ is well-known, but the results for $\alg_2(\cS)$ are new.  We begin by looking at dualizability in $\alg_1(\cS)$.

\begin{proposition}
\label{thm:1dualizabilityE1}
Every $E_1$-algebra $\cC\in\alg_1(\cS)$ is 1-dualizable with dual $\cC^\vee=\cC^{\mop}$, and with evaluation and co-evaluation given by versions of the regular bimodule:
\begin{itemize}
\item $\ev$ is $\cC$ as a $(\cC^{\mop}\bt\cC,\un_\cS)$-bimodule.
\item $\coev$ is $\cC$ as a $(\un_\cS,\cC\bt\cC^{\mop})$-bimodule.
\end{itemize}
\end{proposition}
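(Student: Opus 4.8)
The plan is to exhibit $\cC^{\mop}$ as an explicit dual to $\cC$ by writing down the evaluation and coevaluation $1$-morphisms as the regular bimodule $\cC$, and then verifying the two triangle (zig-zag) identities up to the required invertible $2$-morphisms. The conceptual content is entirely standard Morita-theoretic bookkeeping, so the work is really to name the structures correctly and check that the relative tensor products compose as claimed. First I would record the bimodule structures: $\cC$ is naturally a $(\cC\bt\cC^{\mop},\un_\cS)$-bimodule via left and right multiplication, and since the dual is being taken in $\alg_1(\cS)$ rather than in $\cS$, the evaluation $\ev\colon \cC^{\mop}\bt\cC\to\un_\cS$ and coevaluation $\coev\colon\un_\cS\to\cC\bt\cC^{\mop}$ are both modeled by $\cC$ itself, with the left/right actions distributed as stated. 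Here one uses the canonical identification $\cC^{\mop}\bt\cC\cong \cC^e$ (and its reflection), so that $\cC$ acquires exactly the regular $\cC^e$-module structure.

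The key step is the verification of the two triangle identities. For the first, I would compute the composite
\[
\cC\xrightarrow{\coev\,\bt\,\id_\cC} \cC\bt\cC^{\mop}\bt\cC \xrightarrow{\id_\cC\,\bt\,\ev}\cC
\]
as a composite of $1$-morphisms in $\alg_1(\cS)$, which amounts to forming the relative tensor product of the three regular bimodules over the intermediate algebras. Concretely, the composite is $\cC\bt_{\cC^e}(\cC\bt\cC)\bt_{\cC^e}\cC$ with the appropriate actions, and the claim is that this relative tensor product is equivalent, as a $(\cC,\cC)$-bimodule, to the regular bimodule $\cC$ — i.e. to the identity $1$-morphism on $\cC$. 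This follows from the self-absorbing property of the regular bimodule under relative tensor product, $\cC\bt_\cC\cM\simeq \cM$, applied twice. The second triangle identity for $\cC^{\mop}$ is obtained by the symmetric computation, using the canonical identification $(\cC^{\mop})^{\mop}\simeq\cC$ and reflecting the roles of $\ev$ and $\coev$.

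The main obstacle, and the only place demanding genuine care, is that these identities must be established not as equalities but as coherent invertible $2$-morphisms in the $(\infty,3)$-category $\alg_1(\cS)$, with the associators of the relative tensor product tracked so that the coherence data assemble correctly; this is where one must lean on the rigorous model (iterated complete Segal spaces, in the unpointed variant of Haugseng) rather than the informal sketch of objects and morphisms. I would argue that the requisite equivalences are precisely the unit constraints of the bar-construction computing the relative tensor product, so that the triangle identities hold coherently by the general theory of the Morita category rather than by an ad hoc check. Since $1$-dualizability requires only that the object have a dual (no adjointability of higher morphisms is needed at this level), once both zig-zags are produced as invertible $2$-morphisms the proposition follows.
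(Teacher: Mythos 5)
The paper states this proposition without proof, treating it as the well-known fact that any algebra object is Morita-dualizable with dual its opposite, so there is nothing to compare against; your argument is the standard verification and is essentially correct. The one place to tighten is the intermediate formula $\cC\bt_{\cC^e}(\cC\bt\cC)\bt_{\cC^e}\cC$: the composite $(\id_\cC\bt\ev)\circ(\coev\bt\id_\cC)$ is a \emph{single} relative tensor product $(\cC\bt\cC)\bt_{\cC\bt\cC^{\mop}\bt\cC}(\cC\bt\cC)$ in which the three factors of the middle algebra act on \emph{interleaved} tensor factors of the two bimodules (this interleaving is exactly what makes the answer collapse to the regular bimodule $\cC$ rather than to a Hochschild-type object such as $\cC\bt_{\cC^e}\cC$), after which your appeal to the unit property $\cC\bt_\cC\cM\simeq\cM$ and to the ambient coherence machinery of the Morita $(\infty,3)$-category finishes the argument as you describe.
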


It turns out that a bimodule having a left adjoint or a right adjoint depends only on the left action or the right action respectively, as explained in the following definition and proposition.

\begin{definition}
Let $\cC$ be an $E_1$-algebra in $\cS$.  A right (resp. left) $\cC$-module $\cM$ is called \defterm{dualizable} if it has a right (resp. left) adjoint as a $(\cC,\un_\cS)$ bimodule (resp. $(\un_\cS,\cC)$ bimodule).
\end{definition}

\begin{proposition}[{\cite[Proposition 4.6.2.13]{Lurie}}]\label{prop:E1Hom1adjoints} A 1-morphism $M\colon\cC\to\cD$ in $\alg_1(\cS)$ has a right (resp. left) adjoint if, and only if, $M$ is dualizable as a $\cD$-module (resp. $\cC$-module).
\end{proposition}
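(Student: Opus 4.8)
The plan is to work in the homotopy $2$-category of $\alg_1(\cS)$, since an adjunction between $1$-morphisms is detected by unit and counit $2$-morphisms satisfying the triangle identities up to homotopy, and such bicategorical adjunction data extends essentially uniquely to a coherent adjunction in the $(\infty,2)$-categorical sense. I would treat only the statement for right adjoints and dualizability as a $\cD$-module; the statement for left adjoints and $\cC$-modules then follows by applying this case after reflecting the multiplication, i.e. passing to the opposites $(-)^{\mop}$. The reformulation to keep in mind is that ``$M$ is dualizable as a right $\cD$-module'' says precisely that the underlying $(\un_\cS,\cD)$-bimodule $\bar M$, viewed as a $1$-morphism $\un_\cS\to\cD$, admits a right adjoint in $\alg_1(\cS)$; writing $\eta\colon\un_\cS\to\cC$ for the unit of $\cC$ and $\cC$ for the regular $(\un_\cS,\cC)$-bimodule, precomposition with $\cC$ sends $M$ to $\bar M\simeq \cC\otimes_\cC M$.

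For the ``only if'' direction, suppose $N$ is a right adjoint of $M$, with unit $u\colon\cC\to M\otimes_\cD N$ of $(\cC,\cC)$-bimodules and counit $c\colon N\otimes_\cC M\to\cD$ of $(\cD,\cD)$-bimodules. I would forget the outer $\cC$-actions explicitly: precomposing $u$ with $\eta$ gives $\bar u\colon\un_\cS\to \bar M\otimes_\cD\bar N$, where $\bar N$ is $N$ with its right $\cC$-action restricted along $\eta$, and composing the canonical map $\bar N\otimes\bar M\to N\otimes_\cC M$ from the absolute to the relative tensor product with $c$ gives $\bar c\colon\bar N\otimes\bar M\to\cD$. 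The triangle identities for $(\bar u,\bar c)$ then follow from those for $(u,c)$ by naturality of these comparison maps, so $\bar M$ is dualizable as a right $\cD$-module.

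The substance is in the ``if'' direction. Assuming $M$ is dualizable as a right $\cD$-module, the abstract dual is computed by the internal hom $N:=\IHom_\cD(M,\cD)$ of right $\cD$-modules; the role of the dualizability hypothesis is exactly to identify $M^\vee$ with this internal hom together with its evaluation and coevaluation. The residual left $\cC$-action on $M$ commutes with the right $\cD$-action, hence induces by precomposition a right $\cC$-action on $N$, making $N$ a $(\cD,\cC)$-bimodule and thus a $1$-morphism $\cD\to\cC$. I would then promote the module-level data: the evaluation $\ev\colon N\otimes M\to\cD$ coequalizes the two $\cC$-actions, since $(n\cdot a)(m)=n(a\cdot m)$ for $a\in\cC$, and therefore descends to a map of $\cD$-bimodules $c\colon N\otimes_\cC M\to\cD$; dually, the coevaluation $\coev\colon\un_\cS\to M\otimes_\cD N$ is upgraded to a map of $(\cC,\cC)$-bimodules $u\colon\cC\to M\otimes_\cD N$ using the universal property of the internal hom. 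Checking that $(u,c)$ satisfy the triangle identities then reduces to the triangle identities for $(\coev,\ev)$, because the added $\cC$-structure is freely carried along and the identities are equalities of maps of underlying objects.

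The main obstacle is making the identification $M^\vee\simeq\IHom_\cD(M,\cD)$, the induced bimodule structure on $N$, and the descent and upgrade of the adjunction data \emph{homotopy-coherently} rather than merely on homotopy categories. This is exactly where I would lean on the reduction in the first paragraph: because an adjunction of $1$-morphisms is equivalent to a bicategorical adjunction, it suffices to produce $u$ and $c$ and verify the two triangle identities up to homotopy, which are $2$-categorical equations, after which all higher coherences are automatic. The remaining care is in handling the bar-construction model of the relative tensor products $\otimes_\cD$ and $\otimes_\cC$ so that the comparison maps used above are genuinely natural, which is the technical heart of the argument.
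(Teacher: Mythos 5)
The paper does not prove this proposition at all: it is quoted verbatim from Lurie's \emph{Higher Algebra}, Proposition 4.6.2.13, so there is no in-paper argument to compare against. Your sketch follows essentially the same strategy as Lurie's proof of that result: reduce adjointability to unit/counit data satisfying the triangle identities in the homotopy $2$-category (a legitimate reduction, since bicategorical adjunctions extend to homotopy-coherent ones), obtain the ``only if'' direction by restricting the unit and counit along $\un_\cS\to\cC$, and in the ``if'' direction take the candidate adjoint to be the $\cD$-linear dual $N\simeq\IHom_\cD(M,\cD)$ with the right $\cC$-action induced from the left $\cC$-action on $M$. The outline is correct. The one place where your write-up is thinner than it should be is the promotion of $\coev$ to a $(\cC,\cC)$-bimodule map $u\colon\cC\to M\otimes_\cD N$ and the verification of the triangle identities: these are equalities of $2$-morphisms in $\alg_1(\cS)$, i.e.\ of bimodule maps between \emph{relative} tensor products, and cannot literally be checked ``on underlying objects,'' since a bimodule map whose underlying map is homotopic to the identity need not be the identity as a bimodule map. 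The clean fix, which is implicitly what your appeal to the universal property of the internal hom amounts to, is to use the equivalence $M\otimes_\cD N\simeq\End_\cD(M)$ furnished by dualizability and define $u$ as the $E_1$-algebra map $\cC\to\End_\cD(M)$ encoding the left action; the first triangle identity then becomes the unitality of the $\cC$-action on $M$, and the second follows similarly. With that adjustment your argument is a faithful rendering of the standard proof.
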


In order to give a complete characterization of 2-dualizable objects in $\alg_1(\cS)$, we first recall the following result (see \cite[Proposition 4.2.3]{Lurie2009} and \cite[Theorem 3.9]{Pstragowski2014}) which reduces 2-dualizability to a finite number of conditions.

\begin{theorem}\label{thm:easy2dualizability}
A 1-dualizable object $X$ of a symmetric monoidal 2-category is $2$-dualizable if, and only if, the evaluation and coevaluation maps each admit a right adjoint.
\end{theorem}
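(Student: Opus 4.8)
The plan is to treat the two implications separately, with essentially all the content in the converse. For the forward direction I would argue directly from the definition: if $X$ is $2$-dualizable it sits inside a symmetric monoidal sub-$2$-category $\cD$ that has duals. Since $\cD$ contains $X$ and has duals for objects, it contains a dual of $X$ together with the accompanying duality data, and because duals and their unit/counit maps are unique up to canonical equivalence these agree in $\cC$ with $\ev$ and $\coev$. Every $1$-morphism of $\cD$ has both adjoints, and adjointability is stable under equivalence, so $\ev$ and $\coev$ acquire (two-sided, in particular right) adjoints in $\cC$.

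For the converse I would proceed in two steps. First I would upgrade the hypothesised right adjoints to two-sided adjoints using the duality involution $D=(-)^\vee$ of the symmetric monoidal structure. This $D$ reverses the direction of $1$-morphisms while preserving that of $2$-morphisms, so it interchanges left and right adjoints: from $f\dashv g$ one obtains $D(g)\dashv D(f)$. Feeding the structure maps into the transpose formula identifies $D(\ev)\simeq\coev$ and $D(\coev)\simeq\ev$, up to the symmetry and double-dual equivalences, so a right adjoint of $\ev$ produces a left adjoint of $\coev$ and vice versa; together with the hypothesis this gives $\ev$ and $\coev$ both adjoints.

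Second, I would assemble a sub-$2$-category with duals containing $X$. I would take as objects the tensor powers $X^{\otimes a}\otimes(X^\vee)^{\otimes b}$, whose duals (obtained by exchanging $X$ and $X^\vee$) again lie among them, and as $1$-morphisms those generated under composition and $\otimes$ by $\ev$, $\coev$ (whiskered with identities) together with the associativity, unit and symmetry equivalences, retaining all $2$-morphisms of $\cC$ between them. Since adjointability is closed under composition, under $\otimes$ with an identity, and under whiskering by equivalences, and equivalences are two-sided adjointable, the first step makes every $1$-morphism here adjointable on both sides; as $k=2$ imposes no adjointness condition on the top $2$-morphisms, this sub-$2$-category has duals, witnessing $2$-dualizability of $X$.

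I expect the main obstacle to be the first step: pinning down the equivalence $D(\ev)\simeq\coev$ and the resulting exchange of adjoints forces one to track the coherence of the symmetric monoidal structure and the zigzag identities at the $2$-morphism level, and in the $(\infty,2)$-categorical model to produce the adjunctions as coherent data rather than as bare existence statements. This is precisely the bookkeeping handled in \cite[Proposition 4.2.3]{Lurie2009} and \cite[Theorem 3.9]{Pstragowski2014}, which I would invoke rather than reprove.
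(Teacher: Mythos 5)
First, a point of comparison: the paper does not prove this statement at all --- it is recalled with citations to \cite[Proposition 4.2.3]{Lurie2009} and \cite[Theorem 3.9]{Pstragowski2014} --- so your closing decision to invoke those references is exactly what the paper does. Your forward direction is fine, and the first step of your converse (using the duality involution to convert the hypothesised right adjoints of $\ev$ and $\coev$ into two-sided adjoints of both, via $f\dashv g \Rightarrow D(g)\dashv D(f)$ and $D(\ev)\simeq\coev$) is correct and is indeed the first half of the standard argument.

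The gap is in your second step, and it is not the coherence bookkeeping you single out as the main obstacle. For your sub-$2$-category to ``have duals,'' every $1$-morphism in it must admit left and right adjoints \emph{lying in that sub-$2$-category}, and those adjoints must themselves be adjointable there, and so on. Your generating set ($\ev$, $\coev$, whiskerings, structural equivalences) is not visibly closed under taking adjoints: $\ev^R$ is a $1$-morphism $\un\to X\otimes X^\vee$ which is not a priori a composite of $\ev$'s, $\coev$'s and equivalences, and enlarging the generating set to include $\ev^R$, $\ev^L$, $\coev^R$, $\coev^L$ only pushes the problem up the tower --- you would then need $\ev^R$ to have a right adjoint, i.e.\ $\ev^{RR}$ to exist, and nothing in your hypotheses or in step one provides this. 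The substantive content of the cited results is precisely the device that closes this loop: one shows that $\ev^R$ agrees, up to the symmetry, with $\coev$ twisted by an invertible $1$-morphism of $X$ (the Serre automorphism, built from $\ev^R$ and the braiding, with inverse built from $\ev^L$). Hence $\ev^R$ is a composite of an equivalence with $\coev$, therefore again has two-sided adjoints, and the entire tower of iterated adjoints remains inside the generated subcategory. Without this (or an equivalent argument) your step two does not close up, so the sketch as written does not establish the converse; if you are content to cite \cite{Lurie2009} and \cite{Pstragowski2014}, as the paper does, you should attribute to them this closure argument rather than only the mate coherences.
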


We may finally state the following well-known characterization of 2-dualizable objects in $\alg_1(\cS)$.

\begin{theorem}
An $E_1$-algebra $\cC\in\alg_1(\cS)$ is 2-dualizable if, and only if, it is dualizable as an object of $\cS$, and as a $\cC^e$-module.
\label{thm:2dualizabilityE1}
\end{theorem}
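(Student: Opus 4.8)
The plan is to assemble the three results just recalled into a short chain of equivalences. By \cref{thm:1dualizabilityE1} every $E_1$-algebra $\cC$ is automatically 1-dualizable, with dual $\cC^{\mop}$ and with the explicit regular-bimodule evaluation and coevaluation displayed there. Consequently \cref{thm:easy2dualizability} applies verbatim and reduces the 2-dualizability of $\cC$ to the single demand that both $\ev$ and $\coev$ admit right adjoints in $\alg_1(\cS)$. The whole task then becomes to reinterpret each of these two adjointability conditions through \cref{prop:E1Hom1adjoints}, and to read off that the two resulting conditions are exactly the two in the statement.

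First I would handle the evaluation. Here $\ev$ is the regular bimodule $\cC$, viewed as a 1-morphism $\cC^{\mop}\bt\cC\to\un_\cS$ whose \emph{target} algebra is the unit $\un_\cS$. Applying \cref{prop:E1Hom1adjoints} in its ``right adjoint'' form (which concerns dualizability over the target), $\ev$ admits a right adjoint if and only if it is dualizable as a $\un_\cS$-module. Since a $\un_\cS$-module is nothing but an object of $\cS$, and the underlying object here is $\cC$ itself, this is precisely the condition that $\cC$ be dualizable as an object of $\cS$.

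Next I would handle the coevaluation. Here $\coev$ is the regular bimodule $\cC$, viewed as a 1-morphism $\un_\cS\to\cC\bt\cC^{\mop}=\cC^e$, now with target algebra $\cC^e$. Again by \cref{prop:E1Hom1adjoints}, $\coev$ admits a right adjoint if and only if it is dualizable as a (right) $\cC^e$-module, which is exactly the second condition in the statement. Combining both equivalences with the reduction supplied by \cref{thm:easy2dualizability} gives the theorem in both directions at once.

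The argument is deliberately short because both substantive inputs are imported: the reduction of 2-dualizability to adjointability of the duality data, and the translation of adjointability into module-dualizability over the target algebra. I therefore expect no genuine mathematical obstacle. The only point demanding care is keeping the conventions straight—confirming that the module structures relevant to \cref{prop:E1Hom1adjoints} are the trivial $\un_\cS$-module structure for $\ev$ and the regular right $\cC^e$-module structure for $\coev$, and that ``right adjoint'' is matched with ``dualizable over the target'' rather than the source. This bookkeeping of left-versus-right and of the direction of the duality data is the sole place where a slip could occur.
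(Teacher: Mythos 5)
Your proposal is correct and follows exactly the paper's own argument: recall the 1-dualizability data from \cref{thm:1dualizabilityE1}, reduce 2-dualizability to right-adjointability of $\ev$ and $\coev$ via \cref{thm:easy2dualizability}, and translate each adjointability condition into dualizability over the target algebra ($\un_\cS$ and $\cC^e$ respectively) via \cref{prop:E1Hom1adjoints}. The bookkeeping of targets and right adjoints is handled correctly, so there is nothing to add.
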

\begin{proof}
We recall from \cref{thm:1dualizabilityE1} the 1-dualizability data $\cC^\vee$, $\ev$, $\coev$.  By \cref{thm:easy2dualizability}, $\cC$ is 2-dualizable if, and only if, $\ev$ and $\coev$ both admit right adjoints. Then by \cref{prop:E1Hom1adjoints}, these right adjoints exist if, and only if, $\cC$ is dualizable as a $\un_\cS$-module (i.e. as an object of $\cS$), and as a $\cC^e$-module.
\end{proof}

Now we turn to 2- and 3-dualizability of $E_2$-algebras. In particular, we prove the $n=2$ case of  \cref{conj:Lurie-dualizability}).

\begin{lemma}
If $\cC$ is an $E_1$-algebra, there is a canonical equivalence of $E_2$-algebras
\begin{equation}
\rZ_1(\cC^{\mop}) \cong \rZ_1(\cC)^{\mop}.
\label{eq:mopDrinfeldCenter}
\end{equation}
\end{lemma}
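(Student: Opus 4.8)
The plan is to reduce the claimed equivalence to the explicit description $\rZ_1(\cC)=\End_{\U_\cC^{E_1}}(\cC)$ with $\U_\cC^{E_1}\cong\cC\bt\cC^{\mop}$, and then to track separately the two $E_1$-multiplications that Dunn--Lurie additivity produces on the $E_2$-algebra $\rZ_1(\cC)$. First I would produce the underlying equivalence together with compatibility for one of the two directions, and then argue that the remaining direction is reflected; combining the two gives the $\mop$ on $\rZ_1(\cC)$.

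Concretely, the first step is the observation that $(\cC^{\mop})^{\mop}\simeq\cC$, so that $\U_{\cC^{\mop}}^{E_1}\cong\cC^{\mop}\bt\cC$. The symmetry (swap) equivalence of the ambient symmetric monoidal $\cS$ gives a canonical equivalence $\cC^{\mop}\bt\cC\xrightarrow{\sim}\cC\bt\cC^{\mop}=\U_\cC^{E_1}$, under which the regular module $\cC^{\mop}$ is carried to the regular module $\cC$. Passing to endomorphism objects, this already yields an equivalence $\rZ_1(\cC^{\mop})\simeq\rZ_1(\cC)$ in $\cS$ that is compatible with the composition ($\End$) $E_1$-structure, i.e. the direction transverse to the multiplication of $\cC$ being centralized.

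It then remains to upgrade this to an equivalence of $E_2$-algebras, matching the remaining $E_1$-structure with its reversal. The conceptual reason is that the $\mop$-operation is induced by the reflection $r\in O(1)$ of the line $\RR^1$ on which $\cC$ lives, and that the other $E_1$-structure on $\rZ_1(\cC)$ --- the one coming from the monoidal structure of $\Mod_\cC^{E_1}$, hence from the multiplication of $\cC$ itself --- is precisely the direction that $r$ reverses. Under the convention that this $\cC$-direction is the $x$-direction of the resulting $E_2$-algebra, reflecting it is by definition the passage $\cC\mapsto\cC^{\mop}$, so the equivalence of the previous step intertwines the full $E_2$-structures and lands on $\rZ_1(\cC)^{\mop}$. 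One may sanity-check this in $\cS=\LFP$ using the explicit Drinfeld-center model: an object $(Z,\gamma)$ with half-braiding $\gamma$ for $\cC$ corresponds to $(Z,\gamma^{-1})$ for $\cC^{\mop}$, the tensor product is reversed, and the braiding is sent to its inverse --- exactly the data of $\rZ_1(\cC)^{\mop}$.

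The main obstacle is promoting the evident underlying equivalence --- and the matching of the composition direction --- to a genuine equivalence of $E_2$-algebras, since this requires the naturality of the $E_n$-center under the reflection symmetries of the framed little-disks operad and functoriality of Dunn--Lurie additivity, rather than a bare computation on objects. In particular one must verify which axis $r$ reflects, so as to land on $\mop$ rather than $\bop$; the two outcomes are in any case identified by the canonical equivalence \eqref{eq:mopbop}, but pinning down $\mop$ on the nose is the delicate point.
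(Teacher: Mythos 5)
The paper states this lemma without proof, so there is no argument of record to compare yours against; judged on its own terms, your outline is sound and is essentially the argument one would expect. The three substantive points are all correct: the swap equivalence $\U_{\cC^{\mop}}^{E_1}\cong\cC^{\mop}\bt\cC\cong\cC\bt\cC^{\mop}=\U_{\cC}^{E_1}$ carries the regular module to the regular module, so the composition $E_1$-structure on $\End_{\U^{E_1}_{\cC^{\mop}}}(\cC^{\mop})$ matches that on $\End_{\U^{E_1}_{\cC}}(\cC)$; the identification of $\cC^{\mop}$-bimodules with $\cC$-bimodules satisfies $M\otimes_{\cC^{\mop}}N\cong N\otimes_{\cC}M$, so the second Dunn--Lurie $E_1$-direction is reversed; and one preserved direction plus one reversed direction is exactly an opposite of the $E_2$-structure, with the residual $\mop$-versus-$\bop$ ambiguity absorbed by \eqref{eq:mopbop} (which is all that is ever used downstream, e.g.\ in the construction of the right adjoint in the theorem following the lemma, where the central structure is routed through $\rZ_1(\cC)^{\bop}\cong\rZ_1(\cC)^{\mop}\cong\rZ_1(\cC^{\mop})$). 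You correctly flag the one genuinely delicate step --- promoting the object-level identification to an equivalence of $E_2$-algebras --- and your proposed mechanism (naturality of the centralizer construction under the reflection automorphism of the $E_1$-operad, together with functoriality of additivity) is the right one, though as written it is a plan rather than a verification; the $\LFP$ sanity check via $(Z,\gamma)\mapsto(Z,\gamma^{-1})$ on Drinfeld centers is correct and matches the paper's explicit description of $\cA^{\mop}$ for braided $\cA$.
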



\begin{theorem}[See {\cite[Section 4]{Gwilliam2018}}]
Let $\cC\colon \cA\rightarrow \cB$ be a 1-morphism in $\alg_2(\cS)$ given by an $E_1$-algebra $\cC$ equipped with an $(\cA, \cB)$-central structure $\cA\bt \cB^{\bop}\rightarrow \rZ_1(\cC)$. Its right adjoint is $\cC^{\mop}$ equipped with a $(\cB, \cA)$-central structure via the composite
\[\cB\bt \cA^{\bop}\longrightarrow \rZ_1(\cC)^{\bop}\cong \rZ_1(\cC)^{\mop}\cong \rZ_1(\cC^{\mop}),\]
where the penultimate equivalence is given by a $180$-degree clockwise rotation \eqref{eq:mopbop}, and the last equivalence is given by \eqref{eq:mopDrinfeldCenter}. The unit of the adjunction is $\cC$ viewed as $(\cA, \cA)$-centered $(\cA, \cC\bt_\cB \cC^{\mop})$-bimodule. The counit of the adjunction is $\cC$ viewed as a $(\cB, \cB)$-centered $(\cC^{\mop}\bt_\cA \cC, \cB)$-bimodule.
\label{thm:E2Hom1adjoints}
\end{theorem}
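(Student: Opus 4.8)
The plan is to treat this as the relative version, fibered over $\cA$ and $\cB$, of the automatic $1$-dualizability in $\alg_1(\cS)$ recorded in \cref{thm:1dualizabilityE1}: just as an $E_1$-algebra is $1$-dualizable with dual its opposite algebra, realized through the regular bimodule, here the $1$-morphism $\cC$ will admit the opposite algebra $\cC^{\mop}$ as a right adjoint, with unit and counit built from the unit and multiplication of $\cC$. All of the underlying bimodule data is already produced by the general Morita-theoretic adjointability of \cite[Section 4]{Gwilliam2018}; the genuine work is in promoting this $\alg_1(\cS)$-level data to honest morphisms and $2$-morphisms of $\alg_2(\cS)$ by equipping everything with the correct central and centered structures, and in checking the triangle identities at this decorated level. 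I would therefore organize the proof around importing that machinery and then bookkeeping the decorations.

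First I would construct the candidate adjoint. Its underlying $E_1$-algebra is $\cC^{\mop}$, and to upgrade it to a $1$-morphism $\cB\to\cA$ I must supply a $(\cB,\cA)$-central structure, that is a map $\cB\bt\cA^{\bop}\to\rZ_1(\cC^{\mop})$. I obtain this from the given $(\cA,\cB)$-central structure $\cA\bt\cB^{\bop}\to\rZ_1(\cC)$ by applying the reflection $\bop$, which yields a map out of $(\cA\bt\cB^{\bop})^{\bop}\cong\cB\bt\cA^{\bop}$ into $\rZ_1(\cC)^{\bop}$, and then composing with the canonical equivalences $\rZ_1(\cC)^{\bop}\cong\rZ_1(\cC)^{\mop}$ coming from the $180$-degree rotation \eqref{eq:mopbop} and $\rZ_1(\cC)^{\mop}\cong\rZ_1(\cC^{\mop})$ coming from \eqref{eq:mopDrinfeldCenter}. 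This is exactly the composite displayed in the statement.

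Next I would identify the unit and counit as decorated bimodules. Since $\id_\cA$ is the regular $E_1$-algebra $\cA$ and the composite $\cC^{\mop}\circ\cC$ is the relative tensor product $\cC\bt_\cB\cC^{\mop}$, the unit is a $2$-morphism from $\cA$ to $\cC\bt_\cB\cC^{\mop}$; I take it to be $\cC$ itself, with its left regular $\cA$-action, through the unit together with the central structure, and its right $\cC\bt_\cB\cC^{\mop}$-action by left and right multiplication, the $(\cA,\cA)$-centering being read off from the $E_2$-structure. Dually, the counit is $\cC$ regarded as a $(\cB,\cB)$-centered $(\cC^{\mop}\bt_\cA\cC,\cB)$-bimodule, with the right $\cB$-action factoring through the multiplication $\cC^{\mop}\bt_\cA\cC\to\cB$. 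I would then verify the two triangle identities: each composite of unit and counit, formed by relative tensor product over $\cC\bt_\cB\cC^{\mop}$ respectively over $\cC^{\mop}\bt_\cA\cC$, reduces to the canonical equivalence $\cC\bt_\cC\cC\simeq\cC$, that is, to the associativity and unitality of $\cC$, now tracked compatibly with the centered structures.

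The main obstacle is coherence bookkeeping rather than any single hard computation: I must ensure that the chain of canonical equivalences—$\bop$ versus $\mop$, the clockwise and counterclockwise $180$-degree rotations of \eqref{eq:mopbop}, and \eqref{eq:mopDrinfeldCenter}—are mutually compatible, so that the adjunction data assembles into genuine morphisms of $\alg_2(\cS)$ and the triangle identities hold as equivalences of $(\cA,\cA)$- and $(\cB,\cB)$-centered bimodules, and not merely of the underlying bimodules in $\cS$. This fibered coherence is precisely what distinguishes the statement from the non-relative \cref{thm:1dualizabilityE1}, and is where I would lean most heavily on the explicit model of $\alg_2(\cS)$ and the calculus of mates (see \cref{rem:mates}).
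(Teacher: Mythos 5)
The paper does not actually prove this statement: it is quoted as a known result, with the proof deferred to \cite[Section 4]{Gwilliam2018}, and the text surrounding it (e.g.\ the remark about clockwise versus counterclockwise rotation) only records the output of that construction. Your outline is the standard route and is consistent with what the cited reference does --- exhibit $\cC^{\mop}$ with the rotated central structure, take the unit and counit to be the regular bimodule $\cC$ with the decorations listed in the statement, and reduce the triangle identities to $\cC\bt_\cC\cC\simeq\cC$ --- so in substance you are reconstructing the deferred proof rather than diverging from the paper. Two caveats. First, your description of the counit is garbled: you say its right $\cB$-action ``factors through the multiplication $\cC^{\mop}\bt_\cA\cC\to\cB$,'' but no such algebra map exists in general; the right $\cB$-action on $\cC$ comes from the central structure $\cB\to\rZ_1(\cC)$ followed by the forgetful map to $\cC$ and multiplication, while it is the \emph{left} action of $\cC^{\mop}\bt_\cA\cC$ that is given by two-sided multiplication. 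Second, the step you flag as ``coherence bookkeeping'' --- verifying that these decorated bimodules really witness an adjunction internal to the iterated complete Segal space model of $\alg_2(\cS)$, including that the relative tensor products computing the composites $\cC^{\mop}\circ\cC$ and $\cC\circ\cC^{\mop}$ carry the claimed central structures and that the triangle identities hold at the level of $(\cA,\cA)$- and $(\cB,\cB)$-centered bimodules --- is not bookkeeping but the entire technical content of \cite[Section 4]{Gwilliam2018}; a complete proof cannot avoid engaging with that model, and your sketch leaves this untouched. As a roadmap it is correct and aligned with the source the paper relies on, but it should be read as an outline of that reference rather than an independent argument.
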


\begin{remark}
The right adjoint constructed above used clockwise rotation, the left adjoint would use counterclockwise rotation.
\end{remark}

\begin{theorem}[See {\cite[Section 4]{Gwilliam2018}}]
Every $E_2$-algebra $\cA\in\alg_2(\cS)$ is 2-dualizable with dual $\cA^\vee = \cA^{\bop}$, and with evaluation and coevalation given by the regular central algebra,
\begin{itemize}
\item $\ev$ is $\cA$ as a $(\cA^{\bop}\bt\cA,\un_\cS)$-central algebra,
\item $\coev$ is $\cA$ as a $(\un_\cS,\cA\bt\cA^{\bop})$-central algebra.
\end{itemize}
The right adjoints to evaluation and coevaluation are given by
\begin{itemize}
\item $\ev^R$ is $\cA^{\mop}$ as a $(\un_\cS,\cA^{\bop}\bt\cA)$-central algebra,
\item $\coev^R$ is $\cA^{\mop}$ as a $(\cA\bt\cA^{\bop},\un_\cS)$-central algebra,
\end{itemize}
as in \cref{thm:E2Hom1adjoints}. Their unit and counit morphisms are given by:
\begin{itemize}
\item $\eta_{\coev}$ is $\cA$ as a $(\un_\cS, \HC(\cA))$-bimodule.

\item $\epsilon_{\coev}$ is $\cA$ as a $(\cA\bt\cA^{\bop}, \cA\bt\cA^{\bop})$-centered $(\cA^{\mop}\bt \cA, \cA\bt \cA)$-bimodule.

\item $\eta_{\ev}$ is $\cA$ as a $(\cA^{\bop}\bt\cA, \cA^{\bop}\bt\cA)$-centered $(\cA\bt \cA, \cA\bt \cA^{\mop})$-bimodule.

\item $\epsilon_{\ev}$ is $\cA$ as an $(\HC(\cA), \un_\cS)$-bimodule.
\end{itemize}
\label{thm:2dualizabilityE2}
\end{theorem}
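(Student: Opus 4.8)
The plan is to establish the theorem in two stages: first $1$-dualizability, exhibiting $\cA^{\bop}$ as a dual with the stated $\ev$ and $\coev$; and then $2$-dualizability, by producing right adjoints to $\ev$ and $\coev$ and invoking \cref{thm:easy2dualizability}. The explicit formulas for the right adjoints and for the units and counits are then read off by specializing the general adjunction of \cref{thm:E2Hom1adjoints} to these two particular central algebras.

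For $1$-dualizability I would take $\cA^\vee = \cA^{\bop}$, with $\ev$ the regular central algebra $\cA$ viewed as an $(\cA^{\bop}\bt\cA, \un_\cS)$-central algebra and $\coev$ the regular central algebra $\cA$ viewed as a $(\un_\cS, \cA\bt\cA^{\bop})$-central algebra. One must then verify the two zigzag identities, i.e. that the composites
\[ \cA \xrightarrow{\coev\,\bt\,\id} \cA\bt\cA^{\bop}\bt\cA \xrightarrow{\id\,\bt\,\ev} \cA, \qquad \cA^{\bop}\xrightarrow{\id\,\bt\,\coev}\cA^{\bop}\bt\cA\bt\cA^{\bop}\xrightarrow{\ev\,\bt\,\id}\cA^{\bop} \]
are each equivalent to the identity $1$-morphism. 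Since composition of $1$-morphisms is the relative tensor product, this amounts to checking that the relevant relative tensor products of the regular central algebras over $\cA\bt\cA^{\bop}$ (resp. $\cA^{\bop}\bt\cA$) recover the regular $(\cA,\cA)$- (resp. $(\cA^{\bop},\cA^{\bop})$-) central algebra, which is the identity $1$-morphism. This is the $E_2$-analogue of the computation behind \cref{thm:1dualizabilityE1}, carried out one categorical level up, where the relevant relative tensor products are computed among central algebras; the regular central algebra plays the role of the unit for relative tensor product, so the two composites collapse exactly as in the $E_1$ case.

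For $2$-dualizability, the key observation is that \cref{thm:E2Hom1adjoints} already guarantees that \emph{every} $1$-morphism of $\alg_2(\cS)$ admits a right adjoint. In particular $\ev$ and $\coev$ do, so by \cref{thm:easy2dualizability} the object $\cA$ is $2$-dualizable. To obtain the explicit descriptions I would specialize the formula of \cref{thm:E2Hom1adjoints}: applied to $\ev$ (the $E_1$-algebra $\cA$ with its $(\cA^{\bop}\bt\cA, \un_\cS)$-central structure) it produces $\cA^{\mop}$ with the $(\un_\cS, \cA^{\bop}\bt\cA)$-central structure obtained from the clockwise rotation of \eqref{eq:mopbop} followed by \eqref{eq:mopDrinfeldCenter}, which is exactly the stated $\ev^R$; the same specialization applied to $\coev$ yields the stated $\coev^R$. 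The units and counits are likewise the instances of the unit and counit of \cref{thm:E2Hom1adjoints} for these two central algebras, and one checks directly that they carry the centered-bimodule structures listed in the theorem.

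The main obstacle, and the conceptual content, is the appearance of $\HC(\cA)$ in $\eta_{\coev}$ and $\epsilon_{\ev}$. Unwinding \cref{thm:E2Hom1adjoints}, the unit of $\coev\dashv\coev^R$ is a $2$-morphism from $\id_{\un_\cS}$ to the composite $\coev^R\circ\coev$, and this composite is precisely the relative tensor product $\cA\bt_{\cA\bt\cA^{\bop}}\cA^{\mop}$, which by definition is the enveloping algebra $\U_\cA^{E_2}=\HC(\cA)$. Thus the real work is to recognize this relative composite as $\HC(\cA)$ and to identify the witnessing bimodule as $\cA$ regarded as a $(\un_\cS, \HC(\cA))$-bimodule; the analogous identification for $\epsilon_{\ev}$ follows symmetrically from $\cA^{\mop}\bt_{\cA^{\bop}\bt\cA}\cA\simeq\HC(\cA)$. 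Careful bookkeeping of the $\mop$ versus $\bop$ reflections and of the clockwise versus counterclockwise rotations relating them is needed throughout to match the centered-bimodule structures asserted for $\epsilon_{\coev}$ and $\eta_{\ev}$, and this is where I would expect the argument to demand the most care.
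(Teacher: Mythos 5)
The paper offers no proof of this theorem, attributing it to \cite[Section 4]{Gwilliam2018}, and your outline is precisely the intended derivation: $1$-dualizability by the standard cancellation of regular (central) bimodules, $2$-dualizability from \cref{thm:easy2dualizability} because \cref{thm:E2Hom1adjoints} supplies right adjoints to \emph{all} $1$-morphisms of $\alg_2(\cS)$, and the explicit adjoints, units, and counits by specializing \cref{thm:E2Hom1adjoints} to $\ev$ and $\coev$, with the identification $\coev^R\circ\coev\simeq \cA\bt_{\cA\bt\cA^{\bop}}\cA^{\mop}=\HC(\cA)$ (and its mirror for $\epsilon_{\ev}$) accounting for the appearance of $\HC(\cA)$. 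Your bookkeeping of the sources and targets (e.g.\ that $\cA^{\bop}\bt\cA$ has underlying $E_1$-algebra $\cA\bt\cA$, explaining the asserted $(\cA^{\mop}\bt\cA,\cA\bt\cA)$- and $(\cA\bt\cA,\cA\bt\cA^{\mop})$-bimodule structures) checks out, so the proposal is correct and takes the same route.
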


Next, we have the following analog of \cref{prop:E1Hom1adjoints} establishing dualizability for $2$-morphisms in $\alg_2(\cS)$.

\begin{proposition}[{\cite[Proposition 5.17]{Brochier2018}}]
\label{prop:forgetE2E1}
Let $\cC,\cD\colon\cA\to \cB$ be $1$-morphisms in $\alg_2(\cS)$, i.e. $(\cA,\cB)$-central algebras, and let $M\colon\cC\to\cD$ be a 2-morphism in $\alg_2(\cS)$, i.e. an $(\cA,\cB)$-central $(\cC,\cD)$-bimodule.  Then $M$ has a right (resp. left) adjoint in $\alg_2(\cS)$ if, and only if, it has a right (resp. left) adjoint in $\alg_1(\cS)$, when regarded as a $(\cC,\cD)$-bimodule.  The adjoints in $\alg_2(\cS)$ are given by equipping the adjoints in $\alg_1(\cS)$ with canonical central structures.
\end{proposition}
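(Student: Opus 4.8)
The plan is to read the statement as the assertion that the operation of \emph{forgetting the central structure} neither creates nor destroys adjoints at the level of $2$-morphisms, and to reduce it to the corresponding fact one categorical level down, namely \cref{prop:E1Hom1adjoints}. First I would exploit the iterated structure of the Morita category: by construction of $\alg_2(\cS)$, the hom-$(\infty,3)$-category $\Hom_{\alg_2(\cS)}(\cA,\cB)$ is equivalent to $\alg_1(\cT)$, where $\cT$ is the closed monoidal $(\infty,2)$-category of $(\cA,\cB)$-bimodules, i.e. modules over the $E_2$-algebra $\cA\bt\cB^{\bop}$ governing central structures (cf. \cref{sect:Enalgebras}). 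Under this identification a $1$-morphism $\cC\colon\cA\to\cB$ becomes an object of $\alg_1(\cT)$ (an $E_1$-algebra in $\cT$, i.e. a central algebra), a $2$-morphism $M\colon\cC\to\cD$ becomes a $1$-morphism of $\alg_1(\cT)$ (a $(\cC,\cD)$-bimodule in $\cT$), and the forgetful functor $U\colon\cT\to\cS$ induces a functor of $(\infty,3)$-categories $F=\alg_1(U)\colon\alg_1(\cT)\to\alg_1(\cS)$ carrying $M$ to its underlying $(\cC,\cD)$-bimodule. The proposition is then exactly the assertion that $F$ both preserves and reflects adjoints of $1$-morphisms.

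The forward implication is formal. The functor $U$ is monoidal, conservative and colimit-preserving (it is the forgetful functor of a module $(\infty,2)$-category), so $F$ is a well-defined functor of $(\infty,3)$-categories, and any functor sends an adjunction to an adjunction: applying $F$ to the adjoint $2$-morphism together with its unit and counit $3$-morphisms transports the triangle identities to triangle identities, so the underlying bimodule acquires an adjoint in $\alg_1(\cS)$ with no further work.

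For the reverse implication I would apply \cref{prop:E1Hom1adjoints} at both levels, using that Lurie's result holds for $\alg_1$ of any base satisfying our standing hypotheses, hence for $\cT$ as well as for $\cS$. Thus $M$ has a right adjoint in $\alg_1(\cT)$ if and only if it is dualizable as a right $\cD$-module object of $\cT$, while its image $U(M)$ has a right adjoint in $\alg_1(\cS)$ if and only if it is dualizable as a right $\cD$-module object of $\cS$. It therefore suffices to prove that a right $\cD$-module in $\cT$ is dualizable precisely when its image under $U$ is. One direction is again monoidality of $U$; for the other, the candidate dual is the relative internal hom $\IHom_\cD(M,\cD)$, which exists in the closed monoidal $\cT$ and whose evaluation and coevaluation are assembled from relative tensor products. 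Since $U$ preserves these (co)limits it carries the $\cT$-coevaluation to the $\cS$-coevaluation, and since $U$ is conservative the former is an equivalence exactly when the latter is; hence dualizability over $\cD$, and with it adjointability of $M$, is insensitive to the ambient $\cT$ versus $\cS$. The left-adjoint case is identical after exchanging right $\cD$-modules for left $\cC$-modules.

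The main obstacle is precisely this reverse lifting step: showing that the dual module, a priori produced only in $\cS$, genuinely carries the centralizing $\cA\bt\cB^{\bop}$-structure, equivalently that $U$ \emph{reflects} (and not merely preserves) dualizability of one-sided modules. Conservativity together with preservation of the relative tensor products is what closes the gap here; conceptually, the central structure on $M$ is a coherent family of $2$-morphisms commuting with the module actions, and it is transported to the dual by taking mates along the adjunction, with the triangle identities and higher coherences passing through the mate calculus. This is the ingredient whose systematic generalization would be needed to extend the statement to $E_n$-algebras (cf. \cref{rem:mates}).
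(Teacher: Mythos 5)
The paper does not actually prove this proposition: it is imported verbatim from \cite[Proposition 5.17]{Brochier2018}, so there is no internal proof to compare against. Your outline is essentially the right argument, and its decisive step --- producing the $(\cA,\cB)$-centering on the adjoint bimodule by taking mates of the centering on $M$ along the adjunction --- is the same idea that drives the cited proof. Two of your justifications are off as written, though neither is fatal. First, the forgetful functor $U\colon\cT\to\cS$ from $(\cA,\cB)$-bimodules is only \emph{lax} monoidal (the tensor product on $\cT$ is the relative product over $\cA\bt\cB$, not $\bt$), so the well-definedness of $\alg_1(U)$ and the matching of compositions rest not on monoidality but on base-independence of relative tensor products: the bar construction computing $M\ot_\cD N$ in $\cS$ maps to the one computed in $\cT$, and the induced map on realizations is an equivalence because both $\cA\bt\cB$-actions already factor through $\cD$. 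Second, the unit objects of $\cT$ and $\cS$ differ ($\un_\cT=\cA\bt\cB$ versus $\un_\cS$), so $U$ does not literally ``carry the $\cT$-coevaluation to the $\cS$-coevaluation'': you must pass through the free--forgetful adjunction $\Hom_\cT(\cA\bt\cB,X)\simeq\Hom_\cS(\un_\cS,U(X))$ to translate the coevaluations, and descend the $\cS$-evaluation $M^\vee\bt M\to\cD$ along the quotient $M^\vee\bt M\to M^\vee\ot_{\cA\bt\cB}M$ (which it factors through, again by centrality of the $\cA\bt\cB$-actions). With those repairs the triangle identities in $\cT$ follow from those in $\cS$ and your reduction to \cref{prop:E1Hom1adjoints} closes the argument.
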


We recall the following analog of \cref{thm:easy2dualizability}, which reduces 3-dualizability to a finite list of conditions:
\begin{theorem}[{\cite[Proposition 1.2.1]{Araujo2017}}]\label{thm:easy3dualizability} Let $X$ be an object in a symmetric monoidal 3-category $\cC$. Suppose that $X$ has a dual and that the evaluation and coevaluation $1$-morphisms have right adjoints.  Then $X$ is 3-dualizable if, and only if, the unit and counit 2-morphisms witnessing each of these two adjunctions (four maps in total) have right adjoints.
\end{theorem}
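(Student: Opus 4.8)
The plan is to reduce the meaning of 3-dualizability to the finitely many adjointability conditions in the statement by passing to the sub-3-category generated by $X$ and testing ``having duals'' on generators. Recall that $X$ is 3-dualizable exactly when it lies in some full symmetric monoidal sub-$(\infty,3)$-category of $\cC$ with duals, and that it suffices to consider the smallest candidate: the sub-3-category $\langle X\rangle$ generated by $X$ under tensor, composition, and formation of adjoints. By Lurie's definition, $\langle X\rangle$ has duals precisely when (i) every object is dualizable, (ii) every $1$-morphism admits a left and a right adjoint, and (iii) every $2$-morphism admits a left and a right adjoint (the $3$-morphisms need no adjoints, being the top cells under consideration). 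Since each of these properties is closed under composition, tensoring and whiskering, and identities are self-adjoint, it is enough to verify (i)--(iii) on a set of generators.

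Conditions (i) and (ii) are already secured by the hypotheses, which amount to $2$-dualizability of $X$ (cf. \cref{thm:easy2dualizability}). Indeed every object of $\langle X\rangle$ is a tensor power of $X$ and $X^\vee$, hence dualizable; and the generating $1$-morphisms are composites and tensor products of $\id_X$, $\id_{X^\vee}$, $\ev$ and $\coev$. The identities are self-adjoint, $\ev$ and $\coev$ have right adjoints by assumption, and their left adjoints are obtained for free from the symmetric monoidal mates calculus: for a $1$-morphism $f$ between dualizable objects, $f$ admits a left adjoint if and only if the dual morphism $f^\vee$ admits a right adjoint, and dualization interchanges $\ev$ and $\coev$ up to the symmetry $X\otimes X^\vee\simeq X^\vee\otimes X$. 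The only remaining worry at this level is that requiring adjoints of $1$-morphisms generates an a priori infinite tower of iterated adjoints; but $2$-dualizability controls this, since the left and right adjoints of $\ev$ and $\coev$ differ only by the invertible Serre automorphism, so the tower collapses up to equivalence and yields no new conditions.

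The genuinely new content is condition (iii), and I would treat it by exactly the same two-step pattern one categorical level up. The $2$-cells of $\langle X\rangle$ are generated under composition, tensor and whiskering by the units and counits $\eta_{\ev},\epsilon_{\ev},\eta_{\coev},\epsilon_{\coev}$ of the two $1$-morphism adjunctions, so by closure it suffices to produce both adjoints for these four. The hypothesis of the theorem provides their right adjoints; the mates calculus, now relating the left adjoint of a $2$-morphism to the right adjoint of its dual $2$-morphism, then supplies the left adjoints. Because we are in an $(\infty,3)$-category the adjunctions of these $2$-morphisms are witnessed by $3$-morphisms that need no further adjoints, so this is the last layer to verify and the argument terminates. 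The converse direction is formal: if $X$ is $3$-dualizable then all the adjoints in question exist by definition.

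The main obstacle is making the two recurring slogans rigorous and coherent in the $(\infty,3)$-categorical model: the \emph{generation} principle that ``having duals'' for $\langle X\rangle$ can be checked on finitely many generating cells, and the \emph{mates} principle converting right adjoints into left adjoints through duality and the symmetry. Both are transparent in a strict $3$-category but demand careful bookkeeping of higher coherences here, and establishing them is precisely the content of \cite[Proposition 1.2.1]{Araujo2017}; the remaining task is only to check that our explicit $\ev$, $\coev$ and their four units and counits satisfy that proposition's hypotheses.
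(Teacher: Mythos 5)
The paper does not actually prove this statement: it is imported wholesale from \cite[Proposition 1.2.1]{Araujo2017}, so there is no in-paper argument to compare yours against. Your sketch reproduces the expected architecture of the cited proof --- pass to the sub-3-category generated by $X$, use closure of adjointability under composition, tensoring and whiskering to reduce to generating cells, and use the mates/duality calculus to convert the hypothesized right adjoints into the required left adjoints. That is the right strategy, and your treatment of levels (i) and (ii), including the observation that the dual of $\ev$ is $\coev$ up to the symmetry and $X\simeq X^{\vee\vee}$ so that left adjoints come for free, matches the standard argument (cf.\ the proof of \cref{thm:easy2dualizability} in \cite{Lurie2009, Pstragowski2014}).

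There is one place where your justification is not right as stated, and it happens to be where the real content of the cited proposition lives. You claim the 2-morphism layer ``terminates'' because the witnessing 3-morphisms need no adjoints. But the generated sub-3-category must be closed under \emph{taking} adjoints of 2-morphisms: once you adjoin $\eta_{\ev}^R$, $\epsilon_{\ev}^R$, $\eta_{\coev}^R$, $\epsilon_{\coev}^R$, these are new 2-cells which must themselves admit left and right adjoints, and so on up an a priori infinite tower. The fact that 3-morphisms require no adjoints does not collapse this tower; what does is the (nontrivial) fact that the adjoint of a unit or counit of an adjunction $f\dashv f^R$ can be expressed in terms of the units and counits of the adjacent adjunctions $f^R\dashv f^{RR}$, etc., together with the duality identifications you already used at level (ii) --- this is the coherence bookkeeping that \cite{Araujo2017} carries out. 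You do flag at the end that making the generation and mates principles rigorous is exactly the content of the citation, which is consistent with how the paper itself treats the statement, but the specific reason you give for termination at the 2-cell level should be replaced by the argument above.
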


We may finally prove a complete characterization of 3-dualizable objects in $\alg_2(\cS)$.

\begin{theorem}
An $E_2$-algebra $\cA\in\alg_2(\cS)$ is 3-dualizable if, and only if, $\cA$ is dualizable as an object of $\cS$, as an $\cA^e$-module, and as an $\HC(\cA)$-module.
\label{thm:3dualizabilityE2}
\end{theorem}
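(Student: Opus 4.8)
The plan is to deduce the statement by feeding the three preceding results---\cref{thm:2dualizabilityE2}, \cref{prop:forgetE2E1}, and \cref{prop:E1Hom1adjoints}---into the finite reduction of 3-dualizability provided by \cref{thm:easy3dualizability}. Since \cref{thm:2dualizabilityE2} already exhibits the 1-dualizability datum $\cA^\vee=\cA^{\bop}$ together with $\ev,\coev$ and their right adjoints $\ev^R,\coev^R$, the hypotheses of \cref{thm:easy3dualizability} hold, and so $\cA$ is 3-dualizable if, and only if, the four unit/counit 2-morphisms $\eta_{\ev},\epsilon_{\ev},\eta_{\coev},\epsilon_{\coev}$ listed there each admit a right adjoint. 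Everything therefore reduces to analyzing these four adjointability questions.

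First I would lower each question from $\alg_2(\cS)$ to $\alg_1(\cS)$ using \cref{prop:forgetE2E1}: a 2-morphism has a right adjoint in $\alg_2(\cS)$ precisely when it does after forgetting its central structure and being regarded as a bimodule. Then \cref{prop:E1Hom1adjoints} turns each of these into a one-sided module-dualizability condition on $\cA$, the relevant algebra being the target $E_1$-algebra of the bimodule. Reading the four bimodule structures directly from \cref{thm:2dualizabilityE2} yields: from $\epsilon_{\ev}$, the $(\HC(\cA),\un_\cS)$-bimodule, dualizability of $\cA$ as an object of $\cS$; from $\eta_{\coev}$, the $(\un_\cS,\HC(\cA))$-bimodule, dualizability as an $\HC(\cA)$-module; from $\eta_{\ev}$, the $(\cA\bt\cA,\cA\bt\cA^{\mop})$-bimodule, dualizability as a $\cA\bt\cA^{\mop}=\cA^e$-module; and from $\epsilon_{\coev}$, the $(\cA^{\mop}\bt\cA,\cA\bt\cA)$-bimodule, dualizability as a $\cA\bt\cA$-module.

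The one step beyond bookkeeping, and the part I expect to be the main obstacle, is to see that this fourth condition is redundant, so that exactly the three stated conditions survive. Here I would use that the $E_2$-structure makes $\cA$ equivalent to $\cA^{\mop}$ as $E_1$-algebras: the equivalence $\cA^{\mop}\simeq\cA^{\bop}$ of \eqref{eq:mopbop}, combined with the fact that the $\bop$-reflection preserves the $x$-direction multiplication so that $\cA^{\bop}$ has the same underlying $E_1$-algebra as $\cA$, gives $\cA\simeq\cA^{\mop}$ at the level of $E_1$-algebras via the braiding. Boxing $\id_\cA$ on the left with this equivalence on the right factor produces an equivalence of $E_1$-algebras $\cA\bt\cA\simeq\cA\bt\cA^{\mop}=\cA^e$, and since module-dualizability is invariant under equivalences of the acting algebra, the $\cA\bt\cA$-condition coincides with the $\cA^e$-condition. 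The delicate point is to confirm that this equivalence genuinely intertwines the two module structures (carrying the module $\cA$ of $\epsilon_{\coev}$ to that of $\eta_{\ev}$), rather than merely matching underlying objects of $\cS$; this is exactly where the braided $E_2$-structure, as opposed to the bare $E_1$-structure, is used.

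Assembling the surviving conditions then gives precisely dualizability of $\cA$ as an object of $\cS$, as an $\cA^e$-module, and as an $\HC(\cA)$-module, and since every implication above is a biconditional the characterization is complete. As a consistency check, the first two of these are exactly the criterion of \cref{thm:2dualizabilityE1} for 2-dualizability of the underlying $E_1$-algebra of $\cA$, so the theorem reads as saying that 3-dualizability of the $E_2$-algebra adds precisely the single further requirement of $\HC(\cA)$-dualizability.
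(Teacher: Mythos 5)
Your proposal is correct and follows essentially the same route as the paper's proof: reduce via \cref{thm:easy3dualizability} to right-adjointability of the four unit/counit 2-morphisms, pass to underlying bimodules with \cref{prop:forgetE2E1}, apply \cref{prop:E1Hom1adjoints} to read off the four module-dualizability conditions, and collapse the $\cA\bt\cA$-condition from $\epsilon_{\coev}$ onto the $\cA^e$-condition using the braiding-induced identification $\cA\bt\cA\cong\cA\bt\cA^{\mop}$. The paper dispatches that last identification in one sentence exactly as you do, so there is nothing missing.
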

\begin{proof}
We recall from \cref{thm:2dualizabilityE2} the 2-dualizability data $\cA^\vee$, $\ev$, $\coev$, $\ev^R$, $\coev^R$, together with their units and counits $\eta_{\ev}, \eta_{\coev}, \epsilon_{\ev}, \epsilon_{\coev}$.  By \cref{thm:easy3dualizability}, $\cA$ is 3-dualizable if, and only if, these last four morphisms have right adjoints.  By \cref{prop:forgetE2E1} these exist if, and only if, the underlying bimodules have right adjoints which can be analyzed using \cref{prop:E1Hom1adjoints}:
\begin{itemize}
\item $\eta_{\ev}$ has a right adjoint if, and only if, $\cA$ is dualizable as a $\cA^e$-module.
\item $\eta_{\coev}$ has a right adjoint if, and only if, $\cA$ is dualizable as a $\HC(\cA)$-module.
\item $\epsilon_{\ev}$ has a right adjoint if, and only if, $\cA$ is dualizable as an object of $\cS$.
\item $\epsilon_{\coev}$ has a right adjoint if, and only if $\cA$ is dualizable as a $\cA\bt \cA$-module via the right $\cA\bt \cA$-action on $\cA$. Using the braiding we may identify $\cA\bt \cA\cong \cA\bt \cA^{\mop}$ as monoidal categories, so that under this identification the $\cA\bt \cA$-action on $\cA$ goes to the canonical $\cA^e$-action on $\cA$.
\end{itemize}
\end{proof}

\begin{remark}
Recall that the main result of \cite{Brochier2018} was the construction of a 3-dualizable subcategory of $\BrTens$ based on the notion of cp-rigidity, showing in particular that every cp-rigid braided tensor category $\cA$ is 3-dualizable.  As a byproduct, we gave sufficient condition for dualizability of higher morphisms, not just those appearing as dualizing data for $\cA$.  On the other hand, there was no proof that cp-rigidity was necessary for 3-dualizability of a braided tensor category, just that it was sufficient.  Indeed, since cp-rigidity is not {\it a priori} a Morita invariant, we do not expect it is a necessary condition.
  
By contrast, \cref{thm:Characterize3Dualizability} gives a complete characterization of 3-dualizability, it is Morita invariant, and the characterization holds for a general $\cS$.  However, in the case $\cS=\LFP$, let us underscore that it remains an open question to characterize necessary conditions for 1-dualizability in $\LFP$ \cite[Remark 3.6]{Brandenburg2015}, let alone as $\cA^e$- and $\HC(\cA)$-modules, so that in practice one must still appeal to the method of \cite{Brochier2018} to establish the conditions in \cref{thm:Characterize3Dualizability}.  \end{remark}

\subsection{Invertibility for $E_1$- and $E_2$-algebras}

The goal of this section is to give a complete characterization of invertible objects in $\alg_1(\cS)$ and $\alg_2(\cS)$. We begin with an elementary lemma.

\begin{lemma}
Suppose $\cC$ is a bicategory and $f\colon x\rightarrow y$ a 1-morphism. It is invertible if, and only if, it is right-adjointable and the unit and counit of the adjunction are isomorphisms.
\label{lm:invertibility}
\end{lemma}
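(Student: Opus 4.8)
The plan is to read this as the classical fact that a $1$-morphism in a bicategory is an equivalence if and only if it underlies an adjoint equivalence, and to argue the two implications separately. The word ``invertible'' here means that $f$ admits a quasi-inverse, i.e. a $1$-morphism $g$ with $g\circ f\cong \id_x$ and $f\circ g\cong\id_y$, and the point of the lemma is to upgrade such data to a genuine adjunction whose unit and counit are isomorphisms.

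The backward implication requires nothing beyond unwinding definitions. If $f$ is right-adjointable with right adjoint $g\colon y\to x$, unit $\eta\colon \id_x\Rightarrow g\circ f$ and counit $\epsilon\colon f\circ g\Rightarrow \id_y$, and if both $\eta$ and $\epsilon$ are invertible $2$-morphisms, then $\eta$ witnesses $g\circ f\cong\id_x$ and $\epsilon$ witnesses $f\circ g\cong \id_y$. Hence $g$ is a quasi-inverse to $f$, so $f$ is invertible; note that the triangle identities are not needed in this direction.

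For the forward implication, suppose $f$ is invertible and choose a quasi-inverse $g\colon y\to x$ together with invertible $2$-morphisms $\eta\colon \id_x\Rightarrow g\circ f$ and $\theta\colon f\circ g\Rightarrow \id_y$. This data exhibits $f$ as an equivalence, but a priori $(\eta,\theta)$ need not satisfy the triangle identities, so it is not yet an adjunction. I would then invoke the standard promotion of an equivalence to an adjoint equivalence: holding the unit $\eta$ fixed, one replaces $\theta$ by a corrected counit $\epsilon\colon f\circ g\Rightarrow \id_y$, defined by the usual formula as a composite of $\theta$ with suitable whiskerings of $\eta$ and $\theta$, and arranged so that one of the two triangle identities holds by construction. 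Since $\epsilon$ is assembled entirely from invertible $2$-morphisms it is again invertible, and $(f,g,\eta,\epsilon)$ then presents $f$ as right-adjointable with invertible unit and counit.

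The one nonformal step, which I expect to be the main obstacle, is verifying that this corrected $\epsilon$ satisfies the \emph{second} triangle identity as well, the first holding by construction. This is a diagram chase living entirely in the $2$-cells over $f$ and $g$: one applies the interchange law together with the naturality of horizontal composition, and then cancels the auxiliary factors introduced in the correction using the invertibility of $\eta$ and $\theta$. The verification is classical---it is Mac Lane's improvement of an equivalence to an adjoint equivalence, read internally to the bicategory $\cC$---and is formal once the whiskering diagrams are set up, so the remainder of the argument is bookkeeping.
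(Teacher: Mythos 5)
Your proof is correct: it is the standard promotion of an equivalence to an adjoint equivalence (fix the unit, correct the counit to force one triangle identity, and deduce the other from invertibility), and the resulting unit and counit are isomorphisms as required. The paper states this lemma without proof, treating it as elementary, so there is no authorial argument to compare against; yours is exactly the expected one.
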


By an iterated application of this lemma, we may give a straightforward characterization of invertible objects in $\alg_n(\cS)$. We begin with a characterization of invertible 1-morphisms in $\alg_2(\cS)$.

\begin{notation}
Give $\cC\in\alg_1(\cS)$, we denote $\cC^!=\Hom_{\cC^e}(\cC,\cC^e)$, and $\cC^*=\Hom_\cS(\cC,\un_\cS)$.
\end{notation}

\begin{theorem}
Suppose $\cB\in\alg_2(\cS)$ is an $E_2$-algebra and $\cC$ an $\cB$-central algebra viewed as a 1-morphism $\cB\rightarrow \un_\cS$. Then $\cC$ is invertible if, and only if, $\cC\in\alg_1(\cS)$ is 2-dualizable and the following maps are equivalences:
\begin{enumerate}
\item The evaluation map $\Hom_{\cC^e}(\cC, \cC^e)\bt_\cB \cC\rightarrow \cC^e$.

\item The map $\cB\rightarrow \rZ_1(\cC)$ given by the $\cB$-central structure on $\cC$.

\item The evaluation map $\Hom(\cC, \un_\cS)\bt_{\cC\bt_\cB \cC^{\mop}} \cC\rightarrow \un_\cS$.

\item The map $\cC^{\mop}\bt_\cB \cC\rightarrow \Hom(\cC, \cC)$ given by the left and right action of $\cC$ on itself.
\end{enumerate}
\label{thm:E2invertiblemorphism}
\end{theorem}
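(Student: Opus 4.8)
The plan is to apply \cref{lm:invertibility} twice, peeling off one categorical level at a time. By \cref{thm:E2Hom1adjoints} (specialized to $\cA=\cB$ and target $\un_\cS$), the $1$-morphism $\cC$ always admits a right adjoint, namely $\cC^{\mop}$ equipped with its canonical central structure, and the unit and counit are the explicit bimodules recorded there: the unit $\eta$ is $\cC$ regarded as a $(\cB,\cB)$-centered $(\cB,\cC^e)$-bimodule, where $\cC^{\mop}\circ\cC\simeq\cC^e$, and the counit $\epsilon$ is $\cC$ regarded as a $(\un_\cS,\un_\cS)$-centered $(\cC^{\mop}\bt_\cB\cC,\un_\cS)$-bimodule. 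Hence \cref{lm:invertibility} reduces the invertibility of $\cC$ to the invertibility of the two $2$-morphisms $\eta$ and $\epsilon$.

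Next I would analyze each of these $2$-morphisms. By \cref{prop:forgetE2E1}, a $2$-morphism is right-adjointable in $\alg_2(\cS)$ if and only if its underlying bimodule is right-adjointable in $\alg_1(\cS)$, and the central structures on the adjoints are supplied automatically; so a second application of \cref{lm:invertibility}, now inside the relevant hom-bicategory of bimodules, shows that $\eta$ (resp.\ $\epsilon$) is invertible if and only if the underlying bimodule is right-adjointable and the unit and counit of that adjunction are isomorphisms. By \cref{prop:E1Hom1adjoints}, right-adjointability of the $(\cB,\cC^e)$-bimodule $\cC$ underlying $\eta$ amounts to $\cC$ being dualizable as a $\cC^e$-module, while right-adjointability of the left $\cC^{\mop}\bt_\cB\cC$-module $\cC$ underlying $\epsilon$ amounts to $\cC$ being dualizable as an object of $\cS$; by \cref{thm:2dualizabilityE1} these two conditions together are exactly the $2$-dualizability of $\cC$, which is the first clause of the theorem.

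It then remains to identify the four unit/counit isomorphism conditions with the four displayed maps. For $\eta$ the relevant dual is $\cC^!=\Hom_{\cC^e}(\cC,\cC^e)$, and the conditions that the evaluation and coevaluation be isomorphisms unwind to (1) the evaluation $\cC^!\bt_\cB\cC\to\cC^e$ and to the coevaluation $\cC\bt_{\cC^e}\cC^!\to\cB$; using the identification $\cC\bt_{\cC^e}\cC^!\cong\End_{\cC^e}(\cC)=\rZ_1(\cC)$ valid for a dualizable module, the latter becomes the assertion that the central structure map $\cB\to\rZ_1(\cC)$ of (2) is an equivalence. For $\epsilon$ the dual is $\cC^*=\Hom(\cC,\un_\cS)$, and the analogous conditions unwind to (3) the evaluation $\cC^*\bt_{\cC\bt_\cB\cC^{\mop}}\cC\to\un_\cS$ and to (4) the coevaluation, which under dualization is the module-structure map $\cC^{\mop}\bt_\cB\cC\to\End(\cC)$. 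I expect the categorical skeleton above to be a formal double application of \cref{lm:invertibility}, so the main obstacle is the bookkeeping in this last identification: verifying that the coevaluation for $\eta$ is genuinely the given central structure map rather than merely abstractly equivalent to it, and tracking the $\cC^{\mop}$-versus-$\cC$ ordering together with the $\mop$-versus-$\bop$ conventions and the $180$-degree rotations of \eqref{eq:mopbop} that reconcile the algebra $\cC^{\mop}\bt_\cB\cC$ appearing in $\epsilon$ with the form $\cC\bt_\cB\cC^{\mop}$ appearing in condition (3).
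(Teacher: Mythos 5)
Your proposal is correct and follows essentially the same route as the paper: use \cref{thm:E2Hom1adjoints} to produce the right adjoint $\cC^{\mop}$ with its unit $\eta$ and counit $\epsilon$, apply \cref{lm:invertibility} to reduce invertibility of $\cC$ to invertibility of $\eta$ and $\epsilon$, then apply \cref{lm:invertibility} again via \cref{prop:forgetE2E1} and \cref{prop:E1Hom1adjoints}, identifying the two right-adjointability conditions with $2$-dualizability and the four unit/counit maps with conditions (1)--(4), including the identification $\cC^!\bt_{\cC^e}\cC\cong\rZ_1(\cC)$. The only discrepancies are notational (you write the unit of the $\eta$-adjunction with reversed orientation, and describe condition (4) as arising ``under dualization'' when it is directly the unit of the $\epsilon$-adjunction), which do not affect the argument.
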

\begin{proof}
By \cref{thm:E2Hom1adjoints} $\cC$ admits a right adjoint $\cC^{\mop}\colon \un_\cS\rightarrow \cB$. The unit of the adjunction $\eta$ is $\cC$ viewed as an $(\cB, \cB)$-centered $(\cB, \cC\bt \cC^{\mop})$-bimodule. The counit of the adjunction $\epsilon$ is $\cC$ viewed as a $(\cC^{\mop}\bt_\cB \cC, \un_\cS)$-bimodule.

By \cref{lm:invertibility} the 1-morphism $\cC\colon \cB\rightarrow \un$ is invertible if, and only if, $\eta$ and $\epsilon$ are invertible. We will now analyze the invertibility of these 2-morphisms separately:
\begin{itemize}
\item By \cref{lm:invertibility} the unit $\eta$ is invertible if, and only if, it is right-adjointable with the unit and counit being isomorphisms. By \cref{prop:forgetE2E1,prop:E1Hom1adjoints} $\eta$ is right-adjointable if, and only if, $\cC$ is dualizable as a $\cC^e$-module, with dual $\cC^!$. The unit of this adjunction is $\cB\rightarrow \cC^!\bt_{\cC^e} \cC\cong \rZ_1(\cC)$. The counit of this adjunction is $\cC^!\bt_\cB \cC\rightarrow \cC^e$.

\item By \cref{lm:invertibility} the counit $\epsilon$ is invertible if, and only if, it is right-adjointable with the unit and counit being isomorphisms. By \cref{prop:E1Hom1adjoints} $\epsilon$ is right-adjointable if, and only if, $\cC$ is dualizable as an object of $\cS$, with dual $\cC^*$. The unit of this adjunction is $\cC^{\mop}\bt_\cB \cC\rightarrow \Hom(\cC,\un_\cS)\bt \cC\cong \Hom(\cC, \cC)$. The counit of this adjunction is $\cC^*\bt_{\cC\bt_\cB \cC^{\mop}} \cC\rightarrow \un_\cS$.
\end{itemize}
\end{proof}

The previous theorem recovers a well-known characterization of invertible objects in $\alg_1(\cS)$ (see \cite{MR0498794, MR1394505,  MR3258690} for related characterizations).

\begin{theorem}
An $E_1$-algebra $\cC\in \alg_1(\cS)$ is invertible if, and only if, it is 2-dualizable and the following maps are isomorphisms:
\begin{enumerate}
\item $\cC^{\mop}\bt \cC\rightarrow \Hom(\cC, \cC)$ given by the left and right action of $\cC$ on itself.

\item The inclusion of the unit $\un_\cS\rightarrow \rZ_1(\cC)$.
\end{enumerate}
\label{thm:E1invertible}
\end{theorem}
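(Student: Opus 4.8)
The plan is to deduce \cref{thm:E1invertible} from \cref{thm:E2invertiblemorphism} by specializing the base $E_2$-algebra to the unit, $\cB=\un_\cS$. The first step is to recognize that an object of $\alg_1(\cS)$ is the same datum as a $1$-morphism $\un_\cS\to\un_\cS$ in $\alg_2(\cS)$: a $\un_\cS$-central algebra is just an $E_1$-algebra $\cC$ equipped with its canonical (essentially unique) central structure $\un_\cS\to\rZ_1(\cC)$, and under the identification $\End_{\alg_2(\cS)}(\un_\cS)\simeq\alg_1(\cS)$ the composition of such $1$-endomorphisms corresponds to the tensor product $\bt$. Consequently $\cC$ is invertible as an object of $\alg_1(\cS)$ if, and only if, it is invertible as a $1$-morphism $\un_\cS\to\un_\cS$, so that \cref{thm:E2invertiblemorphism} applies with $\cB=\un_\cS$.

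Setting $\cB=\un_\cS$, the four conditions of \cref{thm:E2invertiblemorphism} simplify. Condition (2), $\cB\to\rZ_1(\cC)$, becomes the unit inclusion $\un_\cS\to\rZ_1(\cC)$, which is condition (2) of \cref{thm:E1invertible}; condition (4), $\cC^{\mop}\bt_\cB\cC\to\Hom(\cC,\cC)$, becomes $\cC^{\mop}\bt\cC\to\Hom(\cC,\cC)$, which is condition (1) of \cref{thm:E1invertible}; and the requirement that $\cC$ be $2$-dualizable is, by \cref{thm:2dualizabilityE1}, precisely dualizability of $\cC$ over $\cS$ and as a $\cC^e$-module. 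The two remaining conditions read $\cC^!\bt\cC\to\cC^e$ (condition (1)) and $\cC^*\bt_{\cC^e}\cC\to\un_\cS$ (condition (3)), and the task is to show that these are automatic once the others hold.

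The crux is the observation that, once $\cC$ is dualizable over $\cS$ and the Azumaya map $\cC^e\to\End(\cC)$ of condition (1) of \cref{thm:E1invertible} is an equivalence, the two notions of dual coincide: there is a canonical equivalence $\cC^!\simeq\cC^*$ of $(\un_\cS,\cC^e)$-bimodules. Indeed, a dualizable object $\cC$ with $\cC^e\xrightarrow{\sim}\End(\cC)$ exhibits $\cC$ as a Morita-invertible $(\cC^e,\un_\cS)$-bimodule, whose dual as a left $\cC^e$-module (namely $\cC^!$) must agree with its dual as an object of $\cS$ (namely $\cC^*$); concretely, the tautological left $\End(\cC)$-module $\cC$ satisfies $\Hom_{\End(\cC)}(\cC,\End(\cC))\simeq\cC^*$. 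Under this identification, condition (1) of \cref{thm:E2invertiblemorphism} is carried onto condition (1) of \cref{thm:E1invertible} (both assert $\cC^e\simeq\cC^*\bt\cC\simeq\End(\cC)$), while condition (3) is carried onto condition (2) (both assert $\un_\cS\simeq\cC^*\bt_{\cC^e}\cC\simeq\rZ_1(\cC)$). Hence all four conditions of \cref{thm:E2invertiblemorphism} hold as soon as the two conditions of \cref{thm:E1invertible} do; the forward implication is immediate, since an invertible object is in particular $2$-dualizable and satisfies every condition of \cref{thm:E2invertiblemorphism}.

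I expect the main obstacle to be verifying that the equivalence $\cC^!\simeq\cC^*$ is compatible with the relevant structure maps, so that conditions (1) and (3) are not merely equivalent \emph{as objects} to conditions (1) and (2) of \cref{thm:E1invertible} but are literally carried onto them. This is a mates computation: one must check that the counit $\cC^!\bt\cC\to\cC^e$ and the unit $\un_\cS\to\cC^!\bt_{\cC^e}\cC$ witnessing the $\cC^e$-module duality, transported along $\cC^!\simeq\cC^*$, coincide respectively with the evaluation $\cC^*\bt\cC\to\End(\cC)\simeq\cC^e$ and with the map to $\rZ_1(\cC)$. I would organize this using the calculus of mates (cf. \cref{rem:mates}), which should render these identifications formal once the equivalence of duals has been established.
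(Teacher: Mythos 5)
Your reduction is the same as the paper's: identify $\alg_1(\cS)$ with $\End_{\alg_2(\cS)}(\un_\cS)$ and specialize \cref{thm:E2invertiblemorphism} to $\cB=\un_\cS$, so that conditions (2) and (4) there become the two conditions of \cref{thm:E1invertible} and it remains to dispose of conditions (1) and (3). Where you diverge is in how you dispose of them. The paper does this by dualizing the \emph{maps}: the evaluation $\cC^!\bt\cC\to\cC^e$ is a morphism of dualizable right $\cC^e$-modules whose $\cC^e$-linear dual is exactly the Azumaya map $\cC^e\to\cC^*\bt\cC\cong\Hom(\cC,\cC)$, and the unit $\un_\cS\to\rZ_1(\cC)\cong\cC^!\bt_{\cC^e}\cC$ is a morphism of dualizable objects of $\cS$ whose dual is exactly $\cC^*\bt_{\cC^e}\cC\to\un_\cS$; since a map between dualizable objects is an equivalence iff its dual is, each pair stands or falls together, using only $2$-dualizability and no identification of $\cC^!$ with $\cC^*$. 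Your route instead first uses the Azumaya hypothesis to produce an equivalence $\cC^!\simeq\cC^*$ and then tries to match the maps directly. That is workable for the pair (1)/(4), but note two points. First, your justification of $\cC^!\simeq\cC^*$ via ``Morita-invertibility of $\cC$ as a $(\cC^e,\un_\cS)$-bimodule'' is dangerously close to circular, since Morita invertibility of that bimodule is precisely the conjunction of conditions (1) and (3) you are trying to prove; the non-circular version is the computation $\cC^!=\Hom_{\cC^e}(\cC,\cC^e)\simeq\Hom_{\End(\cC)}(\cC,\End(\cC))\simeq\cC^*$, which uses only the Azumaya equivalence and dualizability of $\cC$ in $\cS$. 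Second, for the pair (2)/(3) the maps $\un_\cS\to\rZ_1(\cC)$ and $\cC^*\bt_{\cC^e}\cC\to\un_\cS$ point in opposite directions, so condition (3) cannot be ``literally carried onto'' condition (2) by any equivalence of objects; the only way to relate them is the dualization-of-maps step, which is exactly the paper's argument. So the mates computation you defer to the end is not a routine compatibility check but the actual content of that half of the proof, and once you carry it out the detour through $\cC^!\simeq\cC^*$ becomes unnecessary. Your argument can be completed, but the paper's version is both shorter and logically lighter, as it never needs to assume the Azumaya condition in order to compare the duals.
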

\begin{proof}
We have an equivalence $\alg_1(\cS)\cong \Hom_{\alg_2(\cS)}(\un_\cS, \un_\cS)$ of monoidal $(\infty,3)$-categories. Hence, $\cC\in\alg_1(\cS)$ is invertible if, and only if, it is invertible when viewed as a 1-morphism $\un_\cS\rightarrow \un_\cS$ in $\alg_2(\cS)$. By \cref{thm:E2invertiblemorphism} it is equivalent to $\cC\in\alg_1(\cS)$ being 2-dualizable and satisfying the 4 conditions of the theorem. Let us analyze them in pairs:
\begin{itemize}
\item The evaluation map $\cC^!\bt \cC\rightarrow \cC^e$ is a map of right $\cC^e$-modules, where on the lefthand side $\cC^e$ acts on $\cC^!$. Since $\cC\in\cS$ is dualizable and $\cC^!$ is dualizable as a $\cC^e$-module, $\cC^!\bt\cC$ is dualizable as a $\cC^e$-module. In particular, the evaluation map is an isomorphism if, and only if, its $\cC^e$-linear dual map is an isomorphism. But the dual map is $\cC^e\rightarrow \cC^*\bt \cC\cong \Hom(\cC, \cC)$, which is the map in the fourth condition of \cref{thm:E2invertiblemorphism}.

\item In the map $\un_\cS\rightarrow \rZ_1(\cC)\cong \cC^!\bt_{\cC^e} \cC$ both sides are dualizable in $\cS$: the dual of the righthand side is
\[\Hom(\cC^!\bt_{\cC^e} \cC, \un_\cS)\cong\Hom_{\cC^e}(\cC^!, \cC^*)\cong \cC^*\bt_{\cC^e} \cC.\]
In particular, the dual of this map is the map $\cC^*\bt_{\cC^e} \cC\rightarrow \un_\cS$ in the third condition of \cref{thm:E2invertiblemorphism}.
\end{itemize}
\end{proof}

\begin{remark}\label{rem:mates}
In the preceding proof an essential role was played by the dual $1$-morphisms.  Recall that the dual of a morphism $f\colon x \rightarrow y$ between dualizable objects is the composite
\[y^\vee \xrightarrow{\id\otimes\coev} y^\vee \otimes x \otimes x^\vee \xrightarrow{\id \otimes \phi \otimes \id} y^\vee \otimes y \otimes x^\vee \xrightarrow{\ev\otimes\id} x^\vee.\]
In fact, if we view modules as 1-morphisms in the $(\infty, 3)$-category $\alg_1(\cS)$, morphisms of modules are 2-morphisms. The dual module is the adjoint 1-morphism and the dual of a morphism of modules is called a ``mate'' of the 2-morphism.
\end{remark}

\begin{example}
Let $R$ be a commutative algebra and $\cS=\Mod_R$ the symmetric monoidal category of $R$-modules. \Cref{thm:E1invertible} characterizes invertible objects in $\alg_1(\Mod_R)$ as $R$-algebras $A$ satisfying the following four conditions:
\begin{enumerate}
\item $A$ is dualizable as an $R$-module (i.e., $A$ is finitely generated and projective as an $R$-module).
\item $A$ is dualizable as an $A^e$-module (i.e., it is separable).
\item The natural morphism $A^{\op}\otimes A\rightarrow \Hom_R(A, A)$ given by the left and right action is an isomorphism.
\item The morphism $R\rightarrow \rZ(A)$ is an isomorphism (i.e., $A$ is a central $R$-algebra).
\end{enumerate}
In this case one may prove a stronger claim that some of these conditions are equivalent to each other. Concretely, invertible objects in $\alg_1(\Mod_R)$ are $R$-algebras $A$ satisfying either of the following equivalent conditions (see \cite[Theorem II.3.4]{DeMeyerIngraham}):
\begin{itemize}
\item (\defterm{Azumaya}) $A$ is a faithful dualizable $R$-module and the morphism $A^{\op}\otimes A\rightarrow \Hom_R(A, A)$ is an isomorphism.

\item (\defterm{Central separable}) $A$ is a dualizable $A^e$-module and the morphism $R\rightarrow \rZ(A)$ is an isomorphism.
\end{itemize}
\label{ex:Azumaya}
\end{example}

Finally, we come to the main result of this section:

\begin{theorem}
An $E_2$-algebra $\cA\in\alg_2(\cS)$ is invertible if, and only if, it is 3-dualizable and the following maps are isomorphisms:
\begin{enumerate}
\item (cofactorizability) $\HC(\cA)\rightarrow \Hom(\cA, \cA)$.

\item (factorizability) $\cA\bt \cA^{\bop}\rightarrow \rZ_1(\cA)$.

\item (nondegeneracy) The inclusion of the unit $\un\rightarrow \rZ_2(\cA)$.
\end{enumerate}
\label{thm:E2invertible}
\end{theorem}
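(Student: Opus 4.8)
The plan is to reduce invertibility of the \emph{object} $\cA$ to invertibility of a single $1$-morphism and then invoke \cref{thm:E2invertiblemorphism}. First I would use the higher analogue of \cref{lm:invertibility}: an object of a symmetric monoidal higher category is invertible if, and only if, it is dualizable and its evaluation is an invertible $1$-morphism. By \cref{thm:2dualizabilityE2}, $\cA$ is $2$-dualizable with dual $\cA^{\bop}$ and evaluation $\ev\colon\cA^{\bop}\bt\cA\to\un_\cS$ the regular central algebra with underlying $E_1$-algebra $\cA$. Hence $\cA$ is invertible precisely when $\ev$ is an invertible $1$-morphism, and this is exactly a $1$-morphism $\cC\colon\cB\to\un_\cS$ of the form treated in \cref{thm:E2invertiblemorphism}, with $\cB=\cA^{\bop}\bt\cA$ and $\cC=\cA$.

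Next I would apply \cref{thm:E2invertiblemorphism} and translate its four conditions. Using $\cA^{\mop}\simeq\cA^{\bop}$ from \eqref{eq:mopbop} and the symmetry of $\bt$, the base becomes $\cB=\cA^{\bop}\bt\cA\simeq\cA\bt\cA^{\mop}=\cA^e$, and both $\cA^{\mop}\bt_\cB\cA$ and $\cA\bt_\cB\cA^{\mop}$ identify with $\HC(\cA)$. Under these identifications the four maps read:
\begin{itemize}
\item condition (2), the central structure $\cA\bt\cA^{\bop}\to\rZ_1(\cA)$, is \emph{factorizability};
\item condition (4), $\HC(\cA)\to\Hom(\cA,\cA)$, is \emph{cofactorizability};
\item condition (1) is the evaluation $\cA^!\bt_{\cA^e}\cA\to\cA^e$;
\item condition (3) is the evaluation $\cA^*\bt_{\HC(\cA)}\cA\to\un_\cS$.
\end{itemize}
The crucial difference from \cref{thm:E1invertible} is that here $\cB\neq\un_\cS$, so condition (2) does \emph{not} collapse onto condition (3); instead it survives as the separate factorizability hypothesis.

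I would then eliminate conditions (1) and (3) by passing to mates, exactly as in the proof of \cref{thm:E1invertible} (see \cref{rem:mates}). Because $\cA$ is dualizable as an $\cA^e$-module, $\cA^!\bt_{\cA^e}\cA$ is a dualizable right $\cA^e$-module, so condition (1) is an isomorphism if, and only if, its $\cA^e$-linear dual is---and that dual is the map of condition (4); thus (1) is redundant given cofactorizability. Likewise, because $\cA$ is dualizable in $\cS$ and over $\HC(\cA)$, the object $\cA^*\bt_{\HC(\cA)}\cA$ is dualizable in $\cS$ with dual
\[\Hom_\cS\!\big(\cA^*\bt_{\HC(\cA)}\cA,\,\un_\cS\big)\simeq\Hom_{\HC(\cA)}(\cA,\cA)=\rZ_2(\cA),\]
so condition (3) is an isomorphism if, and only if, its dual $\un_\cS\to\rZ_2(\cA)$ is, i.e. if, and only if, $\cA$ is \emph{non-degenerate}. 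For the dualizability hypotheses: \cref{thm:E2invertiblemorphism} needs $\cA$ to be $2$-dualizable as an $E_1$-algebra, i.e. dualizable in $\cS$ and over $\cA^e$ by \cref{thm:2dualizabilityE1}, while the mate for (3) also uses dualizability over $\HC(\cA)$; by \cref{thm:3dualizabilityE2} these amount exactly to $3$-dualizability. This is consistent on both sides, since an invertible object is automatically fully dualizable, whereas $3$-dualizability is assumed in the converse. Assembling, $\cA$ is invertible if, and only if, it is $3$-dualizable, cofactorizable, factorizable and non-degenerate.

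The step I expect to be the main obstacle is the mate computation for condition (3): one must verify that $\cA^*\bt_{\HC(\cA)}\cA$ is genuinely dualizable in $\cS$---this is precisely where the extra $\HC(\cA)$-dualizability of a $3$-dualizable object enters, beyond the $2$-dualizability of the underlying $E_1$-algebra---and that its $\cS$-dual is correctly computed as $\rZ_2(\cA)=\End_{\HC(\cA)}(\cA)$ with the dual map identified with the canonical unit $\un_\cS\to\rZ_2(\cA)$. A secondary subtlety is checking that the symmetry equivalences relating $\cA^{\bop}\bt\cA$, $\cA^e$ and $\HC(\cA)$ are compatible with all four structure maps at once, so that conditions (1)--(4) specialize simultaneously to the three named conditions.
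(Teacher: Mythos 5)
Your overall strategy is the paper's: reduce invertibility of $\cA$ to invertibility of the evaluation $1$-morphism, apply \cref{thm:E2invertiblemorphism}, and then use dualizability to collapse the four conditions there to three. Your identification of conditions (2) and (4) with factorizability and cofactorizability, and your treatment of condition (3) --- applying $\Hom(-,\un_\cS)$ and using dualizability of $\cA$ over $\HC(\cA)$ to obtain $\un_\cS\to\Hom_{\HC(\cA)}(\cA,\cA)=\rZ_2(\cA)$ --- all match the paper exactly.

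The gap is in your elimination of condition (1). You assert that the $\cA^e$-linear dual of the evaluation $\cA^!\bt_{\cB}\cA\to\cA^e$ is the map of condition (4), transporting the pairing $(1)\leftrightarrow(4)$ from the proof of \cref{thm:E1invertible}. But that pairing relied on $\cB=\un_\cS$: there the dual computes to $\cC^e\to\Hom_\cS(\cC,\cC)$ precisely because the tensor product $\cC^!\bt\cC$ is taken over the unit. Here $\cB=\cA\bt\cA^{\bop}\simeq\cA^e$, and dualizing over $\cA^e$ gives
\[\Hom_{\cA^e}\bigl(\cA^!\bt_\cB\cA,\;\cA^e\bigr)\;\cong\;\Hom_\cB\bigl(\cA,\Hom_{\cA^e}(\cA^!,\cA^e)\bigr)\;\cong\;\Hom_\cB(\cA,\cA)\;=\;\rZ_1(\cA),\]
so the dual of condition (1) is the map $\cB\simeq\cA^e\to\rZ_1(\cA)$ of condition (2), not the map $\HC(\cA)\to\Hom(\cA,\cA)$ of condition (4). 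You correctly observe that the other $E_1$ pairing, $(2)\leftrightarrow(3)$, breaks because $\cB\neq\un_\cS$; the same phenomenon breaks $(1)\leftrightarrow(4)$. Were your pairing valid, then combined with the correct one it would show that factorizability and cofactorizability are formally equivalent for every $3$-dualizable $E_2$-algebra in any $\cS$, which would render much of \cref{sec:tensor} (where this equivalence is established only for finite braided tensor categories via the Drinfeld map and Shimizu's theorem) unnecessary. As written, your backward implication never establishes condition (1) from the three named hypotheses. The fix is simply to pair (1) with (2), as the paper does: condition (1) is then subsumed by factorizability, and the rest of your argument goes through unchanged.
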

\begin{proof}
Clearly, an invertible $E_2$-algebra is 3-dualizable, so by \cref{thm:3dualizabilityE2} dualizability of $\cA$ as an $\HC(\cA)$-module is necessary. From now on we make this assumption. Recall that $\cA\in\alg_2(\cS)$ is 1-dualizable with the dual given by $\cA^{\bop}$. The evaluation map $\ev$ associated with this duality is $\cA$ viewed as a $\cA\bt \cA^{\bop}$-central algebra. Therefore, $\cA\in\alg_2(\cS)$ is invertible if, and only if, $\ev\colon \cA\bt \cA^{\bop}\rightarrow \un_\cS$ is an isomorphism. By \cref{thm:E2invertiblemorphism}, this is equivalent to the 4 conditions listed there.

Since $\cA$ is assumed to be dualizable over $\cA^e$, condition (1) is equivalent to the condition (2) after taking the dual over $\cA^e$. Condition (2) is precisely the condition that $\cA\bt \cA^{\bop}\rightarrow \rZ_1(\cA)$ is an isomorphism. Since $\cA$ is dualizable over $\HC(\cA)$, condition (3) is equivalent after applying $\Hom(-, \un_\cS)$ to the condition that $\un_\cS\rightarrow \Hom_{\HC(\cA)}(\cA, \cA)= \rZ_2(\cA)$ is an isomorphism. Finally, the condition (4) is the condition that the map $\HC(\cA)\rightarrow \Hom(\cA, \cA)$ is an isomorphism.
\end{proof}

\section{Invertibility of finite braided tensor categories}
\label{sec:tensor}
In this section we show that for finite braided tensor categories (in the sense of \cite{Etingof2004}), the three conditions for invertibility given in \cref{thm:intro-main-result-general} are already mutually equivalent.

\subsection{Categorical setup}
\label{sect:tensorcategories}

We begin by recalling some categorical background. All categories we consider are $k$-linear (i.e, enriched and tensored over $k$), where $k$ is an algebraically closed field of characteristic zero, and all functors are $k$-linear functors.

We will consider the symmetric monoidal 2-category $\LFP$ of locally presentable categories, their colimit preserving functors, and their natural transformations.  By the special adjoint functor theorem, a functor between locally presentable categories is colimit preserving if, and only if, it is a left adjoint: to emphasize this, we will use the notation $\Fun^L(\cC,\cD)$ in place of $\Hom_{\LFP}(\cC,\cD)$.  The symmetric monoidal structure is given by the so-called Deligne-Kelly tensor product.  For recollections about the notion of local presentability see \cite{Brochier2018}.  The most important class of locally presentable categories for us are those which have enough compact projectives.

\begin{definition} An object $X$ of a presentable category $\cC$ is compact-projective if the functor $\Hom(X,-)\colon\cC\to\Vect$ is colimit preserving.  The category $\cC$ \defterm{has enough compact projectives} if every object of $\cC$ can be expressed as a colimit of compact-projective objects.
\end{definition}

Equivalently, a category $\cC$ has enough compact projectives if, and only if, it is equivalent to the free cocompletion $\Fun(\tilde{\cC}^{\op}, \Vect)$ of a small category $\tilde{\cC}$, i.e. if it is a presheaf category.

\begin{remark}
It is shown in \cite{Brandenburg2015} that categories with enough compact-projectives are 1-dualizable as objects of $\LFP$, and conjectured there that these are the only 1-dualizable objects.
\end{remark}

\begin{definition}
A category $\cC$ \defterm{has a compact-projective generator} if the category $\tilde{\cC}$ may be taken to have a single object, and is \defterm{finite} if the endomorphism algebra of that object is finite-dimensional.
\end{definition}

Equivalently, a category with a compact-projective generator is one which is equivalent to $A$-mod for some associative algebra $A$, and a finite category is one for which $A$ may be taken to be finite-dimensional.  The following proposition gives a characterization of these notions internally to $\LFP$.  The proof is straightforward.

\begin{proposition}\label{prop:finitefromcompact}
Suppose that $\cC\in \LFP$ has enough compact projectives.
\begin{enumerate}
\item The identity endofunctor $\id_\cC\in\End_{\LFP}(\cC)$ is a compact object if, and only if, $\cC$ admits a compact-projective generator.
\item The identity endofunctor $\id_\cC\in\End_{\LFP}(\cC)$ is a projective object if, and only if, $\cC$ is semisimple.
\item Under assumption (1) (resp, (1) and (2)), $\cC$ is finite (resp, finite semisimple) if, and only if, Hom spaces between compact objects are finite-dimensional.
\end{enumerate}
\end{proposition}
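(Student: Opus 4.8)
The plan is to identify $E:=\End_{\LFP}(\cC)$ explicitly and reduce all three statements to standard facts about the diagonal bimodule. Since $\cC$ has enough compact projectives it is the free cocompletion $\cC\simeq\Fun(\tilde{\cC}^{\op},\Vect)$ of the small $k$-linear category $\tilde{\cC}$ of compact projectives, and the universal property of free cocompletion gives
$E=\Fun^L(\cC,\cC)\simeq\Fun(\tilde{\cC},\cC)\simeq\Fun(\tilde{\cC}^{\op}\ot\tilde{\cC},\Vect)$, the category of ``$\tilde{\cC}$-bimodules''. Under this equivalence colimits are computed pointwise in $\Vect$, the identity $\id_\cC$ corresponds to the diagonal bimodule $H=\tilde{\cC}(-,-)$, and the representables $F_{p,q}$ (external products of the two Yoneda functors) form a family of compact-projective generators, $\Hom_E(F_{p,q},-)$ being evaluation at $(p,q)$ and hence exact and cocontinuous. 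The computation underlying everything is the end formula $\Hom_E(\id_\cC,M)\cong\int_{p}M(p,p)$, the space of bimodule invariants. When $\cC$ admits a single generator $G$ this specializes to the familiar picture in which $\cC$ is the category of $A$-modules for $A=\End_\cC(G)$, $E$ is the category of $A^e$-modules, $\id_\cC\leftrightarrow A$, and $\Hom_E(\id_\cC,M)\cong M^A$.

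For statement (1) I read ``$\id_\cC$ is compact'' as ``$\Hom_E(\id_\cC,-)$ preserves coproducts'', i.e. $\id_\cC$ is finitely generated in $E$; equivalently $H$ is a quotient of a finite coproduct of the generators $F_{p,q}$. Unwinding the canonical coend presentation $H=\int^{p}F_{p,p}$, which exhibits $H$ as a quotient of $\bigoplus_{p}F_{p,p}$, one sees that a finite subcoproduct already surjects onto $H$ precisely when finitely many compact projectives $p_1,\dots,p_n$ have the property that every object of $\tilde{\cC}$ is a retract of a finite sum of the $p_i$ — that is, exactly when $G=\bigoplus_i p_i$ is a compact-projective generator. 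For statement (2), $\id_\cC$ is projective iff the (always left exact) invariants functor $\int_{p}M(p,p)=\Hom_E(\id_\cC,M)$ is exact, iff $H$ is a projective $\tilde{\cC}$-bimodule. If $H$ is projective, then for every $X\in\cC$ the canonical isomorphism $X\cong H\ot_{\tilde{\cC}}X$ exhibits $X$ as a retract of a coproduct of representable (hence projective) modules, so every object of $\cC$ is projective and $\cC$ is semisimple; conversely, over the algebraically closed field $k$ of characteristic $0$ a semisimple $\tilde{\cC}$ makes $\tilde{\cC}^{\op}\ot\tilde{\cC}$ semisimple, so every bimodule, in particular $H$, is projective.

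Finally, for statement (3) I would invoke (1) to write $\cC$ as the category of $A$-modules with $A=\End_\cC(G)$, so the compact(-projective) objects are the finitely generated (projective) $A$-modules; all Hom spaces between them are finite dimensional iff $\End_\cC(G)=A$ is, i.e. iff $\cC$ is finite, and adjoining the hypothesis of (2) yields the finite semisimple case verbatim. The main obstacle is statement (2): matching exactness of the bimodule-invariants functor to projectivity of the diagonal bimodule, and then the chain ``diagonal bimodule projective $\Leftrightarrow$ every module projective $\Leftrightarrow$ $\cC$ semisimple'' in the general, non-single-object algebroid setting — the retract argument above is the key point, and the hypotheses on $k$ are exactly what make separability and semisimplicity coincide. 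A secondary point to pin down is the intended meaning of ``compact'' in (1) (preservation of coproducts versus of filtered colimits), since for an infinitely generated endomorphism algebra $A$ the diagonal bimodule $A$ is finitely generated but need not be finitely presented.
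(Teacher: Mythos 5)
The paper records no proof of this proposition (it is dismissed as ``straightforward''), so there is nothing to compare against line by line; your reduction --- identify $\End_{\LFP}(\cC)$ with the bimodule category $\Fun(\tilde{\cC}^{\op}\ot\tilde{\cC},\Vect)$, match $\id_\cC$ with the diagonal bimodule $H=\tilde{\cC}(-,-)$, and compute $\Hom(\id_\cC,M)\cong\int_p M(p,p)$ --- is certainly the intended route, and your treatment of (1), of the forward direction of (2), and of (3) is fine.

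Two points need attention. First, the converse direction of (2) as you wrote it has a genuine gap: ``$\tilde{\cC}$ semisimple implies $\tilde{\cC}^{\op}\ot\tilde{\cC}$ semisimple'' is exactly separability, and separability coincides with semisimplicity only for \emph{finite-dimensional} algebras; algebraic closure and characteristic zero do not rescue the infinite-dimensional case. Concretely, $\cC=\Mod_{k(t)}$ has every object projective, yet $\id_\cC$ corresponds to the diagonal bimodule $k(t)$ over the domain $k(t)\otimes_k k(t)$, which is a torsion module and hence not projective. So either ``semisimple'' must be read as ``every object is a direct sum of simples with $\End(S)=k$'' --- in which case $\tilde{\cC}$ is an additive envelope of pairwise orthogonal objects with scalar endomorphisms, $H\cong\bigoplus_i F_{S_i,S_i}$ is visibly a coproduct of representables, and projectivity is immediate --- or the finite-dimensionality of Hom-spaces from (3) must be taken as a standing hypothesis; your separability argument only applies once the relevant endomorphism algebras are finite-dimensional. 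Second, the worry you flag about the meaning of ``compact'' in (1) is justified and is not cosmetic: with the filtered-colimit reading (finite presentation) the ``if'' direction is false. For $A=k[x_1,x_2,\dots]$ the category $\Mod_A$ has the compact projective generator $A$, but the kernel of $A\otimes_k A\to A$ is not finitely generated, so the diagonal bimodule is finitely generated without being finitely presented and $\Hom(\id_\cC,-)$ fails to preserve filtered colimits. The statement is correct only with ``compact'' meaning ``$\Hom(\id_\cC,-)$ preserves coproducts,'' which is also what the intended application to $2$-dualizability requires (there one needs $\Hom(\id_\cC,-)$ to be cocontinuous, and for an additive $\Hom$-functor this decomposes as right exactness plus preservation of coproducts, i.e.\ conditions (2) and (1) respectively). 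With that reading your coend argument for (1) goes through.
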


\begin{remark}\label{rem:cat-2-dualizability} It follows easily from \cref{prop:finitefromcompact} that amongst categories with enough compact projectives the 2-dualizable categories are precisely the finite semisimple categories.  This also follows from \cite[Appendix A]{Bartlett2015} and \cite{Tillman1998}, because their category $\mathrm{Bim}$ (of Cauchy complete categories, bimodules, and bimodule maps) is equivalent -- via taking the free cocompletion of categories -- to the full subcategory of $\LFP$ whose objects are those with enough compact projectives. \end{remark}

We will use the term \defterm{tensor category} to mean an $E_1$-algebra in $\LFP$, and the term \defterm{braided tensor category} to mean an $E_2$-algebra in $\LFP$.  In particular, we will always assume that the underlying category in each case is locally presentable, and that the tensor product bifunctor $\cA\times\cA\rightarrow \cA$ preserves colimits in each variable, so that it defines a morphism $\cA\bt\cA\rightarrow\cA$ in $\LFP$. We introduce the notations $\Tens=\alg_1(\LFP)$ and $\BrTens=\alg_2(\LFP)$.

Let us begin by relating the Drinfeld and M\"uger centers to the more general notions introduced in \cref{sect:Enalgebras}. The following statement is proved in \cite[Proposition 7.13.8]{Etingof2015}.

\begin{proposition}
Let $\cC\in\alg_1(\LFP)$ be a tensor category. Then $\rZ_1(\cC)$ is equivalent to the Drinfeld center: the category of pairs $(x, \gamma)$, where $x\in\cC$ and $\gamma\colon (-)\otimes x\xrightarrow{\sim} x\otimes (-)$ is an associative natural isomorphism.
\label{prop:Drinfeldcenter}
\end{proposition}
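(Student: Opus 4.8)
The plan is to reduce $\rZ_1(\cC)$ to a category of bimodule endofunctors of the regular bimodule and then match those endofunctors with half-braidings. First I would unwind the definition: $\rZ_1(\cC)=\End_{\U_\cC^{E_1}}(\cC)$ with $\U_\cC^{E_1}\cong\cC\bt\cC^{\mop}$, and invoke the equivalence $\LMod_{\U_\cC^{E_1}}\cong\Mod_\cC^{E_1}$ recalled above (after \cite{Francis2013}), under which left $\cC^e$-modules become $\cC$-bimodules and the free rank-one module $\cC$ becomes the regular bimodule $\cC$. Since morphisms in $\Mod_\cC^{E_1}$ are colimit-preserving bimodule functors, this produces an equivalence $\rZ_1(\cC)\simeq\Fun^L_{\cC\mid\cC}(\cC,\cC)$ between $\rZ_1(\cC)$ and the category of colimit-preserving $\cC$-bimodule endofunctors of the regular bimodule. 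The task then becomes to show this category is equivalent, \emph{as an object of} $\LFP$, to the Drinfeld center $\cZ(\cC)$ of pairs $(x,\gamma)$.

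The key lemma is that the regular \emph{right} $\cC$-module $\cC$ is free of rank one, generated by the unit $\un$. This is the Eilenberg--Watts / module Yoneda statement: evaluation at $\un$ gives an equivalence $\Fun^L_{\cC}(\cC,\cC)\simeq\cC$, with quasi-inverse $x\mapsto x\otimes(-)$. Indeed, for a colimit-preserving right-linear $F$ one has $F(a)=F(\un\cdot a)\cong F(\un)\cdot a=F(\un)\otimes a$ naturally, so $F\cong F(\un)\otimes(-)$ canonically. This reduces the problem of understanding bimodule endofunctors to the problem of equipping the functor $x\otimes(-)$ with a compatible \emph{left} $\cC$-module structure.

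Next I would show that such a left structure is exactly a half-braiding. A left-linearity datum on $x\otimes(-)$ is a natural isomorphism $x\otimes a\otimes c\xrightarrow{\ \sim\ }a\otimes x\otimes c$, natural in $a,c$ and right-$\cC$-linear in $c$; by right-linearity it is determined by its value at $c=\un$, i.e. by a natural isomorphism $\gamma_a\colon a\otimes x\xrightarrow{\ \sim\ }x\otimes a$. The pentagon coherence for the left-module structure becomes precisely the associativity/hexagon axiom making $(x,\gamma)$ an object of $\cZ(\cC)$, and the compatibility with the right action is automatic. Morphisms of bimodule functors correspond exactly to morphisms in the center (those commuting with the half-braidings). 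Sending $F\mapsto(F(\un),\gamma_F)$ and $(x,\gamma)\mapsto(x\otimes(-),\text{left structure from }\gamma)$ then gives mutually quasi-inverse functors; since both sides are locally presentable and these functors are colimit-preserving (the composite $\rZ_1(\cC)\to\cZ(\cC)\to\cC$ is evaluation at $\un$), the equivalence lives in $\LFP$.

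The main obstacle is the careful bookkeeping of coherence in the locally presentable setting without assuming rigidity or compact generation: verifying the free-rank-one lemma natively for Deligne--Kelly module categories, and checking that the bimodule pentagon translates \emph{precisely} to the half-braiding hexagon (as opposed to a variant or its mirror). This bookkeeping can be shortened considerably by citing \cite[Proposition 7.13.8]{Etingof2015} for the identification of $\cZ(\cC)$ with $\cC$-bimodule endofunctors of $\cC$, once the equivalence $\rZ_1(\cC)\simeq\Fun^L_{\cC\mid\cC}(\cC,\cC)$ from the first step is in hand.
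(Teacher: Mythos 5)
Your argument is correct and follows essentially the same route as the paper, which does not spell out a proof but simply invokes the general identification $\rZ_1(\cC)\simeq \End_{\cC^e}(\cC)$ (via the equivalence $\LMod_{\U_\cC^{E_1}}\cong\Mod_\cC^{E_1}$ recalled in Section 2.1) and then cites \cite[Proposition 7.13.8]{Etingof2015} for the equivalence between colimit-preserving $\cC$-bimodule endofunctors of the regular bimodule and the Drinfeld center. The details you supply --- evaluation at $\un$ identifying right-module endofunctors with $x\otimes(-)$, and the left-module constraint yielding the half-braiding $\gamma$ --- are exactly the content of that cited proposition, so there is nothing to add beyond the convention bookkeeping you already flag.
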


We may also analyze the $E_2$-center of a braided tensor category.

\begin{proposition}
Let $\cA\in\alg_2(\LFP)$ be a braided tensor category. Then $\rZ_2(\cA)$ is equivalent to the M\"uger center: the full subcategory of $\cA$ consisting of objects $x\in\cA$ such that $\sigma_{y, x}\circ\sigma_{x, y}\colon x\otimes y\rightarrow x\otimes y$ is the identity for every $y\in\cA$.
\label{prop:Mugercenter}
\end{proposition}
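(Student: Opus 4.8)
The plan is to realize $\rZ_2(\cA)=\End_{\HC(\cA)}(\cA)$ as a full subcategory of the Drinfeld center $\rZ_1(\cA)=\End_{\cA^e}(\cA)$ (identified with the category of half-braidings by \cref{prop:Drinfeldcenter}), and then to compute that this subcategory is precisely the M\"uger center. The comparison functor comes from a map of enveloping algebras: a framed embedding of two disjoint disks into the bounding-framed annulus $S^1\times\RR$ produces a morphism of $E_1$-algebras $\cA^e=\cA\bt\cA^{\mop}\to\int_{S^1\times\RR}\cA=\HC(\cA)$, which algebraically is the canonical map $\cA\bt\cA^{\mop}\to\cA\bt_{\cA\bt\cA^{\bop}}\cA^{\mop}$ from the zeroth term of the bar resolution. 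The two actions on $\cA\cong\int_{\RR^2}\cA$ are compatible along this map, so restriction of scalars sends the regular $\HC(\cA)$-module to the regular $\cA$-bimodule and therefore induces a faithful functor
\[\rZ_2(\cA)=\End_{\HC(\cA)}(\cA)\longrightarrow \End_{\cA^e}(\cA)=\rZ_1(\cA).\]

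First I would recall, from the proof of \cref{prop:Drinfeldcenter}, the explicit form of this identification: an $\cA^e$-linear endomorphism $F$ of the regular bimodule is determined by the object $x=F(\un)\in\cA$ together with a half-braiding $\gamma_{-}\colon -\ot x\xrightarrow{\sim} x\ot -$ encoding its bimodule linearity. Next I would identify the image of the comparison functor. The point is that $\HC(\cA)$ differs from $\cA^e$ exactly by the relations imposed by the coend over $\cA\bt\cA^{\bop}$; geometrically these relations come from transporting one of the two disks once around the annulus $S^1\times\RR$, and the resulting monodromy is precisely the double braiding $\sigma_{y,x}\circ\sigma_{x,y}$. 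Thus an object $(x,\gamma)$ of $\rZ_1(\cA)$ lifts to an $\HC(\cA)$-linear endomorphism if, and only if, its half-braiding is compatible with this monodromy, which forces $\gamma$ to be the half-braiding $\sigma_{-,x}$ induced by the ambient braiding and, simultaneously, forces $\sigma_{x,y}\circ\sigma_{y,x}=\id$ for all $y$. In other words, the essential image consists exactly of the objects $x\in\cA$ with trivial double braiding, so that the composite $\rZ_2(\cA)\to\rZ_1(\cA)\to\cA$ forgetting all extra structure lands in the M\"uger center.

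I would conclude by upgrading this to an equivalence. For objects $x,x'$ in the M\"uger center, every morphism $x\to x'$ in $\cA$ automatically intertwines both the $\cA^e$- and the residual $\HC(\cA)$-module structures, since triviality of the double braiding makes the extra compatibility a \emph{condition} rather than additional data; hence the comparison functor is fully faithful onto the M\"uger center, giving the asserted equivalence.

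The main obstacle I anticipate is making precise, inside the relative tensor product and factorization-homology formalism, the identification ``monodromy around the annulus $=$ double braiding,'' together with the bookkeeping of the $\mop$ versus $\bop$ reflections in $\HC(\cA)=\cA\bt_{\cA\bt\cA^{\bop}}\cA^{\mop}$. Concretely this means computing $\End_{\HC(\cA)}(\cA)$ through a bar resolution of $\cA$ as an $\HC(\cA)$-module and showing that the simplicial object computing the relevant $\Hom$ has the double-braiding map as its essential face map; verifying that the comparison functor is not merely faithful but an equivalence onto the M\"uger center, with Hom-spaces preserved and the image neither larger nor smaller, is the delicate endpoint.
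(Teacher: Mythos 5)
Your proposal follows essentially the same route as the paper: both realize $\rZ_2(\cA)$ as the full subcategory of the Drinfeld center $\rZ_1(\cA)$ consisting of half-braidings that agree with the central structure coming from each tensor factor of $\cA\bt\cA^{\bop}$, which forces $\gamma_{-}$ to equal both $\sigma_{-,x}$ and $\sigma_{x,-}^{-1}$ and hence cuts out exactly the M\"uger center. The one step you flag as the main obstacle --- identifying $\End_{\cA\bt_{\cA\bt\cA^{\bop}}\cA^{\mop}}(\cA)$ with the full subcategory of $\rZ_1(\cA)$ where the half-braiding restricts to the prescribed central structure --- is not computed via a bar resolution or monodromy argument in the paper, but is instead quoted directly from \cite[Proposition 3.34]{Laugwitz2018}, so you may substitute that citation for the delicate computation you anticipate.
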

\begin{proof}
Suppose $\cB\in\alg_2(\LFP)$ and $\cA$ is a $\cB$-central algebra. The central structure boils down to the data of a tensor functor $T\colon \cB\rightarrow \cA$ together with a natural isomorphism $\tau\colon T(z)\otimes x\rightarrow x\otimes T(z)$ for every $z\in\cB$ and $x\in\cA$. By \cite[Proposition 3.34]{Laugwitz2018} $\Hom_{\cA\bt_\cB \cA^{\mop}}(\cA, \cA)$ is a full subcategory of the Drinfeld center $\rZ_1(\cA)$ consisting of objects $(x, \gamma)$, where $\gamma_{T(z)}\colon T(z)\otimes x\rightarrow x\otimes T(z)$ coincides with $\tau$ for every $z\in\cB$.

The $E_2$-center $\rZ_2(\cA)$ is given by this construction with $\cB=\cA\bt\cA^{\bop}$. The $\cB$-central structure on $\cA$ sends $z\boxtimes\un\in\cB$ to $z\in\cA$ with $\tau$ given by $\sigma_{z, x}\colon z\otimes x\rightarrow x\otimes z$ and $\un\boxtimes z\in\cB$ to $z\in\cA$ with $\tau$ given by $\sigma_{x, z}^{-1}\colon z\otimes x\rightarrow x\otimes z$. Therefore, $\rZ_2(\cA)\subset \rZ_1(\cA)$ is a full subcategory consisting of objects $(x, \gamma)$, where
\[\gamma_z = \sigma_{x, z} = \sigma_{z, x}^{-1},\]
i.e. of objects lying in the M\"uger center.
\end{proof}

Let us recall some standard rigidity and finiteness assumptions on tensor categories.

\begin{definition}
Suppose a tensor category $\cA$ has enough compact projectives. 
\begin{itemize}
\item We say $\cA$ is \defterm{cp-rigid} if all compact projective objects of $\cA$ are dualizable.

\item We say $\cA$ is \defterm{compact-rigid} if all compact objects of $\cA$ are dualizable.

\item We say $\cA$ is a \defterm{finite tensor category} if it is compact-rigid, and its underlying category is finite.

\item We say $\cA$ is \defterm{fusion} if it is a finite tensor category and the underlying category is semisimple. 

\item We say $\cA$ is a \defterm{finite braided tensor category} (resp., \defterm{braided fusion category}) if it is braided, and its underlying tensor category is a finite tensor category (resp., fusion category).   
\end{itemize}
\end{definition}

These definitions are compatible with the most standard definition of \cite{Etingof2004} in the following sense: a tensor (resp., braided tensor) category is finite in the above sense if, and only if, it is the ind-completion of a finite tensor category (resp., finite braided tensor category) in the sense of \cite{Etingof2004} and \cite{ShimizuND}.  The next proposition is proved in \cite{Brochier2018}, by verifying closure under composition.\footnote{Both parts of this proposition require characteristic zero, otherwise one needs to restrict to fusion categories and braided fusion categories of nonzero global dimension.}

\begin{proposition}
We have higher subcategories of $\Tens$ and $\BrTens$, defined as follows:
\begin{itemize}
\item Fusion categories, semisimple bimodule categories, compact-preserving cocontinuous bimodule functors, and natural transformations form a subcategory $\Fus$ of $\Tens$.
\item  Braided fusion categories, fusion categories equipped with central structures, finite semisimple bimodule categories, compact-preserving cocontinuous bimodule functors, and bimodule natural transformations form a subcategory $\BrFus$ of $\BrTens$.
\end{itemize}
\end{proposition}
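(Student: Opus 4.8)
The plan is to exhibit each collection as a genuine sub-$(\infty,n)$-category by checking that it contains all identities and is closed under every composition operation. Since the ambient categories $\Tens = \alg_1(\LFP)$ and $\BrTens = \alg_2(\LFP)$ are assembled from iterated relative tensor products together with ordinary compositions in $\LFP$, this reduces to a short list of stability statements. I would treat the $\Fus$ case first and then observe that $\BrFus$ is the same verification carried out one categorical level higher, with the extra datum of a central structure transported along functorially.

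For $\Fus \subseteq \Tens$ the identities pose no difficulty: the regular bimodule $\cA$ is a semisimple $\cA$-$\cA$-bimodule whenever $\cA$ is fusion, the identity functor is compact-preserving and cocontinuous, and identity natural transformations are automatically admitted. The substantive closure conditions are (i) that the relative Deligne--Kelly tensor product $\cM \bt_\cB \cN$ of semisimple bimodule categories over a fusion category $\cB$ is again finite semisimple, (ii) that horizontal composites $F \bt_\cB G$ and vertical composites of compact-preserving cocontinuous bimodule functors remain compact-preserving and cocontinuous, and (iii) that composites of bimodule natural transformations stay within the class. Point (iii) is immediate, since these are simply the top-level morphisms of $\LFP$; point (ii) follows because cocontinuous functors are exactly left adjoints and hence compose, compact-preservation is stable under composition, and $\bt_\cB$ of left adjoints is again a left adjoint computed compatibly with compact projectives.

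The heart of the matter, and the step I expect to be the main obstacle, is point (i): preservation of semisimplicity under $\bt_\cB$. Here I would use that in characteristic zero every fusion category $\cB$ is separable --- equivalently, has nonzero global dimension, by Etingof--Nikshych--Ostrik (cf. \cite{Etingof2015}) --- so that every semisimple $\cB$-module category is the category of modules over a separable algebra internal to $\cB$. Presenting $\cM$ and $\cN$ in this way, the relative tensor product $\cM \bt_\cB \cN$ is identified with the module category of a tensor product of separable algebras, which remains separable, whence $\cM \bt_\cB \cN$ is finite semisimple. This is precisely the step that fails outside characteristic zero, which is why the footnote restricts to categories of nonzero global dimension; the remainder of the argument is characteristic-free.

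For $\BrFus \subseteq \BrTens$ the verification has the same shape, now with $E_1$-algebra objects (fusion categories equipped with central structures) as $1$-morphisms and finite semisimple bimodule categories as $2$-morphisms. The only new ingredient is the bookkeeping of central structures: a $1$-morphism $\cA \to \cB$ carries a map $\cA \bt \cB^{\bop} \to \rZ_1(\cC)$, and one checks that under composition over the intermediate braided fusion category these maps assemble into a central structure on the composite, using the functoriality of $\rZ_1$ together with the fact, established above, that the underlying relative tensor products stay fusion. With this in hand, object-level and $1$-morphism-level closure follow from the $\Fus$ case applied to the underlying tensor categories and module categories, and the higher morphisms are handled exactly as in (ii) and (iii).
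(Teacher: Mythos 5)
Your proof is correct and follows essentially the same route as the paper, which gives no argument of its own but defers to \cite{Brochier2018} with the remark that the proposition is proved ``by verifying closure under composition'' --- precisely the structure of your verification. Your identification of the key step (semisimplicity of relative Deligne--Kelly tensor products over a fusion category, via separability in characteristic zero) also matches the paper's footnote explaining why characteristic zero, or nonzero global dimension, is required.
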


\begin{remark}
We warn the reader that although there is a $3$-category of finite tensor categories, finite bimodule categories, compact-preserving cocontinuous bimodule functors, and bimodule natural transformations which was the main object of study in \cite{Douglas2013}, we do not know of a similar $4$-category whose objects are finite braided tensor categories.  The issue is that the relative tensor product of finite tensor categories over a finite braided tensor category will not again be finite (because it will only be cp-rigid and not compact rigid).
\end{remark}

\begin{remark}\label{rem:BrTens-dualizability} It is shown in \cite{Douglas2013} and \cite{Brochier2018}, respectively, that finite tensor categories and cp-rigid tensor categories are 2-dualizable in $\Tens$.  It is shown in \cite{Brochier2018} that cp-rigid braided tensor categories are 3-dualizable in $\BrTens$.  Finally, it is shown in \cite{Douglas2013} and \cite{Brochier2018} that fusion categories are 3-dualizable in $\Tens$ and braided fusion categories are 4-dualizable in $\BrFus$.  We expect that a finite braided tensor category is 4-dualizable in $\BrTens$ if, and only if, its M\"uger center is semisimple, but we do not know a proof.
\end{remark}

\begin{remark}  Consider a non-semisimple and non-degenerate braided tensor category $\cA$, such as the category of representations for the small quantum group at a primitive $\ell$-th root of unity, where $\ell$ is odd, not divisible by the lacing number and coprime to the determinant of the Cartan matrix 
\cite{RossoQuantumGroups}, \cite[Chapter XI.6.3]{Turaev2010}, \cite{lentner_factorizable_2017}.  Note that this example is not braided fusion, and so its $4$-dualizability does not follow from \cite{Brochier2018}.  Furthermore, its underlying category of $\cA$ is not $2$-dualizable by \cref{rem:cat-2-dualizability}
However, simply because invertibility implies full dualizability, we may conclude in particular that $\cA$ is fully-dualizable.  In particular, we see that an $E_2$-algebra in $\cS$ can be $4$-dualizable even when the underlying object is not $2$-dualizable in $\cS$.
\end{remark}

We end this section with a result identifying $\HC(\cA)$ and $\rZ_1(\cA)$ as plain categories.

\begin{proposition}
Let $\cA$ be a cp-rigid braided tensor category. Then there is an equivalence of categories $\HC(\cA)\cong \rZ_1(\cA)$.
\end{proposition}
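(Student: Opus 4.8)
The plan is to realize both $\HC(\cA)$ and $\rZ_1(\cA)$ as relative tensor products over the enveloping algebra $\cA^e=\cA\bt\cA^{\mop}$ of the regular bimodule against $\cA$, and then to compare the two using the dualizability of $\cA$ that cp-rigidity provides. First I would record that cp-rigidity makes $\cA$ dualizable as an $\cA^e$-module: by \cite{Brochier2018} a cp-rigid braided tensor category is $3$-dualizable, so in particular it satisfies the $\cA^e$-dualizability condition isolated in \cref{thm:3dualizabilityE2}. Exactly as in the proof of \cref{thm:E1invertible}, Hom--tensor duality then yields a canonical equivalence
\[
\rZ_1(\cA)=\Hom_{\cA^e}(\cA,\cA)\simeq \cA^!\bt_{\cA^e}\cA,
\]
where $\cA^!=\Hom_{\cA^e}(\cA,\cA^e)$ denotes the $\cA^e$-linear dual of the regular bimodule.

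Next I would rewrite $\HC(\cA)=\cA\bt_{\cA\bt\cA^{\bop}}\cA^{\mop}$. Using the braided equivalence $\cA^{\bop}\simeq\cA^{\mop}$ of \eqref{eq:mopbop} I would replace $\cA\bt\cA^{\bop}$ by $\cA\bt\cA^{\mop}=\cA^e$, under which the two one-sided module structures go over to the regular $\cA^e$-bimodule structures, giving
\[
\HC(\cA)\simeq \cA\bt_{\cA^e}\cA .
\]
The conceptual justification is geometric: $\HC(\cA)=\U_\cA^{E_2}=\int_{S^1\times\RR}\cA$ is computed with the framing bounding the disk, which differs from the product framing by the generator of $\pi_1(SO(2))$; with the product framing the factorization homology of the annulus is the ordinary Hochschild homology $\cA\bt_{\cA^e}\cA$ of the underlying tensor category, and since a change of framing acts on factorization homology through the balancing autoequivalence, the two answers coincide as plain objects of $\cS$.

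It then remains to compare $\cA^!\bt_{\cA^e}\cA$ with $\cA\bt_{\cA^e}\cA$, that is, to control the difference between the regular bimodule $\cA$ and its $\cA^e$-dual $\cA^!$. This is the step I expect to be the main obstacle, since it is precisely where the dualizing (Nakayama) bimodule enters: for a cp-rigid tensor category $\cA^!$ is equivalent to $\cA$ twisted by the distinguished invertible object, and so differs from the regular bimodule only by an invertible $\cA$-bimodule. Because tensoring a relative tensor product by an invertible bimodule is an autoequivalence, this twist is invisible at the level of plain categories, whence $\cA^!\bt_{\cA^e}\cA\simeq\cA\bt_{\cA^e}\cA$. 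Chaining the three equivalences gives $\HC(\cA)\simeq\rZ_1(\cA)$. The delicate content of the argument is entirely concentrated in the two twists — by the balancing in the framing comparison and by the dualizing object in the duality comparison — and the proof amounts to observing that each is realized by an (auto)equivalence and hence does not alter the underlying category.
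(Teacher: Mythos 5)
Your overall skeleton --- realizing both $\HC(\cA)$ and $\rZ_1(\cA)$ as relative tensor products of twisted regular bimodules over $\cA^e$ --- is the same as the paper's, but both of the steps you yourself flag as delicate contain genuine errors, and they are exactly where the content of the proposition lives. First, the identification $\cA\bt\cA^{\bop}\simeq\cA^{e}$ does \emph{not} carry the two module structures defining $\HC(\cA)$ over to the regular ones: tracking the braiding through this identification produces the bimodule ${}_{\id}\cA_{(\id,\sigma^2)}$, the regular bimodule whose right action is twisted by the identity functor equipped with the monoidal structure $\sigma_{y,x}\circ\sigma_{x,y}$ (this is \cite[Lemma 3.9]{Ben-Zvi2018a}, which the paper's proof cites). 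That twist is trivializable as a bimodule precisely when $\cA$ admits a balancing, which a cp-rigid braided tensor category need not have; so your appeal to ``the balancing autoequivalence'' assumes structure that is not present, and $\HC(\cA)\simeq\cA\bt_{\cA^e}\cA$ (untwisted) is not established. Second, the $\cA^e$-dual $\cA^!$ of the regular bimodule is the regular bimodule twisted by the \emph{double left dual} functor $LL$, not by tensoring with the distinguished invertible object (that object only controls the quadruple dual, via Radford), and $LL$ is an outer monoidal autoequivalence in general. Your principle that a twist by an invertible bimodule is ``invisible at the level of plain categories'' is false: the invertible bimodule sits \emph{inside} the relative tensor product rather than acting on its output, and twisted traces $\cA\bt_{\cA^e}{}_{\id}\cA_{F}$ for different monoidal autoequivalences $F$ are genuinely inequivalent in general (already for $\cA=\Vect_G$ with $G$ abelian and $F$ induced by a nontrivial automorphism of $G$, the number of simple objects changes).

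The repair is not to kill the two twists separately but to match them against each other: by \cite[Proposition 8.9.3]{Etingof2015} the Drinfeld morphism gives a canonical \emph{monoidal} isomorphism $(\id,\sigma^2)\Rightarrow LL$, so the two twisted bimodules ${}_{\id}\cA_{(\id,\sigma^2)}$ and ${}_{\id}\cA_{LL}$ are isomorphic and hence have equivalent traces over $\cA^e$. This is precisely the paper's proof, and it is the one genuinely braided input into the statement; without it, neither of your two intermediate equivalences with the untwisted Hochschild category is justified, and the argument does not close.
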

\begin{proof}
Consider the monoidal equivalence $L\colon \cA\rightarrow \cA^{\mop}$ which sends every compact projective object $x$ to the left dual ${}^\vee x$. By \cite[Proposition 3.13]{Ben-Zvi2009} and \cite[Theorem 3.2.4]{Douglas2013} we may identify
\[\rZ_1(\cA) = \Hom_{\cA^e}(\cA, \cA)\cong \cA\otimes_{\cA^e} {}_{\id} \cA_{LL},\]
where ${}_{\id} \cA_{LL}$ is the $(\cA, \cA)$-bimodule $\cA$ which has a regular left $\cA$-action, but whose right $\cA$-action is given by $LL$.

Now consider the identity functor $\id\colon \cA\rightarrow \cA$ equipped with the monoidal structure $\sigma^2\colon x\otimes y\xrightarrow{\sigma_{x, y}} y\otimes x\xrightarrow{\sigma_{y, x}} x\otimes y $. By \cite[Lemma 3.9]{Ben-Zvi2018a} we may identify
\[\HC(\cA)\cong \cA\otimes_{\cA^e} {}_{\id} \cA_{(\id, \sigma^2)},\]
where ${}_{\id} \cA_{(\id, \sigma^2)}$ is the $(\cA, \cA)$-bimodule $\cA$ which has a regular left $\cA$-action, but whose right $\cA$-action is given by the monoidal functor $(\id, \sigma^2)$.

But by \cite[Proposition 8.9.3]{Etingof2015} we have a natural monoidal isomorphism $(\id, \sigma^2)\Rightarrow LL$ which identifies the two bimodules.
\end{proof}

Note that in the cp-rigid case the natural monoidal structures on $\HC(\cA)$ and $\rZ_1(\cA)$ are nevertheless different as illustrated in \cref{ex:HCvsZ}. In the symmetric fusion case the compatibility between the two tensor structures on $\HC(\cA)\cong \rZ_1(\cA)$ is studied in \cite{Wasserman2020}.

From a TFT perspective, these monoidal structures may be understood as follows: the monoidal structure on $\HC(\cA)$ is obtained by embedding annuli inside one another (see \cite[Figure 1]{Ben-Zvi2018a}), while the monoidal structure on $\rZ_1(\cA)$ comes from the embedding of the two incoming and one outgoing annuli as the boundary of the pair of pants cobordism.  From this point of view it becomes clear that the latter tensor product is braided monoidal, while the former is only monoidal in general.  

\begin{example}\label{ex:HCvsZ}
Suppose $G$ is a finite group and let $\cA = \Rep(G)$ be the category of $G$-representations. Then $\HC(\cA)\cong \rZ_1(\cA)\cong\QCoh\left(\frac{G}{G}\right)$ is the category of adjoint-equivariant quasi-coherent sheaves on $G$. The symmetric tensor structure coming from $\HC(\cA)$ corresponds to the pointwise tensor product of quasi-coherent sheaves, while the braided tensor structure coming from $\rZ_1(\cA)$ corresponds to the convolution tensor structure.
\end{example}

\subsection{The canonical coend and end}

Let $\cA$ be a cp-rigid braided tensor category and denote by $\cA^{cp}\subset \cA$ the full subcategory of compact projective objects. The tensor product functor $T\colon \cA\bt \cA\rightarrow \cA$ admits a colimit-preserving right adjoint $T^R\colon \cA\rightarrow \cA\bt \cA$ (see e.g. \cite[Section 5.3]{Brochier2018}), so that we have a coend formula
\[T^R(\un_\cA) = \int^{x\in \cA^{cp}} x^\vee\boxtimes x\in\cA\bt \cA.\]

\begin{definition}
The \defterm{canonical coend} is the object $\cF\in\cA$ defined as
\[\cF = TT^R(\un_\cA) = \int^{x\in \cA^{cp}} x^\vee\otimes x.\]
\end{definition}

We denote by $\pi_x\colon x^\vee\otimes x\rightarrow \cF$ the natural projection. The canonical coend $\cF$ admits a natural structure of a braided Hopf algebra in $\cA$ illustrated in \cref{fig:coendHopfalgebra}.  These have been studied extensively, see e.g. \cite{Lyubashenko1994, Lyubashenko1995, Bruguieres2008, ShimizuND}.

\begin{figure}[h]
\begin{center}
\begin{tikzpicture}[thick]
  \draw[-{Latex[length=2mm]}] (0, 1) -- (0, 0.4);
  \draw (0, 0.4) -- (0, 0);
  \draw[-{Latex[length=2mm]}] (0.5, 0) -- (0.5, 0.6);
  \draw (0.5, 0.6) -- (0.5, 1);
  \draw[-{Latex[length=2mm]}] (1, 1) -- (1, 0.4);
  \draw (1, 0.4) -- (1, 0);
  \draw[-{Latex[length=2mm]}] (1.5, 0) -- (1.5, 0.6);
  \draw (1.5, 0.6) -- (1.5, 2);
  \draw (0, 1) .. controls (0, 1.6) and (0.5, 1.4) .. (0.5, 2);
  \draw (0.5, 1) .. controls (0.5, 1.6) and (1, 1.4) .. (1, 2);
  \draw (1, 1) .. controls (1, 1.2) .. (0.7, 1.35);
  \draw (0.55, 1.4) -- (0.35, 1.5);
  \draw (0.2, 1.6) .. controls (0, 1.8) .. (0, 2);
  \draw (0, -0.3) node {$x^\vee$};
  \draw (0.5, -0.35) node {$x$};
  \draw (1, -0.3) node {$y^\vee$};
  \draw (1.5, -0.35) node {$y$};

  \draw[-{Latex[length=2mm]}] (3, 0.6) -- (3, 0.4);
  \draw (3, 0.4) -- (3, 0);
  \draw (3, 0.6) .. controls (3, 1.4) .. (2.5, 2);
  \draw[-{Latex[length=2mm]}] (3.5, 0) -- (3.5, 0.6);
  \draw (3.5, 0.6) .. controls (3.5, 1.4) .. (4, 2);
  \draw (3, 2) arc (180:360:0.25);
  \draw (3, -0.3) node {$x^\vee$};
  \draw (3.5, -0.35) node {$x$};
  \draw (3, 2.2) node {$x$};
  \draw (3.5, 2.25) node {$x^\vee$};

  \draw[-{Latex[length=2mm]}] (4.5, 1) -- (4.5, 0.4);
  \draw (4.5, 0.4) -- (4.5, 0);
  \draw[-{Latex[length=2mm]}] (5, 0) -- (5, 0.6);
  \draw (5, 0.6) -- (5, 1);
  \draw (4.5, 1) arc (180:0:0.25);
  \draw (4.5, -0.3) node {$x^\vee$};
  \draw (5, -0.35) node {$x$};

  \draw[-{Latex[length=2mm]}] (7, 1) -- (7, 0.4);
  \draw (7, 0.4) -- (7, 0);
  \draw[-{Latex[length=2mm]}] (7.5, 0) -- (7.5, 0.6);
  \draw (7.5, 0.6) -- (7.5, 1);
  \draw (6, 1) arc (180:360:0.25);
  \draw (6.5, 1) .. controls (6.5, 1.6) and (7, 1.4) .. (7, 2);
  \draw (7, 1) .. controls (7, 1.6) and (7.5, 1.4) .. (7.5, 2);
  \draw (7.5, 1) .. controls (7.5, 1.3) .. (7.3, 1.4);
  \draw (7.1, 1.45) -- (6.9, 1.55);
  \draw (6.75, 1.6) .. controls (6.7, 1.7) and (6, 1.9) .. (6, 1.5);
  \draw (6, 1.5) -- (6, 1);
  \draw (6.9, 2.2) node {$x^{\vee\vee}$};
  \draw (7.6, 2.2) node {$x^\vee$};
  \draw (7, -0.3) node {$x^\vee$};
  \draw (7.5, -0.35) node {$x$};
\end{tikzpicture}
\end{center}
\caption{Multiplication $m$, coproduct $\Delta$, counit $\epsilon$ and antipode $S$ on $\cF$.}
\label{fig:coendHopfalgebra}
\end{figure}
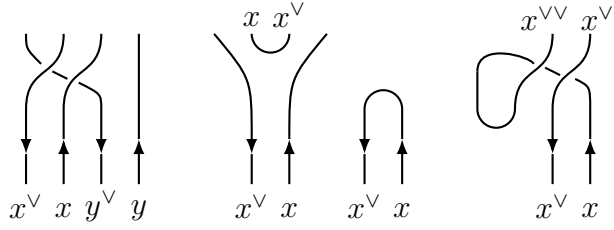

Moreover, $\cF$ is equipped with the following additional algebraic structures. There is a Hopf pairing $\omega\colon \cF\otimes \cF\rightarrow \un_\cA$ and an isomorphism $\tau_V\colon \cF\otimes V\rightarrow V\otimes \cF$ for every $V\in\cA$ illustrated in \cref{fig:Hopfpairing}. The isomorphism $\tau_V$ allows one to identify left $\cF$-modules with right $\cF$-modules, so the category $\Mod_\cF(\cA)$ inherits a monoidal structure given by the relative tensor product over $\cF$. We denote by
\[\triv_r\colon \cA\longrightarrow \Mod_\cF(\cA)\]
the functor which sends an object $V\in\cA$ to the trivial \emph{right} $\cF$-module. The following is proved in \cite[Section 4]{Ben-Zvi2018a} for compact-rigid categories, and can be extended to cp-rigid categories using \cite[Proposition 5.10]{Brochier2018}.

\begin{proposition}
We have an equivalence of monoidal categories $\HC(\cA)\cong \Mod_\cF(\cA)$. Under this equivalence the $\HC(\cA)$-module structure on $\cA$ is given by
\[M, V\mapsto M\otimes_\cF \triv_r(V).\]
\label{prop:HCREA}
\end{proposition}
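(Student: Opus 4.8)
The plan is to reduce the statement to a monadic computation, to invoke the compact-rigid case, and then to bootstrap it to cp-rigid categories. Recall from the construction of the enveloping algebra that $\HC(\cA)=\U_\cA^{E_2}\cong \cA\bt_{\cA\bt\cA^{\bop}}\cA^{\mop}$, where $\cA$ and $\cA^{\mop}$ are regarded as right and left module categories over $\cB:=\cA\bt\cA^{\bop}$ via the $E_2$-structure. First I would compute this relative tensor product by the two-sided bar construction and apply Barr--Beck--Lurie monadicity to the canonical free/forgetful adjunction between the underlying object $\cA\in\cS$ and $\HC(\cA)$; the forgetful functor exists because $\cA$ is the unit of $\Mod_\cA^{E_2}$, and the monadicity hypotheses hold because every functor in sight is colimit-preserving in $\LFP$. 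This identifies $\HC(\cA)$ with the category of modules in $\cA$ over a monad $\mathbb{T}$.

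The second step is to identify $\mathbb{T}$ with $\cF\ot(-)$. Here I would use the braiding to ``unwind'' the annular $\cB$-action: the self-gluing producing the annulus straightens the conjugation-type action into plain left multiplication, and the underlying object of the monad is $TT^R(\un_\cA)=\int^{x\in\cA^{cp}}x^\vee\ot x=\cF$, with monad multiplication given by the braided-Hopf-algebra multiplication on $\cF$ depicted in \cref{fig:coendHopfalgebra}. This yields the equivalence of plain categories $\HC(\cA)\cong\Mod_\cF(\cA)$. Crucially, the coend formula for $T^R(\un_\cA)$ is available in the cp-rigid setting by \cite[Proposition 5.10]{Brochier2018}, and this is exactly what permits the argument of \cite[Section 4]{Ben-Zvi2018a}, written for compact-rigid categories, to be run verbatim here.

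The third step is to upgrade this to a monoidal equivalence and to pin down the module structure on $\cA$. The $E_1$-structure on $\U_\cA^{E_2}$ is given geometrically by nesting one annulus inside another; under the equivalence this must be matched with the relative tensor product $\otimes_\cF$ on $\Mod_\cF(\cA)$, where a left $\cF$-module is converted into a right $\cF$-module using the half-braiding $\tau_V$. Checking that the nesting coherence data goes to the associativity and unit constraints of $\otimes_\cF$ is the part that requires the full Hopf structure on $\cF$ together with the pairing $\omega$, not merely its algebra structure. For the module structure, the action of $\HC(\cA)=\int_{S^1\times\RR}\cA$ on $\cA\cong\int_{\RR^2}\cA$ arises from the embedding of the annulus as a collar of the boundary sphere of the disk; tracing this embedding through the monadic identification sends $(M,V)$ to $M\otimes_\cF\triv_r(V)$, the object $V$ entering with the trivial right $\cF$-action because the inner disk carries no holonomy.

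The hard part will be the monoidal comparison in the third step, and, orthogonally, the verification that every density and monadicity argument carried out in \cite{Ben-Zvi2018a} for compact-rigid categories survives the weakening to cp-rigidity. What makes the latter routine is that $\cA$ is the free cocompletion of $\cA^{cp}$, so all coends and bar constructions can be evaluated on the compact-projective generators, where duals are available; \cite[Proposition 5.10]{Brochier2018} guarantees that $T^R$ remains colimit-preserving with the stated coend formula, which is the only place compact-rigidity entered the original argument.
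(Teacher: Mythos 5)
Your proposal is correct and follows essentially the same route as the paper, which in fact offers no proof of its own: it simply cites \cite[Section 4]{Ben-Zvi2018a} for the compact-rigid case and \cite[Proposition 5.10]{Brochier2018} for the extension to cp-rigid categories, exactly the two inputs your argument leans on. Your sketch is a reasonable unpacking of that cited monadic argument (identifying the bar-construction monad with $\cF\otimes(-)$, matching nested annuli with $\otimes_\cF$ via $\tau_V$, and reading off the module structure from the disk insertion), so there is nothing to add.
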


\begin{figure}[h]
\begin{center}
\begin{tikzpicture}[thick]
  \draw[-{Latex[length=2mm]}] (0, 1) -- (0, 0.4);
  \draw (0, 0.4) -- (0, 0);
  \draw (0, 1) -- (0, 2);
  \draw[-{Latex[length=2mm]}] (0.5, 0) -- (0.5, 0.6);
  \draw (0.5, 0.6) -- (0.5, 1);
  \draw[-{Latex[length=2mm]}] (1, 1) -- (1, 0.4);
  \draw (1, 0.4) -- (1, 0);
  \draw[-{Latex[length=2mm]}] (1.5, 0) -- (1.5, 0.6);
  \draw (1.5, 0.6) -- (1.5, 2);
  \draw (0.5, 1) .. controls (0.5, 1.3) and (1, 1.2) .. (1, 1.5);
  \draw (1, 1) .. controls (1, 1.1) and (0.9, 1.15) .. (0.8, 1.22);
  \draw (0.7, 1.28) .. controls (0.6, 1.35) and (0.5, 1.42) .. (0.5, 1.5);
  \draw (0.5, 1.5) .. controls (0.5, 1.8) and (1, 1.7) .. (1, 2);
  \draw (1, 1.5) .. controls (1, 1.6) and (0.9, 1.65) .. (0.8, 1.72);
  \draw (0.7, 1.78) .. controls (0.6, 1.85) and (0.5, 1.92) .. (0.5, 2);
  \draw (0, 2) arc (180:0:0.25);
  \draw (1, 2) arc (180:0:0.25);
  \draw (0, -0.3) node {$x^\vee$};
  \draw (0.5, -0.35) node {$x$};
  \draw (1, -0.3) node {$y^\vee$};
  \draw (1.5, -0.35) node {$y$};

  \draw[-{Latex[length=2mm]}] (2.5, 1.5) -- (2.5, 0.4);
  \draw (2.5, 0.4) -- (2.5, 0);
  \draw[-{Latex[length=2mm]}] (3, 0) -- (3, 0.6);
  \draw (3, 0.6) -- (3, 1);
  \draw (3.5, 0) -- (3.5, 1);
  \draw (3, 1) .. controls (3, 1.3) and (3.5, 1.2) .. (3.5, 1.5);
  \draw (3.5, 1) .. controls (3.5, 1.1) and (3.4, 1.15) .. (3.3, 1.22);
  \draw (3.2, 1.28) .. controls (3.1, 1.35) and (3, 1.22) .. (3, 1.5);
  \draw (3, 1.5) .. controls (3, 1.8) and (2.5, 1.7) .. (2.5, 2);
  \draw (2.5, 1.5) .. controls (2.5, 1.6) and (2.6, 1.65) .. (2.7, 1.72);
  \draw (2.8, 1.78) .. controls (2.9, 1.85) and (3, 1.72) .. (3, 2);
  \draw (3.5, 1.5) -- (3.5, 2);
\end{tikzpicture}
\end{center}
\caption{Hopf self-pairing $\omega$ and the isomorphism $\tau_V$.}
\label{fig:Hopfpairing}
\end{figure}
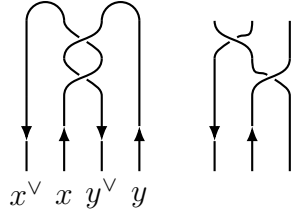

Dually, we may also consider the canonical end. Recall that $\Fun^L(\cA, \cA)$ denotes the category of colimit-preserving functors $\cA\rightarrow \cA$. Consider the tensor product functor $\tens\colon \cA\rightarrow \Fun^L(\cA, \cA)$ given by $x\mapsto x\otimes (-)$. It admits a right adjoint~\cite{Janelidze,ShimizuCoends}
\[\tens^R(F) = \int_{x\in\cA^{cp}}F(x)\otimes x^\vee.\]

\begin{definition}
The \defterm{canonical end} is the object $\cE\in\cA$ defined as
\[\cE = \tens^R(\id) = \int_{x\in \cA^{cp}} x\otimes x^\vee.\] 
\end{definition}

The object $\cE$ is naturally an algebra via the lax tensor structure on $\tens^R$.  We will use the following result.

\begin{proposition}
Let $\cA$ be a finite compact-rigid braided tensor category. Then
\[\tens^R\colon \Fun^L(\cA, \cA)\rightarrow \cA\]
is monadic and it identifies
\[\Fun^L(\cA, \cA)\cong \Mod_{\cE}(\cA).\]
\label{prop:tensormonadic}
\end{proposition}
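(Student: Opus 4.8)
The plan is to apply the Barr--Beck monadicity theorem to the adjunction $\tens\dashv\tens^R$, after identifying the resulting monad with $(-)\otimes\cE$. First I would observe that $\tens\colon(\cA,\otimes)\to(\Fun^L(\cA,\cA),\circ)$ is strong monoidal: indeed $\tens(V)\circ\tens(W)=V\otimes(W\otimes-)=(V\otimes W)\otimes-=\tens(V\otimes W)$ and $\tens(\un_\cA)=\id$. Consequently its right adjoint $\tens^R$ is lax monoidal, so $\cE=\tens^R(\id)$ acquires the algebra structure referred to just before the statement, and the monad $\tens^R\tens$ is identified \emph{as a monad} with $(-)\otimes\cE$. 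The underlying natural isomorphism is immediate from cocontinuity of $\otimes$: colimits in $\Fun^L(\cA,\cA)$ are computed pointwise, and $\otimes$ preserves colimits in each variable, so
\[\tens^R\tens(V)=\int_{x\in\cA^{cp}}(V\otimes x)\otimes x^\vee\cong V\otimes\int_{x\in\cA^{cp}}x\otimes x^\vee=V\otimes\cE,\]
while strong monoidality of $\tens$ guarantees that the unit and multiplication of the monad correspond to those of the algebra $\cE$. Hence the Eilenberg--Moore category of $\tens^R\tens$ is precisely $\Mod_\cE(\cA)$, and it remains to show that the comparison functor $\Fun^L(\cA,\cA)\to\Mod_\cE(\cA)$ is an equivalence, i.e. that $\tens^R$ is monadic.

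By the Barr--Beck(--Lurie) theorem this reduces to two properties of $\tens^R$: that it is conservative, and that it preserves the colimits (filtered colimits and reflexive coequalizers, i.e. the $\tens^R$-split geometric realizations) required by the theorem. This is exactly where the hypothesis that $\cA$ is finite and compact-rigid enters, and I expect it to be the main obstacle. For the preservation statement I would exploit rigidity to control the defining end: each $x\otimes(-)$ has both adjoints ($x^\vee\otimes(-)$ on the right and ${}^\vee x\otimes(-)$ on the left), so $x\mapsto x\otimes(-)$ is continuous and cocontinuous, and finiteness allows one to compute $\int_{x\in\cA^{cp}}(-)$ using a single compact-projective generator $P$ with finite-dimensional endomorphism algebra. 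One then shows that the end $\int_{x}F(x)\otimes x^\vee$ is a sufficiently rigid construction---either a finite limit governed by the finitely many indecomposable projectives, or, via the Nakayama/Radford isomorphism relating it to the coend $\int^{x}F(x)\otimes{}^\vee x$, an honest colimit---so that it commutes with the relevant sifted colimits. For conservativity I would pass to the concrete model $\cA\simeq A\modu$, $\Fun^L(\cA,\cA)\simeq A\bimod$ with $A$ finite-dimensional, where a $2$-morphism $\alpha\colon F\Rightarrow G$ is invertible if and only if $\alpha_x$ is invertible for all $x\in\cA^{cp}$, and check that the finiteness makes $\tens^R$ detect this.

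Assembling these verifications, the Barr--Beck theorem yields that the comparison functor is an equivalence $\Fun^L(\cA,\cA)\cong\Mod_\cE(\cA)$ under which $\tens^R$ is the forgetful functor, which is the assertion. In summary, the monad identification is formal (it is just strong monoidality of $\tens$ plus cocontinuity of $\otimes$), and the entire weight of the proof rests on verifying that $\tens^R$ satisfies the two Barr--Beck hypotheses; once the end defining $\tens^R$ is shown to behave like an absolute or finite construction under the finiteness and rigidity assumptions, both conservativity and colimit-preservation follow.
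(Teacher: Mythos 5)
Your overall strategy --- run Barr--Beck on the adjunction $\tens\dashv\tens^R$ and identify the monad with $(-)\otimes\cE$ --- is exactly the paper's, but the two steps you label as formal or merely ``expected'' are where the actual content sits. First, the displayed identification $\tens^R\tens(V)\cong V\otimes\cE$ is not ``immediate from cocontinuity of $\otimes$'': the formula $\tens^R(F)=\int_{x}F(x)\otimes x^\vee$ is an \emph{end}, i.e.\ a limit, so cocontinuity of $\otimes$ gives you nothing here. To pull $V\otimes(-)$ out of the end you need $V\otimes(-)$ to preserve that limit, which holds when $V$ is dualizable (this is where compact-rigidity enters, via the projection formula equipping $\tens^R$ with an $\cA$-module structure, as the paper says explicitly); for a general $V=\colim V_i$ you additionally need the end to commute with that colimit, which is precisely the cocontinuity of $\tens^R$ that you defer to the second half. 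So the ``formal'' step secretly depends on the hard step.

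Second, the verification of the Barr--Beck hypotheses --- conservativity and preservation of the relevant colimits --- is left at the level of ``one then shows that the end is a sufficiently rigid construction.'' This is the genuinely nontrivial input, and the paper does not prove it from scratch either: it identifies $\Fun^L(\cA,\cA)\cong\Ind\Fun^{rex}(\cA^c,\cA^c)$, observes that compact-rigidity makes $\cA^c$ an exact indecomposable module category over itself in the Etingof--Ostrik sense, and then invokes Shimizu's theorem that $\tens^R$ restricts to an exact \emph{and faithful} functor $\Fun^{rex}(\cA^c,\cA^c)\to\cA^c$; Ind-extension gives cocontinuity, and exactness plus faithfulness gives conservativity, after which the standard monadicity argument applies. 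Your suggested mechanism (a Radford/Nakayama-type isomorphism turning the end into a coend, hence a colimit) is morally why Shimizu's result holds, so the route is plausible, but as written the proposal asserts rather than establishes the one step on which the whole proof rests.
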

\begin{proof}
Let $\cA^c\subset \cA$ be the full subcategory of compact objects. Since $\cA$ is locally finitely presentable, we may identify
\[\Fun^L(\cA, \cA)\cong \Ind\Fun^{rex}(\cA^c, \cA^c),\]
where $\Fun^{rex}(-, -)$ is the category of right exact functors.

Since $\cA$ is compact-rigid, $\cA^c$ is rigid. So, $\cA^c$ is an exact $\cA^c$-module category in the sense of \cite[Definition 3.1]{Etingof2004}. Clearly, it is also indecomposable. According to~\cite[Theorem 3.4]{ShimizuCoends}, $\tens^R$ restricts to an exact and faithful functor
\[\Fun^{rex}(\cA^c, \cA^c)\rightarrow \cA^c.\]
So, $\tens^R\colon \Fun(\cA, \cA)\rightarrow \cA$ is cocontinuous.

Since $\cA$ is compact-rigid, $\tens^R$ carries an $\cA$-module structure, so that the composition $\tens^R\circ \tens$ is canonically isomorphic to the endofunctor of $\cA$ given by tensoring with $\cE$. The result then follows from the standard monadic argument (see~\cite[Section~4.1]{Ben-Zvi2018a}).
\end{proof}

\subsection{Cofactorizability and the Drinfeld map}

There is a canonical \defterm{Drinfeld map} $\Dr\colon \cF\rightarrow \cE$ shown in \cref{fig:Drinfeldmap}.

\begin{figure}[h]
\begin{center}
\begin{tikzpicture}[thick]
  \draw[-{Latex[length=2mm]}] (0, 1) -- (0, 0.4);
  \draw (0, 0.4) -- (0, 0);
  \draw[-{Latex[length=2mm]}] (1, 0) -- (1, 0.6);
  \draw (1, 0.6) -- (1, 1);
  \draw (0, 1) arc (180:160:0.5);
  \draw (1, 1) arc (0:140:0.5);
  \draw[-{Latex[length=2mm]}] (0, 1.5) -- (0, 2.1);
  \draw (0, 2.1) -- (0, 2.5);
  \draw[-{Latex[length=2mm]}] (1, 2.5) -- (1, 1.9);
  \draw (1, 1.9) -- (1, 1.5);
  \draw (0, 1.5) arc (180:320:0.5);
  \draw (1, 1.5) arc (0:-20:0.5);
  \draw (0, -0.3) node {$x^\vee$};
  \draw (1, -0.35) node {$x$};
  \draw (0, 2.8) node {$y$};
  \draw (1, 2.85) node {$y^\vee$};
\end{tikzpicture}
\end{center}
\caption{Drinfeld map $\Dr\colon\cF\rightarrow \cE$.}
\label{fig:Drinfeldmap}
\end{figure}

In this section we establish that in the finite setting cofactorizability is equivalent to invertibility of the Drinfeld map.

\begin{proposition}
Let $\cA$ be a finite compact-rigid braided tensor category. It is cofactorizable if, and only if, the Drinfeld map $\Dr\colon \cF\rightarrow \cE$ is an isomorphism.
\label{prop:Drinfeldcofactorizable}
\end{proposition}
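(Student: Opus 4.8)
The plan is to transport the cofactorizability functor through the two monadic presentations of $\HC(\cA)$ and $\Fun^L(\cA,\cA)$ already at our disposal, and to recognize it as extension of scalars along the Drinfeld map. Recall (see \cref{thm:E2invertible}) that cofactorizability asserts that the action functor $c\colon\HC(\cA)\to\End(\cA)=\Fun^L(\cA,\cA)$ is an equivalence; since $c$ is monoidal, this holds if and only if its underlying functor of plain categories is an equivalence, so I would work with plain categories throughout. Using \cref{prop:HCREA} to write the source as $\Mod_\cF(\cA)$, with $M$ acting on $\cA$ by $V\mapsto M\otimes_\cF\triv_r(V)$, and \cref{prop:tensormonadic} to write the target as $\Mod_\cE(\cA)$ via $F\mapsto\tens^R(F)=\int_{x\in\cA^{cp}}F(x)\otimes x^\vee$, the functor $c$ becomes a colimit-preserving functor $\Mod_\cF(\cA)\to\Mod_\cE(\cA)$.

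The candidate for $c$ is extension of scalars $\Dr_!=(-)\otimes_\cF\cE$ along the algebra map $\Dr\colon\cF\to\cE$. To match the two, I would first evaluate $c$ on the free modules $\free_\cF(W)=W\otimes\cF$: since $\free_\cF(W)\otimes_\cF\triv_r(V)\cong W\otimes V$, the endofunctor $c(\free_\cF(W))$ is $W\otimes(-)=\tens(W)$, which under \cref{prop:tensormonadic} is precisely the free $\cE$-module $\free_\cE(W)$, naturally in $W$. Thus $c$ agrees with $\Dr_!$ on free modules. Since every $\cF$-module is the geometric realization of its bar resolution by free modules, and both $c$ and $\Dr_!$ preserve these colimits, it remains only to check that $c$ carries the $\cF$-action maps to the $\cE$-action maps, which amounts to identifying a single comparison morphism $\cF\to\cE$ in $\cA$. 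I expect the main obstacle to be precisely this identification: a direct computation in the coend calculus, unwinding the half-braidings $\tau_x$ and the counit defining the trivial-module structures against the cups, caps and braidings defining $\Dr$ in \cref{fig:Drinfeldmap}, should show that this comparison morphism is exactly the Drinfeld map. Granting this, $c\cong\Dr_!$ as plain functors.

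Finally I would deduce that $c$ is an equivalence if and only if $\Dr$ is an isomorphism. If $\Dr$ is an isomorphism, then $\Dr_!$ has quasi-inverse $(\Dr^{-1})_!$, so $c$ is an equivalence and $\cA$ is cofactorizable. Conversely, $\Dr_!$ is left adjoint to restriction of scalars $\Dr^*$, so if $\Dr_!$ is an equivalence then the unit $\id\Rightarrow\Dr^*\Dr_!$ is a natural isomorphism; evaluating it on the free module $\free_\cF(\un)=\cF$ gives a map $\cF\to\Dr^*\Dr_!(\cF)=\Dr^*(\cE)$ whose underlying morphism in $\cA$ is exactly $\Dr$, which is therefore an isomorphism.
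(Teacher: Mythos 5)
Your proposal is correct and takes essentially the same route as the paper: both pass through the monadic presentations $\HC(\cA)\cong\Mod_\cF(\cA)$ and $\Fun^L(\cA,\cA)\cong\Mod_\cE(\cA)$ (the paper phrases your ``check on free modules, then extend by colimits'' step as a comparison of the two monads obtained by passing to right adjoints in the commuting triangle over $\cA$, which reduces everything to the value of the monad map on $\un_\cA$). The one step you defer --- unwinding $\triv_r$ and the half-braiding $\tau$ to show that the resulting map $\cF\to\cE$ is the Drinfeld map --- is precisely the computation the paper performs via the action map $\cF\otimes\triv_r(y)\to\triv_r(y)$ in \cref{fig:Rossoaction}, and you have framed it correctly.
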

\begin{proof}
Let $\free\colon \cA\rightarrow \HC(\cA)$ be the functor $x\mapsto x\otimes \cF$ sending $x\in\cA$ to the free right $\cF$-module; its right adjoint is the forgetful functor $\forget\colon \HC(\cA)\rightarrow \cA$.

Consider the commutative diagram
\[
\xymatrix{
\HC(\cA) \ar[rr] && \Hom(\cA, \cA) \\
& \cA \ar^{\free}[ul] \ar_{\tens}[ur]
}
\]

Passing to right adjoints of vertical functors by \cref{prop:HCREA} and \cref{prop:tensormonadic} we get monadic functors. Therefore, the functor $\HC(\cA)\rightarrow \Hom(\cA, \cA)$ is an equivalence if, and only if, the associated functor of monads $\forget\circ\free\Rightarrow \tens^R\circ\tens$ is an equivalence. Since both monads are given by tensor product with an algebra, it is enough to show that the value of the above functor on $\un_\cA$ is an isomorphism, i.e. that the map
\[(\forget\circ\free)(\un_\cA) = \cF\longrightarrow (\tens^R\circ\tens)(\un_\cA) = \cE\]
is an isomorphism.

For $V\in\cA$ we have a commutative diagram
\[
\xymatrix{
\Hom_{\cA}(V, \cF) \ar[r] \ar^{\sim}[d] & \Hom_{\cA}(V, \cE) \ar^{\sim}[d] \\
\Hom_{\HC(\cA)}(V\otimes \cF, \cF) \ar[r] & \Hom_{\Hom(\cA, \cA)}(V\otimes (-), \id)
}
\]
where the map at the bottom sends $f\colon V\otimes \cF\rightarrow \cF$ to the bottom map in the commutative diagram
\[
\xymatrix{
(V\otimes \cF)\otimes_{\cF} y \ar^-{f\otimes \id}[r] \ar^{\sim}[d] & \cF\otimes_{\cF} y \ar^{\sim}[d] \\
V\otimes y \ar[r] & y
}
\]
Taking $V=\cF$ equipped with the identity map $\cF\rightarrow \cF$ the induced map $\cF\otimes y\rightarrow y$ is given by treating $y\in\cA$ as a left $\cF$-module via $\triv_r$. The map $\cF\rightarrow y\otimes y^\vee$ is therefore given by the $y\otimes y^\vee$-component of the Drinfeld map $\Dr\colon \cF\rightarrow \cE$, see \cref{fig:Rossoaction}.

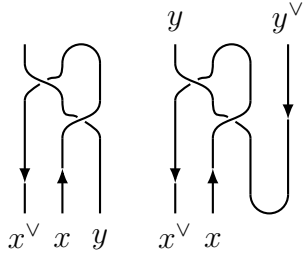
\begin{figure}[h]
\begin{center}
\begin{tikzpicture}[thick]
  \draw[-{Latex[length=2mm]}] (0, 1.5) -- (0, 0.4);
  \draw (0, 0.4) -- (0, 0);
  \draw[-{Latex[length=2mm]}] (0.5, 0) -- (0.5, 0.6);
  \draw (0.5, 0.6) -- (0.5, 1);
  \draw (1, 0) -- (1, 1);
  \draw (0.5, 1) .. controls (0.5, 1.3) and (1, 1.2) .. (1, 1.5);
  \draw (1, 1) .. controls (1, 1.1) and (0.9, 1.15) .. (0.8, 1.22);
  \draw (0.7, 1.28) .. controls (0.6, 1.35) and (0.5, 1.22) .. (0.5, 1.5);
  \draw (0.5, 1.5) .. controls (0.5, 1.8) and (0, 1.7) .. (0, 2);
  \draw (0, 1.5) .. controls (0, 1.6) and (0.1, 1.65) .. (0.2, 1.72);
  \draw (0.3, 1.78) .. controls (0.4, 1.85) and (0.5, 1.72) .. (0.5, 2);
  \draw (0, 2) -- (0, 2.25);
  \draw (1, 1.5) -- (1, 2);
  \draw (1, 2) arc (0:180:0.25);
  \draw (0, -0.3) node {$x^\vee$};
  \draw (0.5, -0.35) node {$x$};
  \draw (1, -0.35) node {$y$};

  \draw[-{Latex[length=2mm]}] (2, 1.5) -- (2, 0.4);
  \draw (2, 0.4) -- (2, 0);
  \draw[-{Latex[length=2mm]}] (2.5, 0) -- (2.5, 0.6);
  \draw (2.5, 0.6) -- (2.5, 1);
  \draw (3, 0.25) -- (3, 1);
  \draw (2.5, 1) .. controls (2.5, 1.3) and (3, 1.2) .. (3, 1.5);
  \draw (3, 1) .. controls (3, 1.1) and (2.9, 1.15) .. (2.8, 1.22);
  \draw (2.7, 1.28) .. controls (2.6, 1.35) and (2.5, 1.22) .. (2.5, 1.5);
  \draw (2.5, 1.5) .. controls (2.5, 1.8) and (2, 1.7) .. (2, 2);
  \draw (2, 1.5) .. controls (2, 1.6) and (2.1, 1.65) .. (2.2, 1.72);
  \draw (2.3, 1.78) .. controls (2.4, 1.85) and (2.5, 1.72) .. (2.5, 2);
  \draw (2, 2) -- (2, 2.25);
  \draw (3, 1.5) -- (3, 2);
  \draw (3, 2) arc (0:180:0.25);
  \draw (3, 0.25) arc (180:360:0.25);
  \draw[-{Latex[length=2mm]}] (3.5, 2.25) -- (3.5, 1.25);
  \draw (3.5, 1.25) -- (3.5, 0.25);
  \draw (2, -0.3) node {$x^\vee$};
  \draw (2.5, -0.35) node {$x$};
  \draw (2, 2.55) node {$y$};
  \draw (3.5, 2.6) node {$y^\vee$};
\end{tikzpicture}
\end{center}
\caption{The action map $\cF\otimes \triv_r(y)\rightarrow \triv_r(y)$ and the corresponding map $\cF\rightarrow y\otimes y^\vee$.}
\label{fig:Rossoaction}
\end{figure}
\end{proof}

We can now collect all results about invertibility of finite braided tensor categories in the following statement.

\begin{theorem}\label{thm:body-main-result}
Let $\cA$ be a compact-rigid finite braided tensor category. The following conditions are equivalent:
\begin{enumerate}
\item $\cA$ is invertible.

\item $\cA$ is \defterm{non-degenerate}: the natural functor $\Vect\rightarrow \rZ_2(\cA)$ is an equivalence.

\item $\cA$ is \defterm{factorizable}: the natural functor $\cA\bt \cA^{\bop}\rightarrow \rZ_1(\cA)$ is an equivalence.

\item $\cA$ is \defterm{cofactorizable}: the natural functor $\HC(\cA)\rightarrow \Hom(\cA, \cA)$ is an equivalence.

\item The Hopf pairing $\omega\colon \cF\otimes \cF\rightarrow \un_\cA$ is non-degenerate.

\item The Drinfeld map $\Dr\colon\cF\rightarrow \cE$ is an isomorphism.
\end{enumerate}
\end{theorem}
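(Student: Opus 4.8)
The plan is to use the general characterization of invertibility to reduce to conditions (2)--(4), and then to show that for finite braided tensor categories these are mutually equivalent, pulling in (5) and (6) along the way. First I would observe that $\cA$, being compact-rigid, is in particular cp-rigid, hence $3$-dualizable in $\BrTens$ by \cref{rem:BrTens-dualizability}. Consequently \cref{thm:E2invertible} applies and shows that (1) holds if, and only if, conditions (2), (3) and (4) hold simultaneously. Thus it suffices to prove that (2), (3), (4), (5) and (6) are pairwise equivalent: once this is known, the conjunction $(2)\wedge(3)\wedge(4)$ collapses to any single one of them, and the chain closes up with (1).

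Next I would assemble most of the equivalences among (2)--(6) from results already in hand. The equivalence (2) $\Leftrightarrow$ (3), non-degeneracy of the M\"uger center versus factorizability, is exactly Shimizu's \cref{thm:Shimizu}; the same analysis of the coend Hopf algebra $\cF$ identifies factorizability with non-degeneracy of the Hopf pairing, giving the crucial bridge (3) $\Leftrightarrow$ (5). On the other side, \cref{prop:Drinfeldcofactorizable} already supplies (4) $\Leftrightarrow$ (6): cofactorizability is equivalent to the Drinfeld map $\Dr\colon \cF\to\cE$ being an isomorphism. It therefore remains only to link the two packages $\{(2),(3),(5)\}$ and $\{(4),(6)\}$, for which I would establish the single remaining equivalence (5) $\Leftrightarrow$ (6).

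The heart of the argument, and the step I expect to be the main obstacle, is thus (5) $\Leftrightarrow$ (6): the Hopf pairing $\omega\colon\cF\otimes\cF\to\un_\cA$ is non-degenerate if, and only if, the Drinfeld map $\Dr\colon\cF\to\cE$ is an isomorphism. Here I would use rigidity to identify the canonical end $\cE=\int_{x}x\otimes x^\vee$ with a dual $\cF^\vee$ of the canonical coend, so that $\omega$ induces a morphism $\cF\to\cF^\vee\cong\cE$. Comparing the graphical calculus of \cref{fig:Drinfeldmap} and \cref{fig:Hopfpairing} — both built from the same double-braiding/monodromy — one checks that this induced morphism is precisely $\Dr$; since non-degeneracy of $\omega$ is by definition the assertion that $\cF\to\cF^\vee$ is an isomorphism, the equivalence follows. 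The delicate point is to make the end/coend duality and the diagrammatic comparison precise in the cp-rigid $\LFP$ setting, where $\cF$ and $\cE$ are honest objects but rigidity is available only on compact projectives; I would handle this exactly as in the proof of \cref{prop:tensormonadic}, reducing all computations to the subcategory $\cA^{cp}$.

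Finally I would record the bookkeeping. Combining (2) $\Leftrightarrow$ (3) $\Leftrightarrow$ (5), then (5) $\Leftrightarrow$ (6), and (6) $\Leftrightarrow$ (4), shows that conditions (2)--(6) are all equivalent. In particular (2), (3) and (4) hold together precisely when any one of them holds, so \cref{thm:E2invertible} yields (1) $\Leftrightarrow$ (2), and all six conditions are equivalent.
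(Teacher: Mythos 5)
Your proposal is correct and follows essentially the same route as the paper: reduce (1) to the conjunction of (2)--(4) via \cref{thm:E2invertible} (using cp-rigidity for 3-dualizability), obtain (2) $\Leftrightarrow$ (3) $\Leftrightarrow$ (5) from Shimizu, and (4) $\Leftrightarrow$ (6) from \cref{prop:Drinfeldcofactorizable}. The only divergence is that for (5) $\Leftrightarrow$ (6) the paper simply cites \cite[Proposition 4.11]{Farsad2018}, whereas you sketch the underlying (standard and correct) argument that the Drinfeld map is the partial transpose of the Hopf pairing under the identification $\cE\cong\cF^\vee$.
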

\begin{proof}
\cite[Theorem 1.1]{ShimizuND} establishes an equivalence between conditions $(2)$, $(3)$ and $(5)$. \cite[Proposition 4.11]{Farsad2018} establishes an equivalence between conditions $(5)$ and $(6)$. \Cref{prop:Drinfeldcofactorizable} establishes an equivalence between conditions $(4)$ and $(6)$.

Finally, \cref{thm:E2invertible} asserts that condition $(1)$ is equivalent to a combination of conditions $(2)$, $(3)$ and $(4)$ which finishes the proof.
\end{proof}

\section{The Picard group of $\BrTens$ and the Witt group}
\label{sec:Witt}
In this section we recall the Witt group of non-degenerate braided fusion categories \cite{MR3039775}, and identify it with the Picard group of $\BrFus$.  We then state a number of questions concerning the Picard group of $\BrTens$, which we regard as a non-fusion generalization of the Witt group.

Recall that non-degenerate braided fusion categories form a monoid under the Deligne-Kelly tensor product.  Non-degenerate braided fusion categories $\cA$ and $\cB$ are \defterm{Witt equivalent} if there exist fusion categories $\cC$ and $\cD$ with $\cA\bt \rZ_1(\cC) \simeq \cB\bt \rZ_1(\cD)$ as braided tensor categories.  That is, Witt equivalence is the equivalence relation generated by equating categories which are braided tensor equivalent, and setting Drinfeld centers to be trivial.

\begin{definition}[\cite{MR3039775}]
The \defterm{Witt group of non-degenerate braided fusion categories} is the quotient monoid of non-degenerate braided fusion categories, by Witt equivalence.  The inverse operation is $[\cA]^{-1} = [\cA^{\bop}]$, due to the factorizability property $\rZ_1(\cA)\simeq \cA\bt\cA^{\bop}$ of non-degenerate braided fusion categories.
\end{definition}


Recall that the Picard group $\Pic(\cT)$ of a symmetric monoidal $n$-category $\cT$ is the group whose elements are equivalence classes of invertible objects and whose composition is given by tensor product.  The main results of this paper give a concrete description of the elements of the Picard group of $\BrTens$ and its subcategory $\BrFus$.  In the latter case, we have:

\begin{theorem}\label{thm:WittBrFus}
The Picard group of $\BrFus$ is naturally isomorphic to the Witt group of non-degenerate braided fusion categories.
\end{theorem}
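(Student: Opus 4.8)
The plan is to exhibit the natural assignment $[\cA]\mapsto[\cA]$, sending the Witt class of a non-degenerate braided fusion category $\cA$ to its class in $\Pic(\BrFus)$, and to prove it is a well-defined group isomorphism. First I would record that the two sets of representatives agree: by \cref{thm:body-main-result} applied inside $\BrFus$ (whose objects, dualizing data, and in particular the dual $\cA^{\bop}$ all remain braided fusion by \cref{rem:BrTens-dualizability}), the invertible objects of $\BrFus$ are exactly the non-degenerate braided fusion categories, with inverse $[\cA]^{-1}=[\cA^{\bop}]$. Since both the Witt and the Picard operations are the Deligne--Kelly tensor product and both inverses are given by $\cA\mapsto\cA^{\bop}$, the assignment is automatically a homomorphism once it is well defined, and it is surjective because every invertible object is such an $\cA$. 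It remains to show that for non-degenerate braided fusion $\cA,\cB$ one has $[\cA]=[\cB]$ in $\Pic(\BrFus)$ if, and only if, $\cA$ and $\cB$ are Witt equivalent. Multiplying by the inverse class $[\cB^{\bop}]$ reduces this to the single statement that, for a non-degenerate braided fusion category $\cD$, one has $[\cD]=[\un]$ in $\Pic(\BrFus)$ if, and only if, $\cD$ is Witt equivalent to $\un$.

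Next I would analyze invertible $1$-morphisms $\cD\to\un$. Such a $1$-morphism is a $\cD$-central fusion category, i.e.\ a fusion category $\cC$ together with a braided functor $\cD\to\rZ_1(\cC)$. Applying \cref{thm:E2invertiblemorphism} with $\cB=\cD$, this $1$-morphism is invertible precisely when $\cC$ is $2$-dualizable in $\Tens$ and the four displayed maps are equivalences; the $2$-dualizability is automatic for fusion categories, and condition (2) there is exactly the requirement that $\cD\to\rZ_1(\cC)$ be an equivalence. Thus an invertible $1$-morphism $\cD\to\un$ forces $\cD\simeq\rZ_1(\cC)$ as braided fusion categories (using \cref{prop:Drinfeldcenter} to match $\rZ_1$ with the usual Drinfeld center). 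Conversely, given a fusion category $\cC$ with $\cD\simeq\rZ_1(\cC)$, I would take $\cC$ as the regular $\cD$-central algebra whose central structure is the given equivalence, and verify that conditions (1), (3) and (4) of \cref{thm:E2invertiblemorphism} hold automatically because $\cC$ is fusion, producing the desired invertible $1$-morphism. Hence $[\cD]=[\un]$ in $\Pic(\BrFus)$ if, and only if, $\cD$ is braided equivalent to the Drinfeld center of some fusion category.

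Finally, I would invoke the Davydov--M\"uger--Nikshych--Ostrik characterization \cite{MR3039775}: a non-degenerate braided fusion category is Witt equivalent to $\un$ if, and only if, it is braided equivalent to $\rZ_1(\cC)$ for some fusion category $\cC$. Combined with the previous step, this gives $[\cD]=[\un]$ in $\Pic(\BrFus)$ iff $\cD$ is Witt trivial. Unwinding the reduction, $[\cA]=[\cB]$ in $\Pic(\BrFus)$ iff $\cA\bt\cB^{\bop}$ is Witt trivial iff $\cA$ and $\cB$ are Witt equivalent, where one uses that $\cB\bt\cB^{\bop}\simeq\rZ_1(\cB)$ is itself Witt trivial so that $[\cB^{\bop}]$ is the Witt inverse of $[\cB]$. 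This proves well-definedness and injectivity, completing the identification.

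The main obstacle I anticipate lies in the second paragraph: showing that the auxiliary conditions (1), (3), (4) of \cref{thm:E2invertiblemorphism} are genuinely automatic for the regular central algebra on a fusion category $\cC$ realizing $\cD\simeq\rZ_1(\cC)$ (in particular the relative tensor product identification $\cC^{\mop}\bt_{\cD}\cC\xrightarrow{\sim}\Hom(\cC,\cC)$), and in confirming that invertibility of a $1$-morphism computed within the restricted classes of finite semisimple bimodules defining $\BrFus$ coincides with invertibility computed in $\BrTens$. Both points rest on the fact that module and bimodule categories, Drinfeld centers, and internal Homs of fusion categories remain finite semisimple, so that one never leaves $\BrFus$; granting this, the theorem follows formally from \cref{thm:body-main-result}, \cref{thm:E2invertiblemorphism}, and the DMNO theorem.
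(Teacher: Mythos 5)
Your strategy coincides with the paper's own (alternate) proof: both identify the invertible objects of $\BrFus$ with the non-degenerate braided fusion categories via \cref{thm:body-main-result}, reduce the comparison of equivalence relations to the single claim that a $\cD$-central fusion category $\cC$ witnesses $[\cD]=[\un]$ in $\Pic(\BrFus)$ if and only if the central structure $\cD\to\rZ_1(\cC)$ is an equivalence, and extract this from \cref{thm:E2invertiblemorphism}. (The paper also notes this key claim is \cite[Theorem 2.23]{Jones}.) The reduction to the unit, the use of DMNO's characterization of Witt-trivial classes as Drinfeld centers, and the surjectivity/homomorphism observations are all fine.

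However, the step you postpone to your final paragraph --- that conditions (1), (3), (4) of \cref{thm:E2invertiblemorphism} ``hold automatically because $\cC$ is fusion'' once condition (2) holds --- is exactly the mathematical content of the proof, and it is not a routine consequence of semisimplicity or finiteness; as written, your second paragraph does not close. The paper's argument is: granting (2), condition (3) becomes the triviality of $\Hom_{\cC^{\mop}\bt_{\rZ_1(\cC)}\cC}(\cC,\cC)$, which by the same computation as in \cref{prop:Mugercenter} (via \cite[Proposition 3.34]{Laugwitz2018}) is identified with the M\"uger center $\rZ_2(\rZ_1(\cC))$, and this vanishes for finite tensor categories by \cite[Proposition 4.4]{Etingof2004a}; then (2) implies (1) and (3) implies (4) by the double commutant theorem \cite[Theorem 7.12.11]{Etingof2015} (the requisite dualizability of $\cC$ over $\cA$ and over $\cC^{\mop}\bt_\cA\cC$ being supplied by \cite[Theorem 5.16]{Brochier2018}). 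Note in particular that (1) and (4) are \emph{not} independent facts about fusion categories: they are deduced from (2) and (3) respectively. You should supply some such argument, or cite \cite{Jones}, to complete the proof.
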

\begin{proof}
By \cref{thm:body-main-result}, the non-degenerate braided fusion categories are exactly the invertible objects of $\BrFus$, so it only remains to show that the a $\cA$-central fusion category $\cC$ gives an equivalence between $\cA$ and $\Vect$ if and only the natural map $\cA \rightarrow \rZ_1(\cC)$ is an equivalence.  This was already proved in \cite[Theorem 2.23]{Jones}.  

We also give an alternate proof using our techniques. Let us apply \cref{thm:E2invertiblemorphism} in the case $\cS=\LFP$ and $\cB=\cA$. Note that $\cC$ is dualizable as an $\cA$- and as a $\cC^{\mop} \boxtimes_\cA \cC$-module (e.g. by \cite[Theorem 5.16]{Brochier2018}). Then we obtain that an $\cA$-central fusion category $\cC$ gives an equivalence between $\cA$ and $\Vect$ if and only if the following four functors define equivalences of categories:
\begin{enumerate}
\item $\cC^e \rightarrow \Hom_\cA(\cC,\cC)$ given by the left and right action of $\cC$ on itself.
\item $\cA \rightarrow \rZ_1(\cC)$ given by the $\cA$-central structure on $\cC$.
\item $\Vect \rightarrow \Hom_{\cC^{\mop} \boxtimes_\cA \cC}(\cC,\cC)$ given by the inclusion of the identity.
\item $\cC^{\mop} \boxtimes_\cA \cC \rightarrow \Hom(\cC,\cC)$ given by the left and right action of $\cC$ on itself.
\end{enumerate}

Condition (2) holds by assumption.  Using (2), condition (3) reduces to the triviality of $\Hom_{\cC^{\mop} \boxtimes_{\rZ_1(\cC)} \cC}(\cC,\cC)$.  As in the proof of Proposition \ref{prop:Mugercenter}, we use \cite[Proposition 3.34]{Laugwitz2018} to rewrite $\Hom_{\cC^{\mop} \boxtimes_{\rZ_1(\cC)} \cC}(\cC,\cC)$ as the full subcategory of $\rZ_1(\cC)$ consisting of objects $(x, \gamma)$, where for every other pair $(y, \gamma')$ we have $\gamma_y = ({\gamma'}_x)^{-1}$ as maps $x \otimes y \rightarrow y \otimes x$.  Equivalently, ${\gamma'}_x \circ \gamma_y = \id_{x \otimes y}$, i.e. $(x, \gamma)$ lies in the M\"uger center.  Thus, condition (3) becomes the triviality of $\rZ_2(\rZ_1(\cC))$, which automatically holds for finite tensor categories, see \cite[Proposition 4.4]{Etingof2004a}.

We then have that (2) implies (1) and (3) implies (4) by the double commutant theorem \cite[Theorem 7.12.11]{Etingof2015}.
\end{proof}

According to \cref{thm:WittBrFus}, we may regard $\Pic(\BrTens)$ as a natural generalization of the Witt group, without the finite and semisimple assumptions.  The inclusion of $\BrFus$ into $\BrTens$ induces a group homomorphism 
\[\rho:\Pic(\BrFus)\to\Pic(\BrTens).\]  This observation leads to a number of interesting and apparently non-trivial questions.  Let us stress that we are not venturing conjectural answers to any of these questions.

\begin{question}
Is the homomorphism $\rho$ injective?  In other words, can it happen that two non-degenerate braided fusion categories are equivalent in $\BrTens$, via a central algebra which is not itself fusion, hence not a $1$-morphism in $\BrFus$?\end{question}

\begin{question} If a finite tensor category is trivial in the Witt group, must it be the center of a \emph{finite} tensor category?
\end{question}
\begin{question} Is the homomorphism $\rho$ surjective?  In other words, is every invertible braided tensor category in fact equivalent in $\BrTens$ to a non-degenerate braided fusion category?\end{question}
\begin{question}\label{q:invimplfin} Is every invertible braided tensor category equivalent in $\BrTens$ to a \emph{finite} braided tensor category?
\end{question} 

\begin{question} Is the Drinfeld center of any finite tensor category trivial in $\Pic(\BrTens)$?
We note that Drinfeld centers of infinite tensor categories are not typically invertible, let alone trivial in $\Pic(\BrTens)$.  We note also that condition (2) in \cref{thm:E2invertiblemorphism} establishes the reverse implication, that trivial elements of $\Pic(\BrTens)$ necessarily represent Drinfeld centers.  However, the argument from \cref{thm:WittBrFus} showing that centers of fusion categories are trivial does not apply, {\it a priori}.  This is because the relative tensor product of finite tensor categories over a centrally acting braided tensor category might not be finite (in particular, might not be compact-rigid) and so the double-commutant theorem does not apply.  \end{question}

Recall that the higher Picard groupoid $\hPic(\cT)$ of a symmetric monoidal $n$-category is the subgroupoid of invertible objects and invertible higher morphisms in $\cT$.  By definition, we have $\pi_0(\hPic(\cT))=\Pic(\cT)$ but it is then interesting to study higher homotopy groups.  It is known that $\pi_1$, $\pi_2$, and $\pi_3$ of $\hPic(\BrFus)$ vanish (the proof of this uses the notion of FP-dimensions), and that $\pi_4 = k^\times$.
\begin{question} What is the Postnikov k-invariant relating the $\pi_0$ and $\pi_4$ of $\hPic(\BrFus)$?
\end{question}

\begin{question} What is the homotopy type of $\hPic(\BrTens)$?
\end{question}

A symmetric tensor category may be regarded as an $E_k$-algebra in $\LFP$ for any $k\geq 3$. Moreover, as observed in \cite[Section 1.2]{Haugseng2017}, one has $\Omega\hPic(\alg_k(\LFP))\cong \hPic(\alg_{k-1}(\LFP))$. So, the collection of symmetric monoidal $\infty$-groupoids $\hPic(\alg_k(\LFP))$ forms a spectrum.

\begin{question}
What are the homotopy groups of this spectrum?
\end{question}




\bibliographystyle{style}
\bibliography{biblio}
\end{document}